\documentclass[12pt,reqno,oneside]{amsart}
\usepackage{amsmath}
\usepackage{amssymb}
\usepackage{amsthm}
\usepackage{graphics}
\usepackage{eucal}
\usepackage{mathrsfs}
\usepackage[pdftex,bookmarks,colorlinks,breaklinks]{hyperref}

\newcommand{\vertiii}[1]{{\left\vert\kern-0.25ex\left\vert\kern-0.25ex\left\vert #1
    \right\vert\kern-0.25ex\right\vert\kern-0.25ex\right\vert}}
\theoremstyle{plain}
\usepackage{geometry}\geometry{margin=1in}

\newtheorem{theorem}{Theorem}[subsection]
\newtheorem{definition}{Definition}[section]
\newtheorem{proposition}{Proposition}[subsection]
\newtheorem{lemma}{Lemma}[subsection]
\newtheorem{corollary}{Corollary}[subsection]

\newtheorem{remark}{Remark}[subsection]

\numberwithin{equation}{section}

\newcommand{\diam}{\operatorname{diam}}
\newcommand{\dist}{\operatorname{dist}}

\newcommand{\supp}{\operatorname{sup}}
\newcommand{\bbR}{\mathbb{R}}
\newcommand{\bbQ}{\mathbb{Q}}
\newcommand{\bbZ}{\mathbb{Z}}
\newcommand{\bt}{\mathbf{t}}

\newcommand{\bx}{\mathbf{x}}
\newcommand{\bU}{\mathbf{U}}
\newcommand{\bB}{\mathbf{B}}
\newcommand{\cR}{\mathcal{R}}

\newcommand{\f}{\mathbf{f}}

\newcommand{\bz}{\mathbf{z}}

\newcommand{\ba}{\mathbf{a}}

\newcommand{\Vol}{\operatorname{Vol}}

\newcommand{\bc}{\mathbf{c}}

\newcommand{\by}{\mathbf{y}}

\newcommand{\al}{\alpha}

\newcommand{\cW}{\mathcal{W}}

\newcommand{\Q}{\mathbb{Q}}
\newcommand{\Z}{\mathbb{Z}}
\newcommand{\R}{\mathbb{R}}
\newcommand{\N}{\mathbb{N}}
\newcommand{\bT}{\mathbf{T}}
\newcommand{\bV}{\mathbf{V}}
\newcommand{\cA}{\mathcal{A}}
\newcommand{\bS}{\mathbf{S}}

\newcommand{\fnu}{f_\nu^{(1)},f_\nu^{(2)},\dots,f_\nu^{(n)}}
\newcommand{\well}{\cW_{\Psi,\Theta}^{\f}}
\newcommand{\bW}{\mathbf{W}}

\newcommand{\Wf}{\bW_\f(\ba,\Psi,\Theta)}
\newcommand{\Wfl}{\bW_{\f}^{\text{large}}(\ba,\Psi,\Theta)}
\newcommand{\Wfsnu}{\bW_{\nu,\f}^{\text{small}}(\ba,\Psi,\Theta)}
\newcommand{\welllarge}{\limsup_{\ba\to\infty}\Wfl}
\newcommand{\wellsmallnu}{\limsup_{\ba\to\infty}\Wfsnu}
\newcommand{\inv}{^{\text{-}1}}
\newcommand{\overbar}[1]{\mkern 1.5mu\overline{\mkern-1.5mu#1\mkern-1.5mu}\mkern 1.5mu}
\setcounter{tocdepth}{1}

\begin{document}
\title[Inhomogeneous Diophantine approximation]{$S$-arithmetic  Inhomogeneous Diophantine approximation on manifolds}

\begin{abstract}
We prove $S$-arithmetic inhomogeneous Khintchine type theorems on analytic nondegenerate manifolds. The divergence case, which constitutes the main substance of this paper, is proved in the general context of Hausdorff measures using ubiquitous systems. For $S$ consisting of more than one prime, finite or infinite, the divergence results are new even in the homogeneous setting. We also prove the convergence case of the theorem, including in particular, the $S$-arithmetic inhomogeneous counterpart of the Baker-Sprind\v{z}uk conjectures using nondivergence estimates for flows on homogeneous spaces in conjunction with the transference principle of Beresnevich and Velani.  
\end{abstract}
\author{Shreyasi Datta}
\author{Anish Ghosh}

\address{School of Mathematics, Tata Institute of Fundamental Research, Mumbai, 400005, India}
\email{shreya@math.tifr.res.in, ghosh@math.tifr.res.in}
\thanks{AG was supported by a Government of India, Department of Science and Technology, Swarnajayanti fellowship DST/SJF/MSA-01/2016-17, a CEFIPRA grant, a MATRICS grant and a UGC grant. This work was supported by a grant from the Infosys foundation. The authors  acknowledge support of the Department of Atomic Energy, Government of India, under project $12-R\&D-TFR-5.01-0500$.}
\maketitle
\tableofcontents
\section{Introduction}




The context of this paper is $p$-adic Diophantine approximation, or more generally, $S$-arithmetic Diophantine approximation for a finite set of primes $S$. Let $\Psi : \bbR^n \to \bbR_{+} $ be a function satisfying
\begin{equation}\label{defmultapp}
\Psi(a_1, \dots, a_n) \geq \Psi(b_1, \dots, b_n) \text{ if } |a_i| \leq |b_i| \text{ for all } i = 1,\dots, n.
\end{equation}
Such a function is referred to as a \emph{multivariable approximating function}.
We follow the notation of Kleinbock and Tomanov \cite{KT} in setting up the basics of $S$-arithmetic Diophantine approximation.  Given a finite set of primes $S$ of cardinality $l$ we set $\Q_S := \prod_{\nu \in S}\Q_\nu$, denote by $|~|_S$ the $S$-adic absolute value, 
$$|x|_S = \max_{v \in S }|x^{(v)}|_v$$ and set $$\Vert\cdot\Vert_S=\max_{v\in S}\Vert \cdot\Vert_v,$$ to be the supremum norm in $\Q_S^n$. Sometimes we will drop the suffixes of norms if it is clear from the context. For $\ba = (a_1, \dots, a_n) \in \bbZ^n$ and $a_0 \in \bbZ$ we set $ \widetilde{\ba} := (a_0, a_1, \dots, a_n).$ We wish to consider an $S$-arithmetic version of Khintchine's theorem, the foundational result in metric Diophantine approximation. It turns out that the definition of $\Psi$-approximable vectors in the $S$-arithmetic setting depends on whether or not $S$ contains the infinite place, we refer the reader to the discussion in \S 11 of \cite{KT} for the reason, which has to do with the interplay between Archimedean and ultrametric norms.  We first consider the case that $\infty \in S$. 
For a function $\Psi$ as in (\ref{defmultapp}) we say that $\by \in \Q^{n}_S$ is $\Psi$-approximable ($\by \in \cW_{n}(S, \Psi)$) if there are infinitely many solutions $\ba \in \Z^n$ to
\begin{equation}\label{def:with} 
|a_0 + \ba\cdot \by|_{S}^{l}  \leq \Psi(\ba). 
\end{equation}

For the case $\infty \notin S$, we consider a function $\Psi :  \bbR^{n+1} \to \bbR_{+} $ satisfying (\ref{defmultapp}), i.e. $\Psi$ is a multivariable approximating function in $n+1$ variables. We say that $\by \in \Q^{n}_S$ is $\Psi$-approximable if there are infinitely many solutions $\ba \in \Z^n$ to
\begin{equation}\label{def:without} 
|a_0 + \ba\cdot \by|_{S}^{l}  \leq \Psi(\widetilde{\ba}).  
\end{equation}

\noindent We fix Haar measure on $\Q_p$, normalized to give $\Z_p$ measure $1$ and denote the product measure on $\Q_S$ by $|~|_S$. Then, as $S$-arithmetic analogue of Khintchine's theorem in Diophantine approximation can be proved using the methods in \cite{Lutz}. In this paper, we are concerned with $S$-arithmetic analogues of the theorem and its inhomogeneous counterparts, in the setting of manifolds. We will study both $\infty \in S$ and $\infty \notin S$ cases in this paper, and so we will adopt the convention for the rest of the paper that the approximating function $\Psi$ will be understood to be a function of $n$ or $n+1$ variables according as  $\infty \in S$ or $\infty \notin S$.                          

One can similarly setup the corresponding inhomogeneous problem. Here we follow \cite{BaBeVe} where the notation below was introduced. For a multivariable approximating function $\Psi$ and a function $\Theta: \bbQ^{n}_S \to \bbQ_S$,  we say that a vector  $\bx  \in  \bbQ_S^n $ is $(\Psi,\Theta)$-approximable if there exist infinitely many  $(\ba, a_0)\in\bbZ^n\setminus\{0\}\times \bbZ $ such that  
 \begin{equation}
 |a_0 + \ba\cdot \bx +\Theta(\bx)|_{S}^l\leq  \left\{
\begin{array}{rl} 
\Psi(\widetilde{\ba}) & \text{ if } \infty \notin S\\ 
\Psi(\ba) & \text{ if } \infty \in S.
\end{array} \right.
\end{equation}

The convergence case of Khintchine's theorem in this setting again follows from the Borel Cantelli lemma. The divergence Theorem for a slightly more restrictive class of functions, as explicated below, is a special case of the results in the present paper.\\  

In fact our results are much more general; we establish a complete Khintchine type theory, both homogeneous and inhomogeneous, for nondegenerate manifolds in the $S$-arithmetic setting. The main results proved in this paper are:
\begin{enumerate}
\item We prove the divergence Khintchine theorem for nondegenerate manifolds in the setting of Hausdorff measures, both in the homogeneous as well as inhomogeneous contexts. We would like to emphasize that our results are new even in the homogeneous setting. Previously, the most general result was established in \cite{MoS2}, where the homogeneous divergence theorem was proved for $S$ comprising a single prime. The cases where $S$ contains several finite primes and where $S$ contains several finite primes and the infinite prime are both treated here for the first time and need significant new ideas in order to apply the ubiquity framework. These results, namely Theorems \ref{thm:divergence} and \ref{thm:divergence_infty} below constitute the main results in this paper. The possibility of proving the homogeneous divergence Khintchine theorem for $S$ comprising more than one prime was raised by Mohammadi and Salehi-Golsefidy in \cite{MoS2}, \S 9.
\item We prove the convergence Khintchine theorem for nondegenerate manifolds in the inhomogeneous setting. This result, Theorem \ref{thm:main} below is a result of combining the homogeneous theorem in \cite{MoS1} with a suitable adaptation of the transference principles in \cite{BaBeVe}. The question of proving inhomogeneous Khintchine theorems for convergence and divergence was raised by Badziahin, Beresnevich and Velani in \cite{BaBeVe}.
\item As mentioned above, our divergence Theorems \ref{thm:divergence} and \ref{thm:divergence_infty} are, as far as we are aware, new even in the classical setting, namely for $\bbQ^{n}_S$. Applying the mass transference principle of Beresnevich and Velani gives one side (lower bounds) of an $S$-arithmetic Jarnik-Besicovitch theorem. We will pursue an $S$-arithmetic Hausdorff measure and dimension theory systematically elsewhere.

\end{enumerate}

Before stating our main results, we briefly review the state of the art in Diophantine approximation on manifolds, the $S$-arithmetic theory, and inhomogeneous Diophantine approximation. 
 
 \subsection{Diophantine approximation on manifolds}
 
 In the theory of Diophantine approximation on manifolds, one studies the inheritance of generic (for Lebesgue measure) Diophantine properties by proper submanifolds of $\R^n$. 
 Motivated by problems in transcendental number theory, K. Mahler conjectured in 1932 that almost every point on the curve $(x, x^2, \dots, x^n)$ is not very well approximable. 
 This conjecture was resolved by V. G. Sprind\v{z}uk \cite{Sp, Sp3} who in turn conjectured that almost every point on an analytic nondegenerate (loosely speaking, not locally contained in an affine subspace) manifold is not very well approximable. This conjecture, in a more general, multiplicative form, was resolved by D. Kleinbock and G. Margulis in \cite{KM} following earlier work by several authors, we refer the reader to the book \cite{BerDod} for a comprehensive historical account as well as the relevant definitions. 
Subsequent to the work of Kleinbock and Margulis, there were rapid advances in the theory of dual approximation on manifolds. In \cite{BKM} (and independently in \cite{Ber1}) the convergence case of the Khintchine-Groshev theorem for nondegenerate manifolds  was proved and in \cite{BBKM}, the complementary divergence case was established. In \cite{Kleinbock-extremal}, analogues of the Baker-Sprind\v{z}uk conjectures were established for affine subspaces. Khintchine type theorems were established in \cite{BBDD, G1, G-Monat, G3, G4} and also \cite{GG-quant}), we refer the reader to \cite{G-handbook} for a survey of results. 

As for the $p$-adic theory, the classical theory was initiated by Lutz \cite{Lutz} and studied by several authors including Mahler \cite{Mahler}, see \cite{Haynes} for recent results in this direction. Sprind\v{z}uk \cite{Sp} himself established the $p$-adic and function field (i.e. positive characteristic) versions of Mahler's conjectures. Subsequently, there were several partial results (cf. \cite{Kov, BK}) culminating in the work of Kleinbock and Tomanov \cite{KT} where the $S$-adic case of the Baker-Sprind\v{z}uk conjectures were settled  in full generality. The convergence case of Khintchine's theorem for nondegenerate manifolds in the $S$-adic setting was established by Golsefidy and Mohammadi \cite{MoS1}; they proved the divergence case for $\Q_p$ in \cite{MoS2}. Recently, in \cite{DG1, DG2} the authors have answered questions of Kleinbock, in particular establshing the $p$-adic Baker-Sprind\v{z}uk conjectures for affine subspaces. There has been considerable activity in the $p$-adic theory of Diophantine approximation for manifolds, see for instance \cite{Ber2, BBD, BBK, BDY, BeK, BZ, U1, U2, Zelu}. In particular, homogeneous and inhomogeneous versions of Khintchine's theorem have been proved for polynomial curves in various settings in the above cited papers.

 In \cite{G}, the second named author established the function field analogue of the Baker-Sprind\v{z}uk conjectures, and in \cite{GG-inhom} the inhomogeneous version was proved (see also \cite{GG-scand}).

In the case of inhomogeneous Diophantine approximation on manifolds, following several partial results (cf. \cite{Bu} and the references in \cite{BeVe, BeVe2}), an inhomogeneous transference principle was developed by Beresnevich and Velani in an important work, using which they resolved the inhomogeneous analogue of the Baker-Sprind\v{z}uk conjectures. Subsequently, Badziahin, Beresnevich and Velani \cite{BaBeVe} established the convergence and divergence cases of the inhomogeneous Khintchine theorem for nondegenerate manifolds. They proved a new result even in the classical setting by allowing the inhomogeneous term to vary. The divergence theorem is established in the same paper in the more general setting of Hausdorff measures. Recently, in \cite{BGGV}, the inhomogeneous analogue of Khintchine's theorem for affine subspaces was established in both convergence and divergence cases, see also \cite{GM, CGGMS}.\\

 \subsection{Main Results}
 To state our main results, we introduce some notation following \cite{MoS1}, recall some of the assumptions from that paper and set forth one further standing assumption. The assumptions are as follows.
 
 \begin{enumerate}
                 
                   \item[(I1)]  We will consider the domain to be of the form $\bU=\prod_{\nu\in S} \bU_{\nu}  $ where $\bU_\nu\subset\Q_\nu^{d_\nu}  $ is an open box. Here, the norm is taken to be the Euclidean norm at the infinite place and the $L^{\infty}$ norm at finite places.  
                   \item[(I2)] We will consider functions $\f(\bx)  =(\f_\nu(x_\nu)) _{\nu\in S}$, for  $\bx=(x_\nu) \in\bU $ where 
                   $\f_\nu=(f_\nu^{(1)},f_\nu^{(2)},\dots,f_\nu^{(n)}): 
                  \bU_\nu\to \Q_\nu^n $ is an analytic map for any 
                   $\nu\in S $, and can be analytically extended to the boundary of $ \bU_\nu$. 
                   \item[(I3)] 
                   \subitem[(I3-a)] We assume that the restrictions of $1 ,\fnu $ to any open subset of $\bU_\nu $ are linearly independent over $\Q_\nu $. 
                   \subitem[(I3-b)] We assume that $\Vert\f(\bx)\Vert_S\leq 1,\|\nabla\f_\nu(x_\nu)\| \leq 1$ and $|\Phi_\beta \f_\nu(y_1,y_2,y_3)| \leq \frac{1}{2} $  for any $\nu \in S,$ second difference 
                   quotient $\Phi_\beta$ and $x_\nu,y_1,y_2,y_3 \in U_\nu$. Definitions are provided later in the paper. 
                  \item[(I4)]\label{monotone_cond} We assume that the function $\Psi :\bbZ^n \to \bbR_{+ } $ is monotone decreasing component wise
                   i.e. $$\Psi(a_1,\cdots,a_i,\cdots, a_n)\geq \Psi(a_1,\cdots, a'_{i},\cdots, a_n)$$ whenever $|a_i|\leq |a'_i| $.
                   \item[(I5)] We assume that $\Theta(\bx)=(\Theta_\nu(x_\nu)) $ where $\Theta :\bU \mapsto \Q_S $ is also analytic and can be extended analytically to the boundary of $\bU_\nu$. We will assume $\vert\Theta(\bx)\vert_S\leq 1,\|\nabla\Theta_\nu(x_\nu)\| \leq 1$ and $|\Phi_\beta \Theta_\nu(y_1,y_2,y_3)| \leq \frac{1}{2} $ for any $\nu \in S,$ second difference 
                   quotient $\Phi_\beta$ and $x_\nu,y_1,y_2,y_3 \in U_\nu$.

                    \end{enumerate}

We will denote by $\mathcal{H}^{s}(X) $ the $s$-dimensional Hausdorff measure of a subset $X$ of $\bbQ^{n}_{S}$, where $s > 0$ is a real number.
  \noindent  We can now state the  main Theorems of the present paper.               
 
  \begin{theorem}\label{thm:divergence}
 		 Assume that $\infty \notin S$. Suppose $\f:\bU\subset\prod\Q_\nu^{m_\nu}\to \Q_S^n$ satisfies (I2) and (I3) and $\bU=(\bU_\nu)$ is an open subset $\prod\Q_\nu^{m_\nu}.$ Let 
		\begin{equation}\label{def:newpsi}
		\Psi(\ba):= \psi(\|\ba\|_\infty), \ba\in\Z^{n+1} 
		\end{equation}
		 be an approximating function and assume that $s > \sum m_\nu -l=m-l$. Let $\Theta:\bU\to \Q_S$ be an analytic map satisfying (I5).  Then 
	\begin{equation}
 	\mathcal{H}^s(\mathcal{W}^\f_{(\Psi,\Theta)}\cap\bU)=\mathcal{H}^s(\bU) \text{    if   } \sum_{\ba \in \bbZ^{n+1} \backslash \{0\}} (\Psi(\ba))^{\frac{s+l-m}{l}}=\infty.  
 	\end{equation}
 	\end{theorem}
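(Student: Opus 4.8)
The strategy is to follow the template of Badziahin, Beresnevich and Velani \cite{BaBeVe}, transplanted to $\Q_p$: realise the target set as a $\limsup$ set on the parameter space $\bU\subset\Q_p^m$, establish the Hausdorff statement first in the \emph{homogeneous} case $\Theta\equiv 0$ by combining the known $p$-adic Lebesgue divergence theorem with a mass transference principle, and finally restore the inhomogeneous shift by an inhomogeneous transference principle. Precisely, for $(\ba,a_0)\in(\Z^n\setminus\{0\})\times\Z$ put $F_{\ba,a_0}(\bx):=a_0+\ba\cdot\f(\bx)+\Theta(\bx)$ and $\Delta(\ba,a_0):=\{\bx\in\bU:|F_{\ba,a_0}(\bx)|_p\le\Psi(\widetilde\ba)\}$; then $\mathcal{W}^\f_{(\Psi,\Theta)}\cap\bU=\limsup\Delta(\ba,a_0)$, a $\limsup$ of $p$-adic neighbourhoods of the analytic hypersurfaces $\{F_{\ba,a_0}=0\}$, the homogeneous version being obtained by deleting the $\Theta$ term.

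First I would treat the homogeneous case, i.e.\ $\mathcal{H}^s(\mathcal{W}^\f_{\Psi}\cap\bU)=\mathcal{H}^s(\bU)$ for $m-1<s\le m$ whenever the series diverges. For $s=m$ this is the $p$-adic Khintchine--Groshev divergence theorem for nondegenerate manifolds of Mohammadi and Golsefidy \cite{MoS2}; more importantly, its proof --- via quantitative nondivergence of the associated diagonal orbits on the space of unimodular $\Q_p$-lattices and the ensuing equidistribution --- in fact yields the stronger \emph{ubiquity} of the resonant hypersurfaces $\{a_0+\ba\cdot\f(\bx)=0\}$ near $\f(\bU)$ at the natural rate. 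Feeding this ubiquity into a mass transference principle for $\limsup$ sets generated by shrinking neighbourhoods of codimension-one resonant sets --- applied over $\Q_p^m$, after locally straightening the hypersurfaces into graphs over $\Q_p^{m-1}$ (legitimate by nondegeneracy of $\f$ and the $p$-adic implicit function theorem, Haar measure on $\Q_p^m$ being Ahlfors regular) --- promotes the full-Haar-measure statement to the full-$\mathcal{H}^s$-measure statement for every $s\in(m-1,m]$, the exponent $s+1-m$ appearing exactly as the Hausdorff exponent attached to codimension-one resonant sets.

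It remains to remove the restriction $\Theta\equiv 0$. For this I would invoke the inhomogeneous transference principle of Beresnevich and Velani, verifying its two structural hypotheses. The ``intersection'' hypothesis --- that the homogeneous $\limsup$ set has full $\mathcal{H}^s$-measure locally on every subball of $\bU$ --- is precisely the homogeneous statement just established, which is local in nature. The geometric ``contraction'' hypothesis --- that each $\Delta(\ba,a_0)$ is, up to bounded distortion, a homogeneous set at a comparable scale --- follows from assumption (I5): since $\Theta$ is analytic with $\|\Theta\|\le1$, $\|\nabla\Theta\|\le1$ and bounded second difference quotients, the perturbation by $\Theta$ changes neither the $p$-adic size of $\nabla F_{\ba,a_0}$ (hence the width of the neighbourhood) nor the nondegeneracy of the defining family, up to absolute constants. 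This is exactly the point at which analyticity of $\Theta$ --- rather than the $C^2$ regularity that sufficed in \cite{BaBeVe} --- is genuinely used, dovetailing with the analyticity hypothesis (I2) on $\f$ required by the $p$-adic nondivergence estimates. The transference principle then delivers $\mathcal{H}^s(\mathcal{W}^\f_{(\Psi,\Theta)}\cap\bU)=\mathcal{H}^s(\bU)$ under the divergence hypothesis, which is the theorem.

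The main obstacle I anticipate is the homogeneous ubiquity step, not the soft transference and mass-transference machinery: one must extract from \cite{MoS2}, or reprove, a ubiquity estimate with rates uniform both in the height $\|\widetilde\ba\|$ and in position on $\bU$, strong enough to be ``complete'' in the sense demanded by the mass transference principle, and in particular one must control the ranges of $\ba$ along which $\nabla(\ba\cdot\f)$ degenerates on part of $\bU$, which nondegeneracy handles only delicately and where the restriction $S=\{p\}$ enters. A secondary but genuinely subtle point is the Borel--Cantelli bookkeeping in the $p$-adic normalisation: one must track how the single-place absolute value, the admissible range of $a_0$ relative to $\|\ba\|$, and the chosen Haar normalisation interact, so that the count of resonant hypersurfaces near $\bU$ of height $\asymp k$, weighted by their widths, reproduces precisely the series $\sum_{\ba}\Psi(\ba)^{s+1-m}$ in the statement, in accordance with the conventions of \cite{MoS1, MoS2}.
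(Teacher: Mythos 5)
Your proposal diverges from the paper's argument in a way that creates a genuine gap. The inhomogeneous transference principle of Beresnevich--Velani, quoted in Section 2 of the paper and used for the convergence half, is a one-directional \emph{measure-zero} transfer: it asserts $\mu(\Lambda_H(\psi))=0$ for all $\psi$ implies $\mu(\Lambda_I(\psi))=0$ for all $\psi$. It cannot be turned around to promote a homogeneous full-$\mathcal{H}^s$-measure statement to the inhomogeneous one. You have also misdescribed its hypotheses: the ``intersection property'' is not a local full-measure condition but the set-theoretic containment $I_{\bt}(\alpha,\psi_\bt)\cap I_{\bt}(\alpha',\psi_\bt)\subset H_{\bt}(\psi_\bt^*)$, and the ``contraction property'' is a measure-shrinking estimate of the form $\mu(5B\cap I_{\bt}(\alpha,\psi_\bt))\le k_\bt\,\mu(5B)$, both tailored to convergence. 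In \cite{BaBeVe} itself the divergence case is \emph{not} obtained by transference from the homogeneous case; it is obtained by building a ubiquitous system whose resonant sets already carry the inhomogeneous shift, and the present paper follows the same route.

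Concretely, the paper takes $\mathcal{F}_n$ to consist of linear combinations $F(\bx)=a_0+\ba\cdot\f(\bx)$ with integer coefficients, and the resonant sets are (subsets of) the level sets $\widetilde R_F=\{\bx:(F+\Theta)(\bx)=0\}$ --- note $\Theta$ appears \emph{inside} the resonant set from the outset. Ubiquity of this system relative to $\rho(r)=\kappa_1 r^{-(n+1)}$ with common dimension $m-1$ is the content of Theorem \ref{ubiquity}, and then Lemma \ref{ubi} (the BDV ubiquity-to-Hausdorff lemma, playing the role you allot to a mass transference principle) delivers the Hausdorff statement. The key step you are missing is how one actually produces, near a generic $\bx$, an integer vector $\ba$ for which $|(F+\Theta)(\bx)|_p$ is small and $|\partial_1(F+\Theta)|_p$ is bounded below. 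In the paper this is done as follows: niceness (Theorem \ref{lemma:nice}, extracted from \cite{MoS2} via nondivergence) gives a large set of $\bx$ for which the lattice $\Gamma_\bx$ has first minimum $\lambda_1>1$; Minkowski's second theorem then supplies $n+1$ linearly independent homogeneous forms $F_0,\dots,F_n$ small at $\bx$ with controlled archimedean and $p$-adic heights; the system \eqref{linear}, whose right-hand side involves $\Theta(\bx)$ and $\partial_1\Theta(\bx)$, is solved over $\Q_p$ for $(\eta_0,\dots,\eta_n)$; and finally the strong approximation theorem (Lemma \ref{Strong}) replaces each $\eta_j$ by a rational $r_j$ simultaneously $p$-adically close to $\eta_j$, archimedean of controlled size, and integral at all other primes, so that $F=\sum r_j F_j$ has integer coefficients. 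This interplay between geometry of numbers at the real place and strong approximation to rationalize the $\Q_p$-solution is the heart of the argument and has no counterpart in a ``homogeneous ubiquity plus inhomogeneous transference'' scheme. If you wish to salvage a transference route you would need a genuine divergence-side transference principle valid for Hausdorff measures, which is neither stated in the paper nor known in this generality; otherwise you must, as the paper does, prove inhomogeneous ubiquity directly.
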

 When $S$ contains $\infty$, we have the following Theorem which is weaker than the previous Theorem  \ref{thm:divergence} where $S$ does not contain $\infty$.
  \begin{theorem}\label{thm:divergence_infty}
 	Assume that $\infty \in S$. Suppose $\f:\bU\subset\prod\Q_\nu^{m_\nu}\to \Q_S^n$ satisfies (I2) and (I3) and $\bU=(\bU_\nu)$ is an open subset $\prod\Q_\nu^{m_\nu}.$ Let 
 	\begin{equation}\label{def:newpsi}
 	\Psi(\ba) := \psi(\|\ba\|_\infty), \ba\in\Z^{n} 
 	\end{equation}
 	be an approximating function and assume that $s > \sum m_\nu -l=m-l$. Let $\Theta:\bU\to \Q_S$ be an analytic map satisfying (I5). Let us denote
 	 \begin{equation}
 	\widetilde {\mathcal{W}^\f_{(\Psi,\Theta)}}=\left\{\bx\in\bU~\left|~\begin{aligned} \exists~ \text{ infinitely many } \ba\in\Z^{n}\setminus 0, a_0\in \Z \text{ s.t }\\
 	\vert a_0+\ba\cdot\f^\nu(\bx_\nu)+\Theta^\nu(\bx_\nu)\vert_\nu^l\leq \Psi(\ba)~\forall \nu\in S\setminus\infty\\
 	\vert a_0+\ba\cdot\f^\infty(\bx_\infty)+\Theta^\infty(\bx_\infty)\vert_\infty^l\leq \Psi(\ba)\cdot \Vert\ba\Vert_\infty ^l
 	\end{aligned}\right\}.
\right. 	\end{equation} Then 
 	\begin{equation}
 	\mathcal{H}^s(\widetilde{\mathcal{W}^\f_{(\Psi,\Theta)}}\cap\bU)=\mathcal{H}^s(\bU) \text{    if   } \sum_{\ba \in \bbZ^{n} \backslash \{0\}} (\Psi(\ba))^{\frac{s+l-m}{l}}=\infty.  
 	\end{equation}
 \end{theorem}
\noindent Note that $\mathcal{W}^\f_{(\Psi,\Theta)}\subset\widetilde {\mathcal{W}^\f_{(\Psi,\Theta)}}$, hence this above theorem is weaker than Theorem \ref{thm:divergence}.
	
	
\noindent Our next Theorem is the convergence $S$-adic Khintchine type theorem.
 \begin{theorem}\label{thm:main}
Assume $\infty \in S$ and let $\bU$ as in (I1). Suppose $\f$ satisfies (I2) and (I3), that $\Psi$ satisfies (I4) and $\Theta$ satisfies (I5). Then 
	\begin{equation}
	\cW_{\Psi,\Theta}^{\f} := \{ \bx\in\bU  | \ \f(\bx)    \text{  is   }     (\Psi,\Theta)-\text{  approximable}\}\end{equation}
	has measure zero if $\sum_{\ba \in \Z^n\setminus\{0\}} \Psi(\ba) <\infty$.
	
\end{theorem}

 \subsection{Remarks}
 \begin{enumerate}               
\item We have assumed $S$ contains the infinite place in Theorem \ref{thm:main}. This is not a serious assumption, the proof in the case when $S$ contains only finite places needs some minor modifications but follows the same outline. In \cite{MoS1}, the (homogeneous) $S$-adic convergence case is proved in  slightly greater generality than in the present paper. Namely, instead of $\bbQ$, the quotient field of a finitely generated subring of $\bbQ$ is considered. This, more general formulation can also be incorporated into Theorem \ref{thm:main} without difficulty.   
\item The proofs of Theorems \ref{thm:divergence} and \ref{thm:divergence_infty}, follow the ubiquity framework used in \cite{BaBeVe} but need several new ideas to implement in the $p$-adic setting. 
\item Our proof for the convergence case, namely Theorem \ref{thm:main} blends techniques from the homogeneous results, namely \cite{KT, BKM, MoS1} and uses the transference principle developed by Beresnevich and Velani in the form used in \cite{BaBeVe}. The structure of the proof is the same as in \cite{BaBeVe}. 
\item We now undertake a brief discussion of the assumptions (I1) - (I5). The conditions (I1)-(I4) are assumed in \cite{MoS1} and, as explained in loc. cit., are assumed for convenience. Namely, as mentioned in \cite{MoS1}, the statement for any non-degenerate analytic manifold over $\bbQ_S$ follows from Theorem \ref{thm:main}. In \cite{BaBeVe}, the inhomogeneous parameter $\Theta$ is allowed to be $C^2$ when restricted to the nondegenerate manifold. However, we need to assume it to be analytic.
\item Theorem \ref{thm:divergence} is slightly more general than Theorem 1.2 of \cite{MoS2} in the homogeneous setting. In \cite{MoS2}, the approximating function is taken to be of the form
\begin{equation}
		\Psi(\ba)=\frac{1}{\|\ba\|^{n}}\psi(\|\ba\|), \ba\in\Z^{n+1} 
		\end{equation}
 which is a more restrictive class of approximating functions. For an $n$-tuple
$v = (v_1, \cdots, v_n)$ of positive numbers satisfying $v_1 + \cdots + v_n = n$, define the $v$-quasinorm $| ~ |_v$ on $\bbR^n$ by setting
$$ \|\bx\|_v := \max |x_i|^{1/v_i}. $$
Following \cite{BaBeVe} we say that a multivariable approximating function $\Psi$ satisfies property $\mathbf{P}$ if $\Psi(\ba) = \psi(\|\ba\|_v)$ for some approximating function $\psi$ and $v$ as above. As noted in loc. cit. when $v = (1, \dots, 1)$ we have that $\|\ba\|_v = \|\ba\|$ and any approximating function $\psi$ satisfies property $\mathbf{P}$, where $\psi$ is regarded as the function $\ba \to \psi(\|\ba\|)$. The proof of Theorem \ref{thm:divergence} can be modified to deal with the case of functions satisfying property $\mathbf{P}$.
\end{enumerate}

\subsection*{Acknowledgements} We would like to thank Yann Bugeaud for helpful discussions.
	
  \section{The divergence theorem: general setup}
 In this section we prove Theorem \ref{thm:divergence} using ubiquitous systems as in \cite{BaBeVe}. In \cite{BBKM}, the related notion of regular systems was used. 
 Suppose $S$ is a finite set of primes which may or may not contain infinity, with cardinality $l$. Suppose  $\bU=\prod_{\nu\in S}\bU_\nu$ is an open ball in $\prod_{i=1}^l\Q_{\nu_i}^{m_i}$, where if $\nu=\infty$ then $\Q_{\nu}=\R$. We assume $\f=(\f_\nu):\bU\to \Q_S^n$ is an analytic map satisfying condition (I2). For $\delta > 0$ and $Q > 1$ following \cite{BaBeVe} we define 
 $$\Phi^{\f}(Q,\delta) := \Big\{\bx \in \bU \text{ s.t }~\exists~\ba=(a_0,\ba_1)  \in \bbZ\times\bbZ^n\backslash \{0\}~|~\begin{array}{l} |a_0+ \ba_1 \cdot \f(\bx)|^l_S < \delta Q^{-{(n+1)}},\\  \Vert \ba\Vert\leq Q \end{array}\Big\},$$ when $S$ does not contain $\infty$ and otherwise define $$\Phi^{\f}(Q,\delta) := \Big\{\bx \in \bU \text{ s.t }~\exists~\ba \in \Z^n\backslash \{0\}, a_0\in\Z~|~\begin{array}{l} |a_0+ \ba \cdot \f(\bx)|^l_S < \delta Q^{-{n}},\\  \Vert \ba\Vert\leq Q \end{array}\Big\}.$$
 We now recall the definition of a $\mathit{nice}$ function.
 \begin{definition}[\cite{BaBeVe}, Definition 3.2]\label{nice}
 	We say that $\f$ is \textit{nice} at $\bx_0\in \bU$ if there exists a neighbourhood $\bU_0\subset \bU$ of $\bx_0$ and constants $0<\delta, w<1$ such that for any sufficiently small ball $\bB\subset \bU_0$ we have that 
 	\begin{equation}
 	\limsup_{Q\to \infty}|\Phi^{\f}(Q,\delta)\cap \bB |\leq w|\bB|.
 	\end{equation} 
 \end{definition}
 If $\f$ is \textit{nice} at almost every  $\bx_0$ in $\bU$ then $\f$ is called \textit{nice}.  \subsection{ Ubiquitous Systems in $\prod \Q_\nu^{m_\nu} $}
 Let us recall the definition of ubiquitous systems in $\prod \Q_\nu^{m_\nu} $ following \cite{BaBeVe}. Throughout, balls in $\Q_{\nu}^{m_\nu}$ and $\prod \Q_\nu^{m_\nu} $ are assumed to be defined in terms of supremum norms.  Let $\bU=(\bU_\nu)$ be a ball in $\prod \Q_\nu^{m_\nu} $ and $\mathcal{R}=(R_\alpha)_{\alpha\in J}$ be a family of subsets
 $R_\alpha\subset \prod \Q_\nu^{m_\nu} $ indexed by a countable set $J$. The sets $R_\alpha$ are referred to as \emph{resonant sets}. Throughout, $\rho\;:\;\R^+\to\R^+$
 will denote a function such that
 $\rho(r)\to0$ as $r\to\infty$.  Given a set $A\subset \bU$, let
 $$
 \Delta(A,r):=\{\bx\in \bU\;:\; \dist(\bx,A)<r\}
 $$
 where $\dist(\bx,A):=\inf\{|\bx-\ba|: \ba\in A\}$. Next, let
 $\beta\;:\;J\to \R^+\;:\;\alpha\mapsto\beta_\alpha$ be a positive
 function on $J$. Thus the function $\beta$ attaches a `weight'
 $\beta_\alpha$ to the set $R_\alpha$. We will assume that for every
 $t\in \N$ the set $J_t=\{\alpha\in J: \beta_\alpha\le 2^t\}$ is
 finite.\\
 \noindent\textbf{The intersection conditions:} There exists a constant $\gamma$ with $ 0 \leq \gamma \leq m=\sum m_\nu$ such that for any sufficiently large $t$ and for any
 $\alpha\in J_t$, $c\in\cR_\alpha$ and $0< \lambda \le \rho(2^t)$ the
 following conditions are satisfied:
 \begin{equation}\label{i1}
 \big|\bB(c, {\mbox{\small
 		$\frac{1}{2}$}}\rho(2^t))\cap\Delta(R_\alpha,\lambda)\big| \geq c_1 \,
 |\bB(c,\lambda)|\left(\frac{\rho(2^t)}{\lambda}\right)^{\gamma}
 \end{equation}
 \begin{equation}\label{i2}
 \big|\bB\cap
 \bB(c,3\rho(2^t))\cap\Delta(R_\alpha,3\lambda)\big| \leq c_2 \,
 |\bB(c,\lambda)| \left(\frac{r(\bB)}{\lambda}\right)^{\gamma} \
 \end{equation}
 where $\bB$ is an arbitrary ball centred on a resonant set with radius
 $r(\bB)\le 3 \, \rho(2^t)$. The   constants $c_1$ and $ c_2$  are
 positive and absolute. The constant $\gamma$ is referred to as the \emph{common dimension} of $\cR$.
 \begin{definition}
 	Suppose that there exists a ubiquitous function $\rho$ and an
 	absolute constant $k>0$ such that for any ball $\bB\subseteq \bU$
 	\begin{equation}\label{coveringproperty} 
 	\liminf_{t\to\infty} \left|\bigcup_{\alpha\in
 		J_t}\Delta(R_\alpha,\rho(2^t))\cap \bB\right| \ \ge \ k\,|\bB|.
 	\end{equation}
 	Furthermore, suppose that the intersection conditions \eqref{i1} and \eqref{i2}  are satisfied. Then
 	the system $(\mathcal{R}, \beta)$ is called \emph{locally ubiquitous in $\bU$
 		relative to $\rho$.}
 \end{definition}
 Let $(\mathcal{R},\beta)$ be a ubiquitous system in $\bU$ relative to $\rho$ and
 $\phi$ be an approximating function. Let $\Lambda(\phi)$ be the set
 of points $\bx\in \bU$ such that the inequality
 \begin{equation}\label{vb+}
 \dist(\bx,R_{\alpha})<\phi(\beta_\alpha)
 \end{equation}
 holds for infinitely many $\alpha\in J$.\\
 We are going to use this following ubiquity lemma from \cite{BaBeVe} in our main proof. 
 \begin{lemma}\label{ubi}
 	Let $\phi$ be an approximating function and $(\mathcal{R},\beta)$ be a locally ubiquitous system in $\bU$ relative to $\rho$. Suppose that there is a $0<\lambda<1$ such that $\rho(2^{t+1})<\lambda\rho({2^t})~\forall~ t \in \N.$ Then for any $s>\gamma,$ 
 	\begin{equation}
 	\mathcal{H}^s(\Lambda(\phi))=\mathcal{H}^s(\bU) \text{ if }\sum_{t=1}^\infty \frac{{\phi(2^t)}^{s-\gamma}}{{\rho(2^t)}^{m-\gamma}}=\infty.
 	\end{equation}
 \end{lemma}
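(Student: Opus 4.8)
This is the ubiquity lemma of \cite{BaBeVe}, proved there over $\R^m$, and the plan is to verify that its proof is purely metric--measure theoretic and carries over to $\Q_p^m$ verbatim. The structural fact that makes this work is that $\Q_p^m$ with the normalized Haar measure is Ahlfors $m$-regular: a ball of radius $r$ has measure comparable to $r^m$, and $\mathcal{H}^m$ is comparable to Haar measure; more generally $\mathcal{H}^s$ of a ball of radius $r$ behaves like $r^s$. Consequently the general ubiquity-to-Hausdorff-measure framework for limsup sets in regular metric spaces (the framework underlying \cite{BaBeVe}) applies. I would first reduce the asserted full-measure statement to the uniform local estimate
\[
\mathcal{H}^s\big(\Lambda(\phi)\cap \bB\big)\;\gg\;\mathcal{H}^s(\bB)
\]
valid for every ball $\bB\subseteq\bU$ with an implied constant independent of $\bB$; since $\Lambda(\phi)$ is a limsup set, and in particular $G_\delta$, a standard covering/density argument then yields $\mathcal{H}^s(\Lambda(\phi))=\mathcal{H}^s(\bU)$.

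For the local estimate the plan is the usual Cantor construction. Inside $\bB$ one builds a nested family of collections of balls indexed by a rapidly increasing sequence of levels $t_1<t_2<\cdots$; the geometric decay hypothesis $\rho(2^{t+1})<\lambda\rho(2^t)$ ensures that the working scales $\rho(2^{t_j})$ (and $\phi(2^{t_j})$, which decays at least as fast) behave like geometric sequences, so successive levels are comfortably separated. Passing from level $j$ to level $j+1$, inside each surviving ball $B$ one invokes the covering property \eqref{coveringproperty} to produce enough resonant sets $R_\alpha$ with $\beta_\alpha\le 2^{t_{j+1}}$ whose neighbourhoods meet $B$, then uses the lower intersection bound \eqref{i1} to place inside $B$ a large, essentially disjoint family of sub-balls of radius $\asymp\phi(2^{t_{j+1}})$, each contained in $\Delta(R_\alpha,\phi(\beta_\alpha))$ for the corresponding $\alpha$; the upper intersection bound \eqref{i2} caps how much of $B$ any single neighbourhood $\Delta(R_\alpha,\cdot)$ can occupy, so after discarding the over-covered sub-balls a definite proportion remains. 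Spreading mass uniformly over the retained sub-balls at every level defines a probability measure $\mu$ on the resulting compact set $\mathbf{K}\subseteq \Lambda(\phi)\cap\bB$. Here the ultrametric structure helps: any two $p$-adic balls are either nested or disjoint, so the verification that $\mu\big(B(x,r)\big)\ll r^s$ at all intermediate radii is cleaner than in $\R^m$, with no partial overlaps to estimate.

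The divergence hypothesis $\sum_t \phi(2^t)^{s-\gamma}/\rho(2^t)^{m-\gamma}=\infty$ enters exactly at the point where one needs the construction not to terminate: it forces the product over $j$ of the per-level survival proportions, weighed against $r^{-s}$, to remain bounded below, and this is what keeps the uniform bound on $\mu$ alive through all levels. The Mass Distribution Principle then gives $\mathcal{H}^s(\mathbf{K})>0$, and tracking the constants through the construction upgrades this to $\mathcal{H}^s(\mathbf{K})\gg\mathcal{H}^s(\bB)$, which is the required local estimate. The part I expect to be genuinely delicate is precisely this bookkeeping: controlling, level by level, how many sub-balls survive the removal forced by \eqref{i2}, checking the mass bound at the right intermediate radii, and confirming that it is the divergence of the series---and not some incidental feature---that prevents the surviving proportions from collapsing. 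All of this is done in \cite{BaBeVe} in the real case and needs no new idea over $\Q_p^m$.
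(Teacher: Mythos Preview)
Your proposal is correct in spirit and in detail, but you should know that the paper does not actually prove this lemma at all: it is simply quoted from \cite{BaBeVe} (and ultimately from the general ubiquity framework of Beresnevich--Dickinson--Velani \cite{BDV}) and used as a black box. Your outline---Ahlfors $m$-regularity of $\Q_p^m$, the Cantor-type construction driven by the covering property \eqref{coveringproperty} and the intersection bounds \eqref{i1}--\eqref{i2}, and the Mass Distribution Principle---is exactly how the lemma is proved in those references, so there is no discrepancy of approach, only of explicitness.

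One small point worth noting: the framework in \cite{BDV} is already formulated for limsup sets in general metric measure spaces satisfying suitable regularity hypotheses, so strictly speaking no adaptation from $\R^m$ to $\Q_p^m$ is needed---one just checks that $\Q_p^m$ with Haar measure fits the hypotheses, which is precisely the Ahlfors regularity observation you begin with. Your remark that the ultrametric inequality makes the ball-overlap bookkeeping cleaner is true but not essential, since the general-metric-space argument already handles arbitrary overlaps via covering lemmas.
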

\subsection{Some auxiliary lemmas}
 We will need the following strong approximation lemma mentioned in \cite{Zelo}. 
 \begin{lemma} \label{Strong} Suppose $(\xi_{\infty},\xi_{p_1},\cdots,\xi_{p_l})\in\R\times\Q_S$ where $S=\{p_1,\cdots,p_l\}$.
 	For any
 	$\bar \epsilon = (\epsilon_{\infty},\epsilon_{p_1},\cdots,\epsilon_{p_l})
 	\in \mathbb{R}_{>0}^{l+1}$ satisfying the inequality
 	\begin{equation}
 	\epsilon_{\infty} \geq
 	\frac{1}{2} \prod\epsilon_{p_i}^{-1} p_i,
 	\end{equation}
 	there exists a rational number
 	$r \in \mathbb{Q}$ such that
 	\begin{equation}
 	\begin{aligned}
 	&| r - \xi_{\infty} |_{\infty} \leq \epsilon_{\infty},
 	\\
 	&| r - \xi_{p_i} |_{p_i} \leq \epsilon_{p_i} \forall i=1,\cdots, l
 	,
 	\\
 	&| r |_{q} \leq 1
 	\quad \forall~q \notin S.
 	\end{aligned}
 	\end{equation}
 \end{lemma}

 Before we start proving  the main theorem in this section we would like to calculate a covolume formula of certain lattices. Also, we want to clarify some notations that are going to be used hereafter. Corresponding to each $\nu\in S$, the prime is $p_\nu$, and the corresponding norms are denoted by $\vert.\vert_\nu$ and $\Vert .\Vert_\nu$.
 \begin{lemma}\label{covolume} Suppose $S$ does not contain $\infty$ and  $\by=(\by^\nu)_{\nu}=(y_1^\nu,\cdots,y_n^\nu)\in\Q_S^n$ is such that $\vert y_i^\nu\vert_{\nu}\leq 1~\forall~\nu\in S$, $i=1,\cdots,n$ and $k_{\nu}\geq 0$. Then 
 	\begin{equation}\Gamma=\left\{
 	(z_0, z_1,\cdots, z_n)\in\Z^{n+1} :
 	\begin{array}{l}
 	\vert z_0 + z_1y_1^{\nu}+\cdots+ z_ny_n^{\nu}\vert_{\nu}\leq\frac{1}{p_{\nu}^{k_\nu}} ~\forall \nu,\\
 	|z_i|_\nu\leq \frac{1}{p_\nu} ~\forall \nu\in S\\
 	i=1,\cdots n
 	\end{array}\right\}
 	\end{equation} is a lattice in $\bbZ^{n+1} $
 	and $\Vol(\R^{n+1}/\Gamma)= \prod p_\nu^{k_{\nu}+n}$.
 \end{lemma}

 \begin{proof}
 	Note that $\Gamma$ is a discrete subgroup of $\bbZ^{n+1}$ and that $(\prod p_{\nu}^{k_\nu},0,\cdots,0)\in\Z^{n+1} $ is in $\Gamma$. Since the diagonal embedding of $\Z$ is dense in $\prod_{\nu} Z_{p_\nu}$, where $\Z_{p_\nu}$ is the unit disc in $\Q_{p_\nu},$ we can choose $q_i\in\Z$ such that 
 	\begin{equation}\label{q_conditions}
 	|q_i- \big(\prod p_\nu) y^\nu_i|_\nu\leq\frac{1}{p_\nu^{k_\nu}}, \forall \nu\in S.
 	\end{equation} 
 This implies that $(q_i,0,\cdots,-\prod p_{\nu},\cdots,0)\in \Gamma$ where $-\prod p_\nu$ is in the $(i+1)$th position. We claim that 
 $$\left\{(\prod p_{\nu}^{k_\nu},0,\cdots,0),(q_i,0,\cdots,-\prod p_\nu,\cdots,0)\ | \ i=1,\cdots,n \right\}$$ is a basis of $\Gamma$. The matrix comprising these elements as column vectors is as follows \[
 	A:=	\begin{bmatrix}
 	\prod p_{\nu}^{k_\nu} & q_1 &   & \dots &q_i &\dots & q_n\\
 	0    &  -\prod p_\nu &      &   \dots     &0   &\dots & 0  \\
 	\vdots & \vdots & &\vdots&\vdots &\vdots &\vdots,\\
 	0    &   0  &     &   \dots &-\prod p_\nu &\dots & 0\\
 	\vdots & \vdots  & &\vdots&\vdots&\vdots&\vdots \\
 	0    &     0 & &\dots &0  &\dots   & -\prod p_\nu
 	\end{bmatrix}.\]
 	We want to show that  if $\mathbf{z}=(z_0,z_1,\cdots,z_n)\in \Gamma,$ then there exists $\mathbf{s}=(s_o,s_1,\cdots,s_n)\in \Z^{n+1}$ such that $A\mathbf{s}=\mathbf{z}$. Note that \begin{equation}
 	A\inv \mathbf{z} = \left(\frac{(\prod p_{\nu})z_0+q_1z_1+\cdots+q_nz_n}{\prod p_\nu^{k_\nu+1}},-\frac{z_1}{\prod p_\nu},\cdots,-\frac{z_n}{\prod p_\nu}\right).
 	\end{equation}
 	As $\mathbf{z}\in \Gamma,$ we have that $p_\nu|z_i~\forall~ i=1,\cdots,n,$ hence $\prod  p_{\nu}|{z_i}$  for all $i$. Now it is enough to show that $p_\nu^{k_\nu+1} | (z_0p+q_1z_1+\cdots+z_nq_n)$ for all $\nu\in S$. Note that
 	$$z_0\prod p_\nu+z_1q_1+\cdots+z_nq_n= \prod p_\nu(z_0+z_1y^\nu_1+\cdots+z_ny^\nu_n)+z_1(q_1-(\prod p_\nu)y^\nu_1)+\cdots+z_n(q_n-(\prod p_\nu)y^\nu_n).$$
 	Now the conclusion follows since $\mathbf{z}\in\Gamma$ and using (\ref{q_conditions}).

 \end{proof}

 \section{Proof of the divergence theorem when $\infty \notin S$}
We begin by recalling  the following two Theorems from \cite{MoS2} which we will use to prove Theorem \ref{lemma:nice}. 
\begin{theorem}\label{<S}
	Let $S$ be as in (I0), $\bU$ be as in (I1), and assume that $\mathbf{f}$ satisfies (I2) and (I3). Then for any
	$\bx=(\bx_{\nu})_{\nu\in S}\in \bU$, one can find a neighborhood
	$\mathbf{V}=\prod V_{\nu}\subseteq \bU$ of $\bx$ and $\alpha>0$ with
	the following property: for any ball $\mathbf{B}\subseteq \mathbf{V}$,
	there exists $E>0$ such that for any choice of	$T_0,\cdots,T_n\ge 1$, $K_{\nu}>0$ and  $0<\delta^l\le \min\frac{1}{T_i}$,
 with $\delta^l T_0\cdots
			T_n\prod K_{\nu}\le 1$ one has
	\begin{equation}\label{<eqn}\left|\left\{\bx\in\mathbf{B}|\hspace{1mm}\exists\ 
	\ba=(a_0,\ba_1)\in\Z^{n+1}\setminus\{0\}:\begin{array}{l}\vert a_0+ \ba_1.\f(\bx)\vert_S<\delta\\
	\|\ba_1\nabla \f_{\nu}(\bx_\nu)\|<K_{\nu},\hspace{2mm}\\
	|a_i|_\infty<T_i, 1 \leq i \leq n\end{array}\right\}\right|\le
	E\hspace{.5mm}\varepsilon_1^{\alpha}|\mathbf{B}|,\hspace{5mm}
	\end{equation} 
	where
	$\varepsilon_1=\max\{\delta,(\delta^l{ T_0\cdots
			T_n\prod K_{\nu}})^{\frac{1}{(n+1)}}\},$ where $|S|=l$.
\end{theorem}\noindent
\begin{theorem}\label{>S}
	Assume that $\bU$ and $\f$ satisfy the conditions as in the previous theorem and $0<\varepsilon<\frac{1}{2l}$. For any $\delta>0$ and any ball $\bB\subset \bU$
		\begin{equation}\label{>eqn}\left|\left\{\bx\in\mathbf{B}|\hspace{1mm}\exists\ 
	\ba=(a_0,\ba_1)\in\Z^{n+1}\setminus\{0\}:\begin{array}{l}\vert a_0+ \ba_1 .\f(\bx)\vert^{l}_S<\delta T^{-(n+1)}\\
	\|\ba_1\nabla \f_{\nu}(\bx_\nu)\|>\Vert \ba\Vert_\infty^{-\varepsilon},\hspace{2mm}\forall\nu\in S\\
	|a_i|_\infty<T_i, 0 \leq i \leq n\end{array}\right\}\right|\le
	C\hspace{.5mm}\delta|\mathbf{B}|,\hspace{5mm}
	\end{equation} for some universal constant $C$.
\end{theorem}
The following Theorem shows that the functions in our set-up are nice. 
\begin{theorem}\label{lemma:nice}
	Assume that $\f:\bU\subset \Q_{p_1}^{m_1}\times\cdots\times \Q_{p_l}^{m_l}\to \Q_S^n$ satisfies condition (I2) (nondegenerate at $\bx\in \bU$). Then there exists a sufficiently small ball  $\bB_0\subset \bU$ centred at $\bx_0$ and a constant $C>0$ such that for any ball $\bB\subset \bB_0$ and any $1>\delta>0 $, for sufficiently large $Q$, one has 
	\begin{equation}
	|\Phi^{\f}(Q,\delta)\cap \bB |\leq C\delta |\bB|.
	\end{equation}
\end{theorem}
\begin{proof}
	Let $\Phi^{\f}(Q,\delta)\cap \bB\subset\Phi_{2}^{\f}(Q,\delta)\cup_{\nu\in S} \Phi_{1,\nu}^{\f}(Q,\delta)$ where, $$\Phi_{1,\nu}^{\f}(Q,\delta):=\left\{\bx\in\mathbf{B}|\hspace{1mm}\exists\ 
	\ba=(a_0,\ba_1)\in\Z^{n+1}\setminus\{0\}:\begin{array}{l}\vert a_0+ \ba_1 .\f(\bx)\vert^{l}_S<\delta Q^{-(n+1)}\\
	\|\ba_1\nabla \f_{\nu}(\bx_\nu)\|\leq\Vert \ba\Vert_\infty^{-\varepsilon} \hspace{2mm}\\
	|a_i|_\infty<Q, 0 \leq i \leq n\end{array}\right\}$$
	 and
$$\Phi_{2}^{\f}(Q,\delta):= \left\{\bx\in\mathbf{B}|\hspace{1mm}\exists\ 
\ba=(a_0,\ba_1)\in\Z^{n+1}\setminus\{0\}:\begin{array}{l}\vert a_0+ \ba_1 .\f(\bx)\vert^{l}_S<\delta Q^{-(n+1)}\\
\|\ba_1\nabla \f_{\nu}(\bx_\nu)\|>\Vert \ba\Vert_\infty^{-\varepsilon},\hspace{2mm}\forall \nu\in S\\
|a_i|_\infty<Q, 0 \leq i \leq n\end{array}\right\},$$ and $\varepsilon<\frac{1}{2l}$.
Now applying \ref{<S} we have $|\Phi_{1,\nu}^{\f}(Q,\delta)|\leq E ({\delta Q^{-\varepsilon}})^{\frac{\alpha}{n+1}}|\bB|$, where the constant $E$ is uniform. For large enough $Q$,  $E ({\delta Q^{-\varepsilon}})^{\frac{\alpha}{n+1}}\leq C.\delta $. By theorem \ref{>S} we have $|\Phi_{2}^{\f}(Q,\delta)|\leq C\delta|\bB|$, where $C$ is independent of the ball. Hence the theorem follows.
\end{proof}
We will now state the main two theorems of this section for the case where $S$ does not contain $\infty$. Let $\psi : \mathbb{N} \to \R_{+}$ be a decreasing function. 
 \begin{theorem} \label{thm:nice}
Assume that $\f:\bU\subset \Q_{p_1}^{m_1}\times\cdots\times \Q_{p_l}^{m_l}\to \Q_S^n$ is nice and satisfies the standing assumptions (I2) and (I3-ii) and  $s>\sum m_i-l=m-l$. Let $\Theta:\bU\to \Q_S$ be an analytic map satisfying assumption (I5). Let $\Psi(\ba)=\psi(\|\ba\|_\infty) ,\ba\in\Z^{n+1} $ be an approximating function. Then,
 \begin{equation}\label{main sum}
 	\mathcal{H}^s(\mathcal{W}^\f_{(\Psi,\Theta)}\cap\bU)=\mathcal{H}^s(\bU) \text{    if   } \sum (\Psi(\ba))^{\frac{s-m+l}{l}}=\infty,
 	\end{equation} where $m=\sum m_i$.
 	\end{theorem}
In view of Theorem \ref{lemma:nice}, Theorem \ref{thm:nice} implies Theorem \ref{thm:divergence}. Note that condition (I3) implies the nondegeneracy of $\f$ at every point of $\bU$.

  Now we will construct a ubiquitous system which will give  the main result of this section.
 \begin{theorem} \label{ubiquity}
 	Let $\bx_0\in \bU=\prod \bU_\nu$ be such that $\f:\bU\subset \prod \Q_{\nu}^{m_\nu}\to \Q_S^n$ is \textit{nice} at $\bx_0$ and satisfies (I3-ii). Then there is a neighbourhood $\bU_0$ of $\bx_0,$ constants $\kappa_0>0$ and $\kappa_1>0$ and a collection $\cR:=(R_F)_{F\in\mathcal{F}_n}$ of sets $R_F\subset \widetilde{R_F}\cap \bU_0$ such that the system $(\cR,\beta)$ is locally ubiquitous in $\bU_0$ relative to $\rho(r)=\kappa_1r^{-\frac{n+1}{l}} $ with common dimension $\gamma:=(\sum m_{\nu})-l,$ where
 	 $$
 	 \mathcal{F}_n:=\left\{F:\bU\to\Q_S  \left |\begin{array}{l} F(\bx)= a_0+a_1f_1(\bx)+\cdots+a_nf_n(\bx),\\
 	 \ba=(a_0,a_1,\cdots,a_n)\in\Z^{n+1}\setminus\{\mathbf{0}\} \end{array}\right. \right \}	$$ 
 	 and given $F\in\mathcal{F}_n$ we consider semi resonant sets
 	 \begin{equation}
 	 \widetilde{R_F}:=\{\bx\in\bU:\ (F+\Theta)(\bx) \ =\ 0\}
 	 \end{equation}
 	 and $$
 	 \beta:\ \mathcal{F}_n\to \R^+\ : F\to \ \beta_F=\kappa_0|(a_0,a_1,\cdots,a_n)|=\kappa_0\Vert\ba\Vert_\infty.
 	 $$
 \end{theorem}
 \begin{proof}
  Let $\pi_\nu:\Q_\nu^{m_\nu}\to\Q_{\nu}^{m_\nu-1}$ be the projection map given by $$\pi_\nu(x^\nu_1,x^\nu_2,\cdots,x^\nu_m)=(x^\nu_2,\cdots,x^\nu_m),$$ and $\pi=(\pi_\nu):\prod\Q_\nu^{m_\nu} \to \prod\Q_\nu^{m_\nu-1}$
   and let 
	\begin{equation}
 \widetilde\bV:=\pi(\widetilde R_F\cap\bU_0),
 \\
  \bV=\bigcup_{3\rho(\beta_F)-\text{balls} B\subset \widetilde{\bV}}\frac{1}{2}B
 	\end{equation}
 and
 \begin{equation}
 R_F=\left\{\begin{array}{l}
 
\pi\inv(\bV)\cap\widetilde{R_F}  \ \text{if} \ \ |\partial_1(F^\nu+\Theta^\nu)(\bx_\nu)|_\nu> \lambda_\nu|\nabla(F^{\nu}+\Theta^\nu)(\bx_\nu)|_\nu \ \forall \nu\in S, \ \forall \ \bx=(\bx_\nu)\in \bU_0\\

\emptyset  \ \ \ \ \ \ \ \ \ \ \ \ \ \ \ \  \text{otherwise}.
 \end{array}\right 
.\end{equation} 
where $0<\lambda_\nu<1$ is fixed for each $\nu\in S$. \\

We claim that the $R_F$ are resonant sets. The intersection property, namely (\ref{i1}) and (\ref{i2}), when $S$ is singleton, can be checked exactly as in the case of real numbers as accomplished in \cite{BaBeVe}, Proposition 5. We only need to note that implicit function theorem for $C^l(U)$ in $\R^n$ was used in \cite{BaBeVe}. The implicit function theorem in $\Q_p$ holds for analytic maps and all our maps have been assumed analytic, so the proof in \cite{BaBeVe} goes through verbatim. \\
\noindent
We now consider the case when $S$ contains more than one prime. One can easily see that $\widetilde {R_F}=\prod\widetilde {R_F^\nu}$, where $\widetilde{R_F^\nu}$ are the semi resonant sets coming from single $p_\nu$ case. Again it can be easily seen that $R_F=\prod R_F^\nu$, where $R_F^\nu$ is the resonant set corresponding to $\{p_\nu\}$. Note that $\Delta(\prod A_\nu,r)=\prod \Delta(A_\nu,r)$, where $A_\nu\subset \Q_\nu^{m_\nu}$. Since the intersection and product operations on sets can interchanged and for singleton $S$ ubiquitous intersection properties hold, we get intersection properties to be satisfied for any $S$ with common dimension being $\sum(m_\nu-1)$ after using the property of product measure.

It remains to check the covering property (\ref{coveringproperty}) to establish ubiquity. Without loss of generality we will assume that the ball $\bU_0$ in the definition of (\ref{nice}) satisfies
\begin{equation}
\diam{\bU_0}\leq \min_\nu \frac{1}{p_\nu}.
\end{equation}
From the Definition \ref{nice} of  $\f$ being nice at $\bx_0,$ there exist fixed $0<\delta,w<1$ such that for any arbitrary ball $\bB\subset\bU_0,$ 
\begin{equation}
	\limsup_{Q\to \infty}|\Phi^{\f}(Q,\delta)\cap \frac{1}{2}\bB |\leq w|\frac{1}{2}\bB|.
\end{equation}
 So for sufficiently large $Q$ we have that 
 $$
 |\frac{1}{2}\bB\setminus \Phi^{\f}(Q,\delta)|\geq \frac{1}{2}(1-w)|\frac{1}{2}\bB|=2^{-(\sum (m_\nu)+1)}(1-w)|\bB|. 
 $$
 Therefore it is enough to show that 
 \begin{equation}
 \frac{1}{2}\bB\setminus \Phi^{\f}(Q,\delta)\subset\bigcup _{F\in\mathcal{F}_n\\
 \beta_F\leq Q}\Delta(R_F,\rho(Q))\cap\bB.
 \end{equation}
 Suppose $\bx\in \frac{1}{2}\bB\setminus \Phi^{\f}(Q,\delta).$ Consider the lattice
 \begin{equation}
 \Gamma_{\bx}=\left\{(a_0,a_1,\cdots,a_n)\in\Z^{n+1}: \begin{array}{l}|a_0+a_1f_1(\bx)+\cdots+a_nf_n(\bx)|^l_S<\delta Q^{-(n+1)}\\
 |a_i|_\nu\leq\frac{1}{p_\nu} \ \forall \nu\in S, \ \forall \ {1\leq i\leq n}\end{array}\right\},
 \end{equation}
 and the convex set  $K=[-Q,Q]^{n+1}$ in $\R^{n+1}$. Note that $$
 |a_o+a_1f_1(\bx)+\cdots+a_nf_n(\bx)|_\nu<\delta^{\frac{1}{l}} Q^{-\frac{n+1}{l}}$$
if and only if
$$|a_o+a_1f_1(\bx)+\cdots+a_nf_n(\bx)|_\nu\leq {p_\nu^{[\log_{p_\nu}(\delta Q^{-(n+1)})^\frac{1}{l}]}}.
 $$
 So by Lemma \ref{covolume} we have that 
 \begin{eqnarray*}
 \Vol(\R^{n+1}/\Gamma_{\bx}) & = & \prod p_\nu^n.p_\nu^{-[\log_{p_\nu}(\delta Q^{-(n+1)})^\frac{1}{l}]} \\
    & \leq & \prod p_\nu^{n}\frac{1}{\prod p_\nu^{log_{p_\nu}{(\delta Q^{-(n+1)})^\frac{1}{l}-1}}}\\
    & \leq &  \prod Q^{\frac{n+1}{l}}\frac{p_\nu^{n+1}}{\delta^\frac{1}{l}}\\
    &=& Q^{n+1}.\frac{\prod p_\nu^{n+1}}{\delta}. 
  \end{eqnarray*}
Using the fact that $\bx\notin \Phi^{\f}(Q,\delta) $ we get the first minima $\lambda_1=\lambda_1(\Gamma_{\bx},K)>1$. Therefore using Minkowski's theorem on successive minima, we have that $$
2^{n+1}Q^{n+1}\lambda_1.\lambda_2.\cdots.\lambda_{n+1}\leq 2^{n+1}\Vol(\R^{n+1}/\Gamma_{\bx})\leq 2^{n+1}Q^{n+1}\frac{\prod p_\nu^{n+1}}{\delta}.
$$
This implies that $\lambda_{n+1}\leq \frac{\prod p_\nu^{n+1}}{\delta}.$ By the definition of $\lambda_{n+1}$ we get $n+1$ linearly independent integer vectors $\ba_j=(a_{j,0},\cdots,a_{j,n})\in\Z^{n+1}(0\leq j\leq n)$ such that the functions $F_j$ given by $$
 F_j(\by)=a_{j,0}+a_{j,1}f_1(\by)+\cdots+a_{j,n}f_n(\by)
$$ satisfy 
\begin{equation}\label{conditions}
\left\{ \begin{array}{l}
|F_j(\bx)|^l_S<\delta Q^{-(n+1)}\\
|a_{j,i}|_\infty\leq Q.\frac{\prod p_\nu^{n+1}}{\delta}\\
|a_{j,i}|_\nu\leq\frac{1}{p_\nu} \text{ for } 0\leq j \leq n, 1\leq i\leq n\ \forall \nu\in S.
\end{array}\right.
\end{equation} 
As $\lambda_1>1$ so for every $0\leq j \leq n$ there exists at least one $0\leq j^\star\leq n$ such that $|a_{j,j^\star}|_\infty>Q$.

Now consider the following system of linear equations,
\begin{equation}\label{linear}
\begin{array}{l}
\eta_0F_0(\bx)+\eta_1F_1(\bx)+\cdots+\eta_nF_n(\bx)+\Theta(\bx)=0\\
\eta_0\partial_1F_0(\bx)+\eta_1\partial_1F_1(\bx)+\cdots+\eta_n\partial_1F_n(\bx)+\partial_1\Theta(\bx)=1\\
\eta_0a_{0,j}+\cdots+\eta_na_{n,j}=0 \ \  (2\leq j \leq n).
\end{array}
\end{equation}
Since $f_1(\bx)=x_1 $, the determinant of this aforementioned  system is $\det(a_{j,i})\neq 0$. Therefore there exists a unique solution to the system, say $(\eta_0,\eta_1,\cdots,\eta_n)\in \Q_S^n$. By the argument above, there is at least one $|a_{j,i} |_\infty > Q$. Without loss of generality assume $|a_{0,0}|_\infty >Q$. Using the strong approximation Theorem \ref{Strong} for $i=0,\cdots,n$ we get $r_i\in\Q$ such that
\begin{equation}\label{r_i}
\begin{aligned}
& |r_i-2\prod p_\nu|_\infty\leq\prod p_\nu \text{ if }  a_{i,0}>0 \text{   otherwise  } |r_i+2\prod p_\nu|_\infty<\prod p_\nu,\\
&
|r_i-\eta^\nu_i|_\nu\leq 1 \ \forall~\nu\in S,\\
& 
|r_i|_{\nu}\leq 1
\quad \text{for all} \ \text{ primes }\nu\notin S.
\end{aligned}
\end{equation}
Now take the function 
\begin{equation}\begin{aligned}
F(\by)=r_0F_0(\by)+r_1F_1(\by)+\cdots+r_nF_n(\by)\\
=a_0+a_1f_1(\by)+\cdots+a_nf_n(\by),
\end{aligned}
\end{equation} where 
\begin{equation}\label{a_i}
a_i=r_0a_{0,i}+r_1a_{1,i}+\cdots+r_na_{n,i},
\  \forall \ i=0,\cdots,n.
\end{equation}

We claim that\\

\textbf{Claim $1$.}The $a_i$s are all integers.\\ 

From (\ref{r_i}) and (\ref{a_i}) we get 
\begin{equation}\label{claim1.1}
|a_i|_\nu\leq 1, \ \forall \ \ {i}=0,\cdots,n \text{ for } \nu\notin S
\end{equation}
and by (\ref{r_i}), (\ref{linear}) and (\ref{conditions}) we have 
\begin{equation}\begin{aligned}
|a_i|_\nu\leq \max_{j=0,\cdots,n} \{|\eta^\nu_j-r_j|_\nu|a_{j,i}|_\nu\}\leq 1 \ \forall \nu\in S
\quad \text{and for } i=2,\cdots,n.
\end{aligned}
\end{equation} So $a_i's$ are all integers for $i=2,\cdots,n$.
Now note that $$
F(\bx)+\Theta(\bx)\\
=(r_0-\eta_0)F_0(\bx)+\cdots+(r_n-\eta_n)F_n(\bx).
$$ Let us denote $F=(F^\nu)$ and $\Theta=(\Theta^\nu) $ and we are considering $\Q\subset \Q_S$, diagonally embedded. Therefore we have \begin{equation}\label{condition1}
|(F+\theta)(\bx)|^l_S\leq (\delta Q^{-(n+1)}).
\end{equation}
Again $$
\partial_1(F+\Theta)(\bx)=(r_0-\eta_0)\partial_1F_0(\bx)+\cdots+(r_n-\eta_n)\partial_1F_n(\bx)+1.
$$ Since $\forall \nu\in S$, $1\leq i\leq n$ and $0\leq j\leq n$, $|a_{j,i}|_\nu\leq\frac{1}{p_\nu}$ so $|\partial_1F^\nu_j(\bx)|_\nu\leq \frac{1}{p_\nu}$ and thus by (\ref{r_i}) we get 
\begin{equation}\label{partial_condition}
1-\frac{1}{p_\nu}\leq|\partial_1(F^\nu+\Theta^\nu)(\bx)|_\nu\leq 1.
\end{equation} 
Now we can show that $a_1$ and $a_0$ are also integers. Since $f_1(\by)=y_1,$ we have 
\begin{equation}
a_1=\partial_1(F+\Theta)(\bx)-\partial_1\Theta(\bx)-\sum_{j =2}^{n}a_j\partial_1f_j(\bx)
\end{equation}
which implies that $|a_1|_\nu\leq 1 \ \forall~\nu\in S$. This together with (\ref{claim1.1}) proves that  $a_1$ is an integer. We similarly prove that $a_0$ is an integer. We can write
\begin{equation}\begin{aligned}\label{a_0}
a_0=(F+\Theta)(\bx)-\Theta(\bx)-\sum_{j =1}^{n}a_jf_j(\bx).
\end{aligned}
\end{equation}
This implies that $|a_0|_\nu\leq 1\ \forall~\nu\in S$ and thus by (\ref{a_0}) and (\ref{claim1.1}) we get that $a_0$ is integer. So the first claim is proved.

Now we look at the infinity norm of the integers $a_i$.
By (\ref{a_i}), (\ref{conditions}) and (\ref{r_i}) we have 
\begin{equation}\label{a_infty}
\begin{aligned}
|a_i|_\infty\leq|r_0a_{0,i}+\cdots+r_na_{n,i}|_\infty\\
\leq 3(\prod p_\nu)(n+1)Q.\frac{\prod p_\nu^{n+1}}{\delta}
\end{aligned}
\quad \text{ for } i=0,1,\cdots,n.
\end{equation}
 By the choice of $r_i$ we have  $a_0>0$ and using the fact that $Q<|a_{0,0}|_\infty$ we get that $|a_0|_\infty>\prod p_\nu Q$ and therefore $\Vert\ba\Vert_\infty>\prod p_\nu Q$.

So by (\ref{a_infty}) and the previous observation we get 
\begin{equation}\label{beta}
\frac{1}{3(\prod p_\nu)(n+1)}\prod p_\nu^{-(n)}\delta.Q<\beta_F=\frac{1}{3(\prod p_\nu)(n+1)}\prod p_\nu^{-(n+1)}\delta||\ba||_\infty\leq Q,\end{equation} 
here $\kappa_0=\frac{1}{3(\prod p_\nu)(n)}\prod p_\nu^{-(n)}\delta$. 
Note that  for all $\by\in\bU_0$ we have 
\begin{equation}
\partial_1(F+\Theta)(\bx)=\partial_1(F+\Theta)(\by)+\sum_{j=1}^m\Phi_{j1}(\partial_1(F+\Theta))(\star)(x_j-y_j)
\end{equation}
 where $\star$ is from the coefficients of $\bx$ and $\by$. By using (\ref{partial_condition}) and by the fact that $\diam(\bU_0)\leq\min \frac{1}{p_\nu}$ we have
 \begin{equation}
 |\partial_1(F^\nu+\Theta^\nu)(\by_\nu)|_\nu\geq 1-\frac{2}{p_\nu} \ \ \forall \  (\by_\nu)\in\bU_0 \ \forall~\nu\in S.
 \end{equation}
 So $F$ satisfies $|\partial_1(F^\nu+\Theta^\nu)(\bx_\nu)|_\nu> (1-\frac{2}{p_\nu})|\nabla(F^\nu+\Theta^\nu)(\bx_\nu)|_\nu \ \forall \ \bx=(\bx_\nu)\in \bU_0$ and thus by the constructions $\Delta(R_F,\rho(Q))\neq \emptyset$.\\ 

 \textbf{Claim $2$.} $\bx \in \Delta(R_F,\rho(Q))$.\\
 
 We set $r_0 := \diam(\bB)$ and define the function 
 $$g^\nu(\xi^\nu) :=(F^\nu+\Theta^\nu)(x^\nu_1+\xi^\nu,x_2,\cdots,x^\nu_{m_\nu}), \text { where } |\xi|_S=|(\xi^\nu)|_S<r_0$$ and for all $\nu\in S$.
 Then \begin{equation}
 \begin{aligned}
 |g^\nu(0)|_p=|(F^\nu+\Theta^\nu)(\bx)|^l_\nu<\delta Q^{-(n+1)} \\
 \text{ and } |{g^\nu}'(0)|_p=|\partial_1(F^\nu+\Theta^\nu)(\bx)|_\nu>1-\frac{1}{p_\nu}.
\end{aligned}
\end{equation} 
Now applying Newton's method there exists $\xi_0=(\xi^\nu_0)$ such that $g^\nu(\xi^\nu_0)=0$ and $|\xi^\nu_0|_\nu<\frac{p_\nu}{(p_\nu-1)}(\delta Q^{-(n+1)})^{\frac{1}{l}}$. For sufficiently large $Q$ we get $\bx_{\xi_0}=(x_1+\xi_0,x_1,\cdots,x_n)\in \bB,$ that $(F+\Theta)(\bx_{\xi_0})=0$ and that $\Vert\bx-\bx_{\xi_0}\Vert_S\leq(\max \frac{p_\nu}{(p_\nu-1)})(\delta Q^{-(n+1)})^{\frac{1}{l}}$. Then we will argue exactly same as in \cite{BaBeVe}. We recall the argument for the sake of completeness. By  the Mean Value Theorem we will get 
$$\begin{aligned}
|(F+\Theta)(\by)|^l_S \ll Q^{-(n+1)}\\
	\text{ for any } \Vert\by-\bx_{\xi_0}\Vert_S \ll Q^{-\frac{n+1}{l}}.
\end{aligned}
 $$  
 Then by (\ref{beta})  and using the same argument as above tells us that for sufficiently large $Q>0$ the ball of radius $\rho(\beta_F)$ centred at $\pi\bx_{\xi_0}$ is contained in $\widetilde{\bV}$. This ultimately gives $\bx_{\xi_0}\in R_F$ . Since 
 $$\Vert\bx-\bx_{\xi_0}\Vert_S\leq (\max \frac{p_\nu}{(p_\nu-1)})(\delta Q^{-(n+1)})^{\frac{1}{l}}$$ 
 so $\bx\in\Delta(R_F,\rho(Q))$ where $\rho(Q)= (\max \frac{p_\nu}{(p_\nu-1)})(\delta Q^{-(n+1)})^{\frac{1}{l}}=\kappa_1Q^{-\frac{n+1}{l}}$.
 Therefore $\bx\in \Delta(R_F,\rho(Q))$ for some $F\in\mathcal{F}_n $ such that $\beta_F\leq Q$ and this completes the proof of the Theorem. 
 \end{proof}
\subsubsection{ Proof of Theorem \ref{thm:nice}}
Now using Theorem \ref{ubiquity} and Lemma \ref{ubi} we can complete the proof of Theorem \ref{thm:nice}.

Fix $\bx_0\in \bU$ and let $\bU_0$ be the neighbourhood of $\bx_0$ which comes from (\ref{ubiquity}). We need to show that 
$$\mathcal{H}^s(\mathcal{W}^\f_{(\Psi,\Theta)}\cap\bU_0)=\mathcal{H}^s(\bU_0)
$$ if the series in (\ref{main sum}) diverges. Consider $\phi(r):=\psi(\kappa_0\inv r)^\frac{1}{l} $. Our first aim is to show that
\begin{equation}
\Lambda(\phi)\subset \mathcal{W}^\f_{(\Psi,\Theta)}.
\end{equation}
Note that $\bx\in \Lambda(\phi)$ implies the existence of infinitely many  $F\in\mathcal{F}_n $ 
such that $\dist(\bx,R_F)<\phi(\beta_F)$. For such $F\in\mathcal{F}_n$ there exists  $\bz\in\bU_0$ such that $(F+\Theta)(\bz)=0$ and $\Vert\bx-\bz\Vert_S<\phi(\beta_F)$. By the mean value theorem,
$$
(F+\Theta)(\bx)=(F+\Theta)(\bz)+ \nabla(F + \Theta)(\bx)\cdot (\bx - \bz) + \sum_{i,j}\Phi_{ij}(F+\Theta)(\star)(x_i - z_i)(x_j-z_j), $$
 where $\star$ comes from the coefficients of $\bx$ and $\bz $. Then we have that 
 \begin{equation}
 |(F+\Theta)(\bx)|_S^l\leq\Vert\bx-\bz\Vert^l_S<\phi(\beta_F)^l=\phi(\kappa_0 \Vert\ba\Vert_\infty)^l=\Psi(\ba).
 \end{equation}
Hence $\Lambda(\phi)\subset \mathcal{W}^\f_{(\Psi,\Theta)} $. Now the Theorem will follow if we can show that
$$\sum_{t=1}^{\infty} \frac{\phi(2^t)^{s-m+l}}{\rho(2^t)^l}=\infty ,$$ where $m=\sum m_\nu$. 
Observe that
$$\sum_{t=1}^{\infty} \frac{\phi(2^t)^{s-m+l}}{\rho(2^t)^l}\asymp \sum_{t=1}^\infty (\psi(\kappa_0\inv 2^t))^{\frac{s-m+l}{l}}\frac{1}{(\rho(2^t))^l}\\
\asymp \sum_{t=1}^\infty (\psi(\kappa_0\inv 2^t))^{\frac{s-m+l}{l}}2^{t(n+1)}$$

As $\psi$ is an approximating function so we got that the above series $$\gg\sum_{t=1}^\infty \sum_{\kappa_0\inv 2^t<\Vert\ba\Vert\leq\kappa_0\inv2^{t+1} }(\psi(\Vert\ba\Vert))^{\frac{s-m+l}{l}}\asymp  \sum_{\ba\in\Z^{n+1}\setminus{0}}(\psi(\Vert\ba\Vert))^{\frac{s-m+l}{l}}\\
$$ $$=\sum_{\ba\in\Z^{n+1}\setminus{0}}\Psi(\ba)^{\frac{s-m+l}{l}}=\infty.
$$
This completes the proof of the Theorem.

\section{Proof of the divergence theorem when $\infty \in S$}
We will state the main theorem of this section for the case where $S$  contains $\infty$. Let $\psi : \mathbb{N} \to \R_{+}$ be a decreasing function. 
\begin{theorem} \label{thm:nice_R}
	Assume $S$ contains $\infty$ and $\f:\bU\subset \prod_{\nu\in S}\Q_{\nu}^{m_\nu}\to \Q_S^n$ is nice and satisfies the standing assumptions (I2) and (I3-ii) and  $s>\sum m_i-l=m-l$. Let $\Theta:\bU\to \Q_S$ be an analytic map satisfying assumption (I5). Let $\Psi(\ba)=\psi(\|\ba\|_\infty) ,\ba\in\Z^{n} $ be an approximating function. Then,
	\begin{equation}\label{main sum_R}
	\mathcal{H}^s(\widetilde{\mathcal{W}^\f_{(\Psi,\Theta)}}\cap\bU)=\mathcal{H}^s(\bU) \text{    if   } \sum (\Psi(\ba))^{\frac{s-m+l}{l}}=\infty,
	\end{equation} where $m=\sum m_\nu$.
\end{theorem}
\noindent 
 We recall Theorems $1.3$ and $1.2$ from \cite{MoS1}.
\begin{theorem}\label{<}
	Let $S$ be as in (I0), $\bU$ be as in (I1), and assume that $\mathbf{f}$ satisfies (I2) and (I3). Then for any
	$\bx=(\bx_{\nu})_{\nu\in S}\in \bU$, one can find a neighborhood
	$\mathbf{V}=\prod V_{\nu}\subseteq \bU$ of $\bx$ and $\alpha_1 >0$ with
	the following property: for any ball $\mathbf{B}\subseteq \mathbf{V}$,
	there exists $E>0$ such that for any choice of $0<\delta\le 1$,
	$T_1,\cdots,T_n\ge 1$, and $K_{\nu}>0$ with $\delta{ (\frac{T_1\cdots
			T_n}{\max_i T_i})}\prod K_{\nu}\le 1$ one has
	\begin{equation}\label{<eqn}\left|\left\{\bx\in\mathbf{B}|\hspace{1mm}\exists\ 
	\ba\in\Z^n\setminus\{0\}:\begin{array}{l}|\langle \ba .\f(\bx) \rangle|^{l}_S<\delta\\
	\|\ba\nabla \f_{\nu}(\bx_\nu)\|<K_{\nu},\hspace{2mm}\nu\in S\\
	|a_i|<T_i, 1 \leq i \leq n\end{array}\right\}\right|\le
	E\hspace{.5mm}\varepsilon_1^{\alpha_1}|\mathbf{B}|,\hspace{5mm}
	\end{equation} 
	where
	$\varepsilon_1=\max\{\delta^\frac{1}{l},(\delta{ (\frac{T_1\cdots
			T_n}{\max_i T_i})}\prod K_{\nu})^{\frac{1}{\l(n+1)}}\},$ where $|S|=l$.
\end{theorem}\noindent

\begin{theorem}\label{>S_R}
	Let's assume $\bU$ and $\f$ satisfy the conditions as in the previous theorem and $0<\varepsilon<\frac{1}{4nl^2}$. For any $\delta>0$ and any ball $\bB\subset \bU$
	\begin{equation}\label{>eqn_R}
	\left|\left\{\bx\in\mathbf{B}|\hspace{1mm}\exists\ 
	\ba=(a_0,\ba_1)\in\Z^{n+1}\setminus\{0\}:\begin{array}{l}\vert a_0+ \ba_1 .\f(\bx)\vert^{l}_S<\delta \prod T_i^{-1}\\
	\|\ba_1\nabla \f_{\nu}(\bx_\nu)\|>\Vert \ba\Vert_\infty^{-\varepsilon},\hspace{2mm}\forall\nu\in S\setminus\infty\\	\|\ba_1\nabla \f_{\infty}(\bx_\infty)\|>\Vert \ba\Vert_\infty^{1-\varepsilon}\\
	\frac{T_i}{2}\leq |a_i|_\infty<T_i, 1 \leq i \leq n\end{array}\right\}\right|\le
	C\hspace{.5mm}\delta|\mathbf{B}|,\hspace{5mm}
	\end{equation} for some universal constant $C$ and large enough $\max{T_i}$.
\end{theorem}
The following Theorem shows that the functions in our set-up are nice. 
\begin{theorem} \label{lemma:nice_infy}
	Assume that $\f:\bU\subset \prod_\nu\Q_{\nu}^{m_\nu}\to \Q_S^n$ satisfies condition (I2) (nondegenerate at $\bx\in \bU$). Then there exists a sufficiently small ball  $\bB_0\subset \bU$ centred at $\bx_0$ and a constant $C>0$ such that for any ball $\bB\subset \bB_0$ and any $1>\delta>0 $, for sufficiently large $Q$, one has 
	\begin{equation}
	|\Phi^{\f}(Q,\delta)\cap \bB |\leq C\delta |\bB|.
	\end{equation}
\end{theorem}
\begin{proof}
	We take $\varepsilon<\frac{1}{4nl^2}$.
	Note that  $\Phi^{\f}(Q,\delta)\cap \bB\subset\Phi_{2}^{\f}(Q,\delta)\cup_{\nu\in S} \Phi_{1,\nu}^{\f}(Q,\delta)$ where, $$\Phi_{1,\nu}^{\f}(Q,\delta):=\left\{\bx\in\mathbf{B}|\hspace{1mm}\exists\ 
	\ba=(a_0,\ba_1)\in\Z^{n+1}\setminus\{0\}:\begin{array}{l}\vert a_0+ \ba_1 .\f(\bx)\vert^{l}_S<\delta Q^{-n}\\
	\|\ba_1\nabla \f_{\nu}(\bx_\nu)\|\leq\Vert \ba\Vert_\infty^{-\varepsilon+k_\nu}\hspace{2mm}\\
	|a_i|_\infty<Q, 0 \leq i \leq n\end{array}\right\},$$
	$k_\nu=0  \text{ if } \nu\in S\setminus\infty, k_\infty=1$
	and
	$$\Phi_{2}^{\f}(Q,\delta):= \left\{\bx\in\mathbf{B}|\hspace{1mm}\exists\ 
	\ba=(a_0,\ba_1)\in\Z^{n+1}\setminus\{0\}:\begin{array}{l}\vert a_0+ \ba_1 .\f(\bx)\vert^{l}_S<\delta Q^{-n}\\
	\|\ba_1\nabla \f_{\nu}(\bx_\nu)\|>\Vert \ba\Vert_\infty^{-\varepsilon},\hspace{2mm}\forall \nu\in S\setminus\infty\\ \|\ba_1\nabla \f_{\infty}(\bx_\infty)\|>\Vert \ba\Vert_\infty^{1-\varepsilon}\\
	|a_i|_\infty<Q, 0 \leq i \leq n\end{array}\right\}.$$
	Applying Theorem \ref{<} with the choice of $K_\nu=\Vert \ba_1\Vert_\infty^{-\varepsilon} ,K_{\nu'}=1 \text{ if } \nu'\neq\nu, K_\infty=\Vert \ba_1\Vert_\infty$ we have that 
	$$|\Phi_{1,\nu}^{\f}(Q,\delta)|\leq E ({\delta Q^{-\varepsilon}})^{\frac{\alpha}{(n+1)l}}|\bB|$$ for $\nu\in S\setminus\infty$, with uniform $E$. In order to compute $|\Phi_{1,\infty}^{\f}(Q,\delta)|$ we apply Theorem \ref{<}  taking $K_\infty=\Vert \ba_1\Vert_\infty^{1-\varepsilon}, K_\nu=1 ~\forall~\nu\in S\setminus\infty$ to get  $$|\Phi_{1,\infty}^{\f}(Q,\delta)|\leq E ({\delta Q^{-\varepsilon}})^{\frac{\alpha}{(n+1)l}}|\bB|.$$ For large enough $Q$,  $E ({\delta Q^{-\varepsilon}})^{\frac{\alpha}{(n+1)l}}\leq C.\delta $. By Theorem \ref{>S_R} we have $$|\Phi_{2}^{\f}(Q,\delta)|\leq C\delta(\sum_{k\geq 0} \frac{1}{2^{kn}} )|\bB|,$$ where $C$ is independent of the ball. Hence the result follows.
\end{proof}
In view of Theorem \ref{lemma:nice_infy}, Theorem \ref{thm:nice_R} implies Theorem \ref{thm:divergence_infty}. Note that condition (I3) implies the nondegeneracy of $\f$ at every point of $\bU$.

\begin{theorem}\label{ubiquity_R}
	Let $\bx_0\in \bU=\prod \bU_\nu$ be such that $\f:\bU\subset \prod \Q_{\nu}^{m_\nu}\to \Q_S^n$ is \textit{nice} at $\bx_0$ and satisfies (I3-ii). Then there is a neighbourhood $\bU_0$ of $\bx_0,$ constants $\kappa_0>0$ and $\kappa_1>0$ and a collection $\cR:=(R_F)_{F\in\mathcal{F}_n}$ of sets $R_F\subset \widetilde{R_F}\cap \bU_0$ such that the system $(\cR,\beta)$ is locally ubiquitous in $\bU_0$ relative to $\rho(r)=\kappa_1r^{-\frac{n}{l}} $ with common dimension $\gamma:=(\sum m_{\nu})-l,$ where
	$$
	\mathcal{F}_n:=\left\{F:\bU\to\Q_S  \left |\begin{array}{l} F(\bx)= a_0+a_1f_1(\bx)+\cdots+a_nf_n(\bx),\\
\tilde\ba=(a_0,\ba)= (a_0,a_1,\cdots,a_n)\in\Z\times\Z^{n}\setminus\{\mathbf{0}\} \end{array}\right. \right \}	$$ 
	and given $F\in\mathcal{F}_n$ we consider semi resonant sets
	\begin{equation}
	\widetilde{R_F}:=\{\bx\in\bU:\ (F+\Theta)(\bx) \ =\ 0\}
	\end{equation}
	and $$
	\beta:\ \mathcal{F}_n\to \R^+\ : F\to \ \beta_F=\kappa_0|(a_1,\cdots,a_n)|=\kappa_0\Vert\ba\Vert_\infty.
	$$
\end{theorem}
\begin{proof}
	Let $\pi_\nu:\Q_\nu^{m_\nu}\to\Q_{\nu}^{m_\nu-1}$ be the projection map given by $$\pi_\nu(x^\nu_1,x^\nu_2,\cdots,x^\nu_m)=(x^\nu_2,\cdots,x^\nu_m),$$ and $\pi=(\pi_\nu):\prod\Q_\nu^{m_\nu} \to \prod\Q_\nu^{m_\nu-1}$
	and let 
	\begin{equation}
	\widetilde\bV:=\pi(\widetilde R_F\cap\bU_0),
	\\
	\bV=\bigcup_{3\rho(\beta_F)-\text{balls} B\subset \widetilde{\bV}}\frac{1}{2}B
	\end{equation}
	and
	\begin{equation}
	R_F=\left\{\begin{array}{l}
	
	\pi\inv(\bV)\cap\widetilde{R_F}  \ \text{if} \ \ |\partial_1(F^\nu+\Theta^\nu)(\bx_\nu)|_\nu> \lambda_\nu|\nabla(F^{\nu}+\Theta^\nu)(\bx_\nu)|_\nu \ \forall \nu\in S, \ \forall \ \bx=(\bx_\nu)\in \bU_0\\
	
	\emptyset  \ \ \ \ \ \ \ \ \ \ \ \ \ \ \ \  \text{otherwise}.
	\end{array}\right 
	.\end{equation} 
	where $0<\lambda_\nu<1$ is fixed for each $\nu\in S$. \\
	 We claim that the $R_F$ are resonant sets. The intersection property is satisfied for the same reason as it is satisfied in the case $\infty \notin S$. It remains to check the covering property (\ref{coveringproperty}) to establish ubiquity. Without loss of generality we will assume that the ball $\bU_0$ in the definition of (\ref{nice}) satisfies
	\begin{equation}
	\diam(\bU_0)\leq\min_{\nu\neq\infty} (\frac{1}{p_\nu}, \frac{\delta}{(n+1)2nm_\infty\prod_{\nu\in S\setminus\infty}p_\nu^{n+2}})
	\end{equation} where $\delta$ is the one coming in the definition \ref{nice}.
	From the Definition \ref{nice} of  $\f$ being nice at $\bx_0,$ there exist fixed $0<\delta,w<1$ such that for any arbitrary ball $\bB\subset\bU_0,$ 
	\begin{equation}
	\limsup_{Q\to \infty}|\Phi^{\f}(Q,\delta)\cap \frac{1}{2}\bB |\leq w|\frac{1}{2}\bB|.
	\end{equation}
	So for sufficiently large $Q$ we have that 
	$$
	|\frac{1}{2}\bB\setminus \Phi^{\f}(Q,\delta)|\geq \frac{1}{2}(1-w)|\frac{1}{2}\bB|=2^{-(\sum (m_\nu)+1)}(1-w)|\bB|. 
	$$
	Therefore it is enough to show that 
	\begin{equation}
	\frac{1}{2}\bB\setminus \Phi^{\f}(Q,\delta)\subset\bigcup _{F\in\mathcal{F}_n\\
		\beta_F\leq Q}\Delta(R_F,\rho(Q))\cap\bB.
	\end{equation}
	Suppose $\bx\in \frac{1}{2}\bB\setminus \Phi^{\f}(Q,\delta).$ Consider the lattice
	\begin{equation}
	\Gamma_{\bx}=\left\{(a_0,a_1,\cdots,a_n)\in\Z^{n+1}: \begin{array}{l}|a_0+a_1f^\nu_1(\bx)+\cdots+a_nf^\nu_n(\bx)|^l_{\nu}<\delta Q^{-(n)} ~\forall \nu\in S\setminus\infty,\\
	|a_i|_\nu\leq\frac{1}{p_\nu} \ \forall \nu\in S\setminus\infty, \ \forall \ {1\leq i\leq n}\end{array}\right\},
	\end{equation}
	and the convex set  
	\begin{equation}
	K_{\bx}=\left\{(y_0,\by)=(y_0,y_1,\cdots,y_n)\in\R^{n+1}~|~\begin{array}{l}
	\vert y_0+\by\cdot\f^\infty(\bx)\vert_\infty^l<\delta Q^{-n},\\
	\Vert \by\Vert_\infty\leq Q
	\end{array}\right\}.
	\end{equation} 
	
	By Lemma \ref{covolume} we have that 
	\begin{eqnarray*}
\Vol(\R^{n+1}/\Gamma_\bx) & = & \prod_{\nu\in S\setminus\infty} p_\nu^n.p_\nu^{-[\log_{p_\nu}(\delta Q^{-n})^\frac{1}{l}]} \\
    & \leq & \prod_{\nu\in S\setminus\infty} p_\nu^{n}\frac{1}{\prod_{\nu\in S\setminus\infty} p_\nu^{\log_{p_\nu}{(\delta Q^{-n})^\frac{1}{l}-1}}} \\
    &\leq & \left(\frac{Q^n}{\delta}\right)^{\frac{l-1}{l}}.\prod_{\nu\in S\setminus\infty} p_\nu^{(n+1)}.
  \end{eqnarray*}
	Also note that $\Vol(K_\bx)=2^{n+1}(\delta Q^{-n})^{\frac{1}{l}}.Q^n=2^{n+1}\delta^{\frac{1}{l}}Q^{\frac{n(l-1)}{l}}$.
	Using the fact that $\bx\notin \Phi^{\f}(Q,\delta) $ we get the first minima $\lambda_1=\lambda_1(\Gamma_{\bx},K_\bx)>1$. Therefore using Minkowski's theorem on successive minima, we have that $$
	2^{n+1}\delta^{\frac{1}{l}}Q^{\frac{n(l-1)}{l}}\lambda_1.\lambda_2.\cdots.\lambda_{n+1}\leq 2^{n+1}\Vol(\R^{n+1}/\Gamma_{\bx})\leq 2^{n+1}\leq\big(\frac{Q^n}{\delta}\big)^{\frac{l-1}{l}}.\prod_{\nu\in S\setminus\infty} p_\nu^{(n+1)}.
	$$
	This implies that $\lambda_{n+1}\leq \frac{\prod_{\nu\in S\setminus\infty} p_\nu^{n+1}}{\delta}.$ By the definition of $\lambda_{n+1}$ we get $n+1$ linearly independent integer vectors $\ba_j=(a_{j,0},\cdots,a_{j,n})\in\Z^{n+1}(0\leq j\leq n)$ such that the functions $F_j$ given by $$
	F_j(\by)=a_{j,0}+a_{j,1}f_1(\by)+\cdots+a_{j,n}f_n(\by)
	$$ satisfy 
	\begin{equation} \label{conditions_R}
	\left\{ \begin{array}{l}
	|F^\infty_j(\bx)|^l_\infty<\big(\frac{\prod_{\nu\in S\setminus\infty} p_\nu^{n+1}}{\delta}\big)^l\delta Q^{-n}\\
	|F^\nu_j(\bx)|^l_\nu<\delta Q^{-n}\\
	|a_{j,i}|_\infty\leq Q.\frac{\prod_{\nu\in S\setminus\infty} p_\nu^{n+1}}{\delta}\\
	|a_{j,i}|_\nu\leq\frac{1}{p_\nu} \text{ for } 0\leq j \leq n, 1\leq i\leq n\ \forall \nu\in S\setminus \infty.
	\end{array}\right.
	\end{equation} 
	Now consider the following system of linear equations for each $\nu\in S$,
	\begin{equation} \label{linear_R}
	\begin{array}{l}
	\eta_0F_0(\bx)+\eta_1 F_1(\bx)+\cdots+\eta_n F_n(\bx)+\Theta(\bx)=0\\
	\eta_0\partial_1F_0(\bx)+\eta_1\partial_1F_1(\bx)+\cdots+\eta_n\partial_1F_n(\bx)+\partial_1\Theta(\bx)=D\\
	\eta_0 a_{0,j}+\cdots+\eta_n a_{n,j}=0 \ \  (2\leq j \leq n),
	\end{array}
	\end{equation} where $D\in\Q_S$, $$
	\begin{array}{l}
	D_\infty=Q+(\prod_{\nu\in S\setminus\infty}p_\nu)\sum_{i=0}^n\vert \partial_1 F_i^\infty(\bx)\vert_\infty
	\\ D_\nu=1, \forall \nu\in S\setminus\infty.
	\end{array}$$
	Since $f_1(\bx)=x_1 $, the determinant of this aforementioned  system is $\det(a_{j,i})\neq 0$. Therefore there exists a unique solution to the system, say $(\eta_0,\eta_1,\cdots,\eta_n)\in \Q_S^n$. Using the strong approximation Theorem \ref{Strong} for $i=0,\cdots,n$ we get $r_i\in\Q$ such that
	\begin{equation} \label{r_i_R}
	\begin{aligned}
	& |r_i-\eta_i^\infty|_\infty\leq\prod_{\nu\in S\setminus\infty} p_\nu\\
	&
	|r_i-\eta^\nu_i|_\nu\leq 1 \ \forall\nu\in S\setminus\infty,\\
	& 
	|r_i|_{\nu}\leq 1
	\quad \text{for all} \ \text{ prime }\nu\notin S\setminus\infty.
	\end{aligned}
	\end{equation}
	Now take the function 
	\begin{equation}\begin{aligned}
	F(\by)=r_0F_0(\by)+r_1F_1(\by)+\cdots+r_nF_n(\by)\\
	=a_0+a_1f_1(\by)+\cdots+a_nf_n(\by),
	\end{aligned}
	\end{equation} where 
	\begin{equation} \label{a_i_R}
	a_i=r_0a_{0,i}+r_1a_{1,i}+\cdots+r_na_{n,i},
	\  \forall \ i=0,\cdots,n.
	\end{equation}
	
	We claim that\\
	
	\textbf{Claim $1$.}The $a_i$s are all integers.\\ 
	
	From (\ref{r_i_R}) and (\ref{a_i_R}) we get 
	\begin{equation}\label{claim1.1_R}
	|a_i|_\nu\leq 1, \ \forall \ \ {i}=0,\cdots,n \text{ for } \nu\notin S\setminus\infty
	\end{equation}
	and by (\ref{r_i_R}), (\ref{linear_R}) and (\ref{conditions_R}) we have 
	\begin{equation}\begin{aligned}
	|a_i|_\nu\leq \max_{j=0,\cdots,n} \{|\eta^\nu_j-r_j|_\nu|a_{j,i}|_\nu\}\leq 1 \ \forall \nu\in S\setminus\infty
	\quad \text{and for } i=2,\cdots,n.
	\end{aligned}
	\end{equation} 
	Therefore the $a_i's$ are all integers for $i=2,\cdots,n$.
	Now note that $$
	F(\bx)+\Theta(\bx)\\
	=(r_0-\eta_0)F_0(\bx)+\cdots+(r_n-\eta_n)F_n(\bx).
	$$ 
	We denote $F=(F^\nu)$ and $\Theta=(\Theta^\nu) $ and consider $\Q\subset \Q_S$, diagonally embedded. Then we have \begin{equation}\label{condition1}
	|(F+\theta)(\bx)|^l_S\leq T.\delta Q^{-n},
	\end{equation} where $T=\max\big((n+1)^l\left(\frac{\prod_{\nu\in S\setminus\infty} p_\nu^{n+2}}{\delta}\right)^l{\delta}, \delta)$.
	Again 
	$$
	\partial_1(F+\Theta)^\nu(\bx_\nu)=(r_0-\eta^\nu_0)\partial_1F_0(\bx_\nu)+\cdots+(r_n-\eta^\nu_n)\partial_1F_n(\bx_\nu)+D_\nu ~\forall~\nu\in S.
	$$ 
	Since $\forall~\nu\in S\setminus\infty $, 
	$1\leq i\leq n$ and 
	$0\leq j\leq n$, we have $|a_{j,i}|_\nu\leq\frac{1}{p_\nu}$ so that $$|\partial_1F^\nu_j(\bx_\nu)|_\nu\leq \frac{1}{p_\nu}$$ and thus by (\ref{r_i_R}) we get 
	\begin{equation}\label{partial_condition_R}
	1-\frac{1}{p_\nu}\leq|\partial_1(F^\nu+\Theta^\nu)(\bx_\nu)|_\nu\leq 1.
	\end{equation} 
	Also, 
	\begin{equation}\label{partial_condition_R_infty}
	|\partial_1(F+\Theta)^\infty(\bx_\infty)|_\infty\leq C_2\cdot Q,
	\end{equation} where $C_2$ does not depend on $\bx$.
	Now we can show that $a_1$ and $a_0$ are also integers. Since $f_1(\by)=y_1,$ we have 
	\begin{equation}\label{a_1_R}
	a_1=\partial_1(F+\Theta)(\bx)-\partial_1\Theta(\bx)-\sum_{j =2}^{n}a_j\partial_1f_j(\bx)
	\end{equation}
	which implies that $|a_1|_\nu\leq 1 \ \forall~\nu\in S\setminus\infty$. This together with (\ref{claim1.1_R}) proves that  $a_1$ is an integer. We similarly prove that $a_0$ is an integer. We can write
	\begin{equation}\begin{aligned} \label{a_0_R}
	a_0=(F+\Theta)(\bx)-\Theta(\bx)-\sum_{j =1}^{n}a_jf_j(\bx).
	\end{aligned}
	\end{equation}
	This implies that $|a_0|_\nu\leq 1\ \forall~\nu\in S\setminus\infty$ and thus by (\ref{a_0_R}) and (\ref{claim1.1_R}) we get that $a_0$ is integer. So the first claim is proved.
	
	Now we look at the infinity norm of the integers $a_i$.
	By (\ref{a_i_R}), (\ref{linear_R}) and (\ref{r_i_R}) we have 
	\begin{equation}\label{a_infty_R}
	\begin{aligned}
	|a_i|_\infty\leq|(r_0-\eta_0^\infty) a_{0,i}+\cdots+(r_n-\eta_n^\infty)a_{n,i}|_\infty\\
	\leq (n+1)Q.\frac{\prod_{\nu\in S\setminus\infty} p_\nu^{n+2}}{\delta}=C_3\cdot Q
	\end{aligned}
	\quad \text{ for } i=2,\cdots,n.
	\end{equation}
From (\ref{a_1_R}), (\ref{partial_condition_R_infty}) and (\ref{a_infty_R}) we have 
\begin{equation} \label{upperbounda_1}
\vert a_1\vert_\infty\leq C_4\cdot Q\end{equation}
 where $C_4$ does not depend on the point $\bx$.
Also, \begin{equation}
\begin{aligned}
\partial_1(F+\Theta)^\infty(\bx_\infty)= Q+\prod_{\nu\in S\setminus\infty}p_\nu\sum_{k=0}^n\vert\partial_1 F_k^\infty(\bx_\infty)\vert+\sum_{k=0}^n(r_k-\eta_k^\infty)\partial_1F_k^\infty(\bx_\infty)\\
\implies Q\leq \vert\partial_1(F+\Theta)^\infty(\bx_\infty)\vert.
\end{aligned}
\end{equation}
Thus from (\ref{a_1_R}) we have that there exists some $C_5$, independent of $\bx$, such that for sufficiently large $Q$, 
\begin{equation}\label{lowerboundona}
C_5\cdot Q\leq \Vert\ba\Vert_\infty.
\end{equation}
	So by (\ref{a_infty_R}), (\ref{lowerboundona}) and (\ref{upperbounda_1}) we get 
	\begin{equation} \label{beta_R}
\frac{C_5}{\max{(C_3,C_4)}}.Q<\beta_F=\frac{1}{ \max(C_3,C_4)} \Vert\ba\Vert_\infty\leq Q,\end{equation} 
	here $\kappa_0=\frac{1}{\max{(C_3, C_4)}}$, which depends on $\delta$, $n$ and the set $S\setminus\infty$.
	Note that  for all $\by\in\bU_0$ we have 
	\begin{equation}
	\partial_1(F+\Theta)(\bx)=\partial_1(F+\Theta)(\by)+\sum\Phi_{j1}(\partial_1(F+\Theta))(\star)(x_j-y_j)
	\end{equation}
	where $\star$ is from the coefficients of $\bx$ and $\by$. By using (\ref{partial_condition_R}) and by the fact that $$\diam(\bU_0)\leq\min_{\nu\neq\infty} (\frac{1}{p_\nu}, \frac{\delta}{(n+1)2nm_\infty\prod_{\nu\in S\setminus\infty}p_\nu^{n+2}}),$$ we have
	\begin{equation}
	|\partial_1(F^\nu+\Theta^\nu)(\by_\nu)|_\nu\geq 1-\frac{2}{p_\nu} \ \ \forall \  \by_\nu\in\bU^\nu_0 \ \forall~\nu\in S\setminus \infty
	\end{equation}
	and
	\begin{equation}
	|\partial_1(F^\infty+\Theta^\infty)(\by_\infty)|_\infty\geq\frac{1}{2}Q.
	\end{equation}
\noindent	
	So $F$ satisfies $|\partial_1(F^\nu+\Theta^\nu)(\by_\nu)|_\nu> \lambda_\nu|\nabla(F^\nu+\Theta^\nu)(\by_\nu)|_\nu \ \forall \ \by=(\by_\nu)\in \prod_{\nu\in S}\bU^\nu_0=\bU_0$ and thus by construction, $\Delta(R_F,\rho(Q))\neq \emptyset$.\\ 
	
	\textbf{Claim $2$.} $\bx \in \Delta(R_F,\rho(Q))$.\\
	
	We set $r_0 := \diam(\bB)$ and define the function 
	$$g^\nu(\xi^\nu) :=(F^\nu+\Theta^\nu)(x^\nu_1+\xi^\nu,x_2,\cdots,x^\nu_{m_\nu}), \text { where } |\xi|_S=|(\xi^\nu)|_S<r_0$$ and for all $\nu\in S$.
	Then \begin{equation}
	\begin{aligned}
	|g^\nu(0)|_\nu=|(F^\nu+\Theta^\nu)(\bx_\nu)|^l_\nu<T\delta Q^{-n} ~\forall~\nu \in S \\
	\text{ and } |{g^\nu}'(0)|_\nu=|\partial_1(F^\nu+\Theta^\nu)(\bx_\nu)|_\nu>1-\frac{1}{p_\nu}~ \forall~\nu\in S\setminus \infty,\\ \vert{g^\infty}'(0)\vert_\infty=|\partial_1(F^\infty+\Theta^\infty)(\bx_\infty)|_\infty>Q.
	\end{aligned}
	\end{equation} 
	Now applying Newton's method, there exists $\xi^\nu_0$ such that $g^\nu(\xi^\nu_0)=0$ and $|\xi^\nu_0|_\nu<\frac{p_\nu}{(p_\nu-1)}(T\delta Q^{-n})^{\frac{1}{l}}$ for $\nu\in S\setminus\infty$ and by the Lemma 6 of \cite{BaBeVe}, there exists $\xi_0^\infty$ such that  $g^\infty(\xi^\infty_0)=0$ and $\vert\xi^\infty_0\vert_\infty<(T\delta Q^{-n})^\frac{1}{l}Q^{-1}$. For sufficiently large $Q$ we get $\bx_{\xi_0}=(x_1+\xi_0,x_1,\cdots,x_n)\in \bB,$ that $(F+\Theta)(\bx_{\xi_0})=0$ and that $$\Vert\bx-\bx_{\xi_0}\Vert_S\leq\max(\left(\max_{\nu\in S\setminus \infty} \frac{p_\nu}{(p_\nu-1)}\right)(T\delta Q^{-n})^{\frac{1}{l}}, (T\delta Q^{-n})^\frac{1}{l}Q^{-1}).$$ Now we argue exactly as in \cite{BaBeVe}. We recall the argument for the sake of completeness. By  the Mean Value Theorem we have
	$$\begin{aligned}
	|(F+\Theta)(\by)|^l_S \ll Q^{-n}\\
	\text{ for any } \Vert\by-\bx_{\xi_0}\Vert_S \ll Q^{-\frac{n}{l}}.
	\end{aligned}
	$$  
	Then by (\ref{beta_R})  and using the same argument as above tells us that for sufficiently large $Q>0$ the ball of radius $\rho(\beta_F)$ centred at $\pi\bx_{\xi_0}$ is contained in $\widetilde{\bV}$. This ultimately gives $\bx_{\xi_0}\in R_F$ . Since 
	$$\Vert\bx-\bx_{\xi_0}\Vert_S\leq \left(\max_{\nu\in S\setminus\infty} \frac{p_\nu}{(p_\nu-1)}\right)(T\delta Q^{-n})^{\frac{1}{l}}$$ 
	so $\bx\in\Delta(R_F,\rho(Q))$ where $\rho(Q)= \left(\max \frac{p_\nu}{(p_\nu-1)}\right)(T\delta Q^{-n})^{\frac{1}{l}}=\kappa_1Q^{-\frac{n}{l}}$.
	Therefore $\bx\in \Delta(R_F,\rho(Q))$ for some $F\in\mathcal{F}_n $ such that $\beta_F\leq Q$ and this completes the proof of the Theorem. 
\end{proof}
\subsubsection{ Proof of the main divergence theorem}
Using Theorem \ref{ubiquity_R} and Lemma \ref{ubi} we can complete the proof of Theorem \ref{thm:nice_R}.

Fix $\bx_0\in \bU$ and let $\bU_0$ be the neighbourhood of $\bx_0$ which comes from (\ref{ubiquity}). We need to show that 
$$\mathcal{H}^s(\mathcal{W}^\f_{(\Psi,\Theta)}\cap\bU_0)=\mathcal{H}^s(\bU_0)
$$ if the series in (\ref{main sum}) diverges. Consider $\phi(r):=\psi(\kappa_0\inv r)^\frac{1}{l} $. Our first aim is to show that
\begin{equation}
\Lambda(\phi)\subset \mathcal{W}^\f_{(\Psi,\Theta)}.
\end{equation}
Note that $\bx\in \Lambda(\phi)$ implies the existence of infinitely many  $F\in\mathcal{F}_n $ 
such that $\dist(\bx,R_F)<\phi(\beta_F)$. For such $F\in\mathcal{F}_n$ there exists  $\bz\in\bU_0$ such that $(F+\Theta)(\bz)=0$ and $\Vert\bx-\bz\Vert_S<\phi(\beta_F)$. By the mean value theorem for all $\nu\in S\setminus\infty$,
$$
(F+\Theta)^\nu(\bx_\nu)=(F+\Theta)^\nu(\bz_\nu)+ \nabla(F + \Theta)^\nu(\bx_\nu)\cdot (\bx_\nu - \bz_\nu) + \sum_{i,j}\Phi_{ij}(F+\Theta)^\nu(\star)(x_i - z_i)_\nu(x_j-z_j)_\nu, $$
where $\star$ comes from the coefficients of $\bx_\nu$ and $\bz_\nu $. Then we have that 
\begin{equation}
|(F+\Theta)^\nu(\bx_\nu)|_\nu^l\leq\Vert\bx_\nu-\bz_\nu\Vert^l_\nu<\phi(\beta_F)^l=\phi(\kappa_0 \Vert\ba\Vert_\infty)^l=\Psi(\ba) ~\forall~\nu\in S\setminus \infty.
\end{equation}
Similarly for archimedean part we use the mean value theorem and get,
\begin{equation}\begin{aligned}
\vert (F+\Theta)^\infty(\bx_\infty)\vert_\infty= \vert \sum_{i=1}^{m_\infty} \partial_i (F+\Theta)^\infty(\star)(x_i-z_i)_\infty\vert_\infty\\
\leq \Vert \bx_\infty-\bz_\infty\Vert_\infty.\vert \sum_{i=1}^{m_\infty}\partial_i(\sum_{j=1}^n a_jf_j^\infty+\Theta^\infty)(\star)\vert_\infty\\
\leq 2n \Vert \bx_\infty-\bz_\infty\Vert_\infty  \Vert\ba\Vert_\infty\\
\leq \phi(\beta_F)\Vert \ba\Vert_\infty.
\end{aligned}
\end{equation}
Hence $\vert (F+\Theta)^\infty(\bx_\infty)\vert_\infty^l\leq 2n.\Psi(\ba)\Vert\ba\Vert_\infty$, and so
$\Lambda(\phi)\subset \mathcal{W}^\f_{(\Psi,\Theta)} $. Now the Theorem will follow if we can show that
$$\sum_{t=1}^{\infty} \frac{\phi(2^t)^{s-m+l}}{\rho(2^t)^l}=\infty ,$$ where $m=\sum m_\nu$. 
Observe that
$$\sum_{t=1}^{\infty} \frac{\phi(2^t)^{s-m+l}}{\rho(2^t)^l}\asymp \sum_{t=1}^\infty (\psi(\kappa_0\inv 2^t))^{\frac{s-m+l}{l}}\frac{1}{(\rho(2^t))^l}\\
\asymp \sum_{t=1}^\infty (\psi(\kappa_0\inv 2^t))^{\frac{s-m+l}{l}}2^{tn}$$

As $\psi$ is an approximating function so we got that the above series $$\gg\sum_{t=1}^\infty \sum_{\kappa_0\inv 2^t<\Vert\ba\Vert\leq\kappa_0\inv2^{t+1} }(\psi(\Vert\ba\Vert))^{\frac{s-m+l}{l}}\asymp  \sum_{\ba\in\Z^{n}\setminus{0}}(\psi(\Vert\ba\Vert))^{\frac{s-m+l}{l}}\\
$$ $$=\sum_{\ba\in\Z^{n}\setminus{0}}\Psi(\ba)^{\frac{s-m+l}{l}}=\infty.
$$
This completes the proof of the Theorem.

\section{$(C, \alpha)$-good functions}
In this section, we recall the notion of $(C, \alpha)$-good functions on ultrametric spaces. We follow the treatment of Kleinbock and Tomanov \cite{KT}. Let $X$ be a metric space, $\mu$ a Borel measure on $X$ and let $(F, |\cdot|)$ be a local field. For a subset $U$ of $X$ and $C, \alpha > 0$, say that a Borel measurable function $f : U \to F$ is $(C, \alpha)$-good on $U$ with respect to $\mu$ if for any open ball $B \subset U$ centred in $\supp \mu$ and $\varepsilon > 0$ one has
\begin{equation}\label{gooddef}
\mu \left(\{ x \in B \big| |f(x)| < \varepsilon \} \right) \leq
C\left(\displaystyle \frac{\varepsilon}{\sup_{x \in
B}|f(x)|}\right)^{\al}|B|,
\end{equation}
 The following elementary properties of $(C,
\al)$-good functions will be used.
\begin{enumerate}
\item[(G1)] If $f$ is $(C,\al)$-good on an open set $V$, so
is $\lambda
f~\forall~\lambda \in
F$;
\item[(G2)] If $f_i, i \in I$ are $(C,\al)$-good on $V$, so
is $\sup_{i \in
I}|f_i|$;
\item[(G3)] If $f$ is $(C,\al)$-good on $V$ and for some
$c_1,c_2\,\textgreater \,0,\, c_1\leq
\frac{|f(x)|}{|g(x)|}\leq c_2
\text{ for all }x \in V$, then g is
$(C(c_2/c_1)^{\al},\al)$-good on $V$.
\item[(G4)] If $f$ is $(C,\al)$-good on $V$, it is
$(C',\alpha')$-good
on $V'$ for every $C' \geq \max\{C,1\}$, $\alpha' \leq \alpha$ and $V'\subset V$.
\end{enumerate}
One can note that from (G2), it follows that the supremum
norm of a vector valued function $\f$ is $(C,\al)$-good
whenever each of its components is $(C,\al)$-good.
Furthermore, in view of (G3), we can replace the norm by an
equivalent one, only affecting
$C$ but not $\al$.

Polynomials in $d$ variables of degree at most $k$ defined on local fields can be seen to be $(C, 1/dk)$-good, with $C$ depending only on $d$ and $k$ using Lagrange interpolation. In \cite{KM}, \cite{BKM} and \cite{KT} (for ultrametric fields), this property was extended to smooth functions satisfying certain properties. We rapidly recall, following \cite{S} (see also \cite{KT}), the definition of smooth functions in the ultrametric case. Let $U$ be a non-empty subset of $X$ without isolated points. For $n \in \mathbb{N}$, define
$$\nabla^{n}(U) = \{(x_1,\dots,x_n) \in U, x_i \neq x_j \text{ for } i \neq j   \}.$$
The $n$-th order difference quotient of a function $f : U \to X$
is the function  $\Phi_n(f) $ defined inductively by $\Phi_0 (f) =
f$ and, for $n \in \N$,  and $(x_1,\dots,x_{n+1}) \in \nabla^n(U)$
by
\[ \Phi_{n}f(x_1,\dots,x_{n+1}) = \frac{\Phi_{n-1}f(x_1,x_3,\dots,x_{n+1}) -
 \Phi_{n-1}f(x_2,\dots,x_{n+1})}{x_1-x_2}. \]
This definition does not depend on the choice of variables,
as all difference quotients are symmetric functions. A function $f$
on $X$ is called a $C^n$ function if $\Phi_n f$ can be extended to
a continuous function $\bar{\Phi}_{n}f : U^{n+1} \to X $. We also set
\[ D_n f(a) = \overline{\Phi}_nf(a,\dots,a),~a \in U. \]
We have the following theorem (c.f. \cite{S}, Theorem $29.5$).
\begin{theorem}\label{derivative}
Let $f \in C^{n}(U \to X)$. Then, $f$ is $n$ times differentiable
and
\[ j!D_j f = f^j  \]
for all $1 \leq j \leq n$.
\end{theorem}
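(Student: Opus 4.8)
The plan is to reduce to the one-variable case (which is classical: it is precisely Theorem 29.5 of \cite{S}) and to emphasize how the difference-quotient recursion encodes the Taylor expansion. Recall that $f \in C^n(U \to X)$ means the $n$-th difference quotient $\Phi_n f$ extends continuously to all of $U^{n+1}$, and $D_j f(a) = \overline{\Phi_j} f(a,\dots,a)$. The assertion $j! D_j f = f^{(j)}$ for $1 \le j \le n$ is a statement about iterated ordinary derivatives, so it suffices to work along the line spanned by a single variable, i.e.\ it is genuinely a one-variable fact.

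First I would set up the base case $j=1$: by definition $\Phi_1 f(x_1, x_2) = \frac{f(x_1) - f(x_2)}{x_1 - x_2}$, and continuity of its extension $\overline{\Phi_1} f$ at the diagonal point $(a,a)$ is exactly the statement that $f$ is differentiable at $a$ with $f'(a) = \overline{\Phi_1} f(a,a) = D_1 f(a)$; this handles $j=1$ with the trivial factor $1! = 1$. Next, for the inductive step, I would show that $\Phi_{n-1} f$, viewed as a function of its first variable with the remaining variables fixed near the diagonal, is itself $C^1$, with derivative given by $\Phi_n f$ evaluated appropriately; more precisely one uses the symmetry of difference quotients together with the defining recursion
\[
\Phi_n f(x_1,\dots,x_{n+1}) = \frac{\Phi_{n-1} f(x_1, x_3, \dots, x_{n+1}) - \Phi_{n-1} f(x_2, x_3, \dots, x_{n+1})}{x_1 - x_2}
\]
to see that $\Phi_{n-1} f(\,\cdot\,, x_3, \dots, x_{n+1})$ has difference quotient $\Phi_n f$, hence (by the $j=1$ argument applied to this function) is differentiable in its first slot with value $\overline{\Phi_n} f$ on the diagonal. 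Iterating this observation $j$ times along the diagonal shows $f$ is $j$ times differentiable and produces a combinatorial factor: each differentiation of a difference quotient of order $k$ into one of order $k+1$ contributes a factor accounting for the number of ways the evaluation points collapse, and bookkeeping these contributions yields exactly $j!$, giving $f^{(j)} = j!\, D_j f$. The cleanest way to organize the bookkeeping is to prove by induction on $j$ the Taylor-type identity
\[
f(x) = \sum_{i=0}^{j-1} D_i f(a)\,(x-a)^i + \overline{\Phi_j} f(x, a, \dots, a)\,(x-a)^j,
\]
which follows directly by telescoping the difference-quotient recursion, and then differentiate this $j$ times at $x = a$, using continuity of $\overline{\Phi_j} f$ to control the remainder term; the coefficient of $(x-a)^j$ in an ordinary Taylor expansion is $f^{(j)}(a)/j!$, while here it is $D_j f(a)$, forcing $f^{(j)}(a) = j!\, D_j f(a)$.

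The main obstacle is the remainder-term analysis in the non-archimedean (or general metric) setting: one must verify that the term $\overline{\Phi_j} f(x, a, \dots, a)(x-a)^j$ genuinely behaves like $o((x-a)^{j-1})$ and contributes only to the top-order coefficient, which requires the continuity of the full extension $\overline{\Phi_j} f$ rather than mere existence of $\Phi_j f$ off the diagonal --- this is exactly where the hypothesis $f \in C^n$ (as opposed to $n$-times differentiable) is used, and it is the point at which the ultrametric case differs in spirit from the real case even though the formal manipulations are identical. Since this is precisely the content of \cite[Theorem 29.5]{S}, I would ultimately cite that result for the one-variable statement and only spell out the reduction, noting that $X$ being a metric space (rather than a field) causes no difficulty because every identity above is an identity between points of $X$ obtained by the field operations on the scalar variables in $U \subset F^d$.
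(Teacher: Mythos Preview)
The paper does not give a proof of this theorem at all: it is stated with the attribution ``(c.f.\ \cite{S}, Theorem 29.5)'' and then used as a black box. Your proposal is therefore not competing with any argument in the paper; it is a sketch of Schikhof's proof, and you correctly identify the source and plan to cite it. The sketch is broadly correct---the Taylor-type identity you write down is precisely the local Taylor formula in \cite{S}, and the base case and recursion are the right ingredients---so there is nothing to object to on mathematical grounds.

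One small remark: your final sentence about ``$X$ being a metric space (rather than a field)'' is slightly off. In the paper's setup $U$ is a subset of a valued field and $f$ maps back into the same field (otherwise the difference quotient $\tfrac{f(x_1)-f(x_2)}{x_1-x_2}$ would not make sense), so there is no reduction to perform here; the statement is already purely one-variable over a non-archimedean field, exactly as in Schikhof. Since the paper only cites the result, the appropriate write-up is simply to invoke \cite[Theorem~29.5]{S}, which is what both you and the paper effectively do.
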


To define $C^{k}$ functions in several variables, we follow the notation set
forth in \cite{KT}. Consider a multiindex $\beta =
(i_1,\dots,i_d)$ and let
\[ \Phi_{\beta}f = \Phi^{i_1}_{1}\circ \dots \circ \Phi^{i_d}_{d} f.  \]
This difference order quotient is defined on the set $
\nabla^{i_1}U_1 \times \dots \times \nabla^{i_d}U_d$ and the $U_i$
are all non-empty subsets of $X$ without isolated points. A
function $f$ will then be said to belong to $C^{k}(U_1\times \dots
\times U_d)$ if for any multiindex $\beta$ with $|\beta| = \sum_{j =
1}^{d} i_j \leq k$, $\Phi_{\beta} f$ extends to a continuous
function $\bar{\Phi}_{\beta}f : U_{1}^{i_1 + 1} \times \dots \times
U_{d}^{i_d + 1}$. We then have
\begin{equation}\label{multivanish}
\partial_{\beta}f(x_1,\dots,x_d) = \beta!
\bar{\Phi}_{\beta}(x_1,\dots,x_1,\dots,x_d,\dots,x_d)
\end{equation}
where $\beta ! = \prod_{j = 1}^{d} i_{j}!$.\\

We are now ready to gather the results on ultrametric $(C, \alpha)$-good functions that we need. We begin with Theorem $4.1$ from \cite{KT}.
\begin{theorem}\label{theorem 3.2} 
	            Let $V_1,V_2,\cdots,V_3$ be nonempty open sets in F, ultrametric field. Let $ k\in \N$, $A_1,\cdots,A_d> 0 $ and $ f\in C^k(V_1\times\cdots,\times V_n) $ be such that 
	            \begin{equation}\label{eqn 3.3}
	            |\Phi_j^kf|\equiv A_j \text{ on } \nabla^{k+1}V_j\times\prod_{i\neq j}V_i , j=1,\cdots,d.
	            \end{equation}
	            Then f is $(dk^{3-\frac{1}{k}},\frac{1}{dk})$-good on $V_1\times\cdots,\times V_n$
	            \end{theorem}
 The following is an ultrametric analogue of Proposition 1 from \cite{BaBeVe}. This constitutes a generalisation of Theorem 4.5 in \cite{MoS1}. We omit the proof which is a straightforward adaptation of the proof in \cite{BaBeVe}.
  We set $\mathcal{O}=\Z[\frac{1}{p}]$, where $p$ is prime.
               \begin{proposition}\label{Calpha_Prop}
              	Let $U_\nu$ be an open subset of $\Q_\nu ^d,$ $\bx_0 \in U_\nu$ and let 
              	$\mathcal{F}\subset C^l(U)$ be a compact family of functions $f: U\to \Q_\nu $
              	for some $l\geq 2$. Also assume that 
              	\begin{equation}\label{3.4}
              	\inf_{f\in\mathcal{F}}\max_{0<|\beta|\leq l} \ |\partial_{\beta}f(\bx_0)|>0.
              	\end{equation}
              	
              	Then there exists a neighbourhood $V_\nu\subset U_\nu$ of $\bx_0$ and $C, \delta > 0$ satisfying the following property. For any $\Theta\in C^l(U)$ such that 
              	\begin{equation}\label{theta_cond}
              	\sup_{\bx\in U_\nu} \max_{0<|\beta|\leq l} \ |\partial_{\beta}\Theta(\bx_0)|\leq \delta
              	\end{equation} 
              	and for any $f\in \mathcal {F}$ we have that 
              	\begin{enumerate}
              	\item  $f+\Theta $ is $(C,\frac{1}{dl})$-good on $V_\nu$.
              	\item $|\nabla(f+\Theta)|$ is $ \left(C,\frac{1}{m(l-1)}\right)$-good on $V_\nu$
           	 \end{enumerate}
               \end{proposition}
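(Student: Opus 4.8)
The plan is to reduce the statement to an application of Theorem~\ref{theorem 3.2}, exactly as in the proof of Proposition~1 of \cite{BaBeVe}, but carrying out the extra bookkeeping needed in the ultrametric setting. First I would use compactness of $\mathcal{F}$ together with the hypothesis \eqref{3.4}: for each $f\in\mathcal{F}$ some partial derivative $\partial_\beta f$ with $0<|\beta|\le l$ is nonzero at $\bx_0$, hence by continuity nonzero on a neighbourhood; a finite subcover gives a single neighbourhood $W$ of $\bx_0$ and a constant $\rho>0$ such that $\max_{0<|\beta|\le l}|\partial_\beta f(\bx)|\ge\rho$ for all $f\in\mathcal{F}$ and all $\bx\in W$. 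Translating via \eqref{multivanish}, this means that for each $f$ and each $\bx$ near $\bx_0$ there is a multiindex $\beta$ and a coordinate direction $j$ such that the difference quotient $|\Phi_j^{i_j}(\text{partial in other variables of }f)|$ is bounded below; more precisely one wants to get into the shape required by \eqref{eqn 3.3}, namely that along some coordinate line a $k$-th order difference quotient is bounded below.

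Second, I would shrink $W$ to a polydisc $V_\nu$ small enough that, on $V_\nu$, the relevant difference quotients $\Phi_j^k f$ are not merely bounded below but essentially constant in absolute value — this is where one uses that $f$ is $C^l$ and that in an ultrametric field a continuous function which is bounded away from $0$ on a sufficiently small ball has constant absolute value on a still smaller ball (by the ultrametric inequality, $|a-b|$ small forces $|a|=|b|$). This is the step that replaces the mean-value/Taylor argument of the Archimedean proof. Having done this, choosing $\delta$ small forces the perturbation $\Theta$, whose partials up to order $l$ are $\le\delta$ by \eqref{theta_cond}, to have difference quotients $|\Phi_j^k\Theta|$ strictly smaller than the lower bound for $|\Phi_j^k f|$, so by the ultrametric inequality $|\Phi_j^k(f+\Theta)|\equiv|\Phi_j^k f|\equiv A_j$ on the relevant set. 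Then Theorem~\ref{theorem 3.2} applies to $f+\Theta$ with $k=l$ (or the appropriate order $\le l$), yielding part~(1): $f+\Theta$ is $(C,\tfrac{1}{dl})$-good on $V_\nu$, with $C=dl^{3-1/l}$ absorbed into the stated constant via (G4).

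For part~(2) I would apply the same machinery one order down. If the multiindex realizing the lower bound in \eqref{3.4} has $|\beta|\ge 1$, then some first-order partial $\partial_i f$ has a nonvanishing partial derivative of order $\le l-1$ at $\bx_0$; running the argument of the previous paragraph with $f$ replaced by each component $\partial_i(f+\Theta)=\partial_i f+\partial_i\Theta$ (noting $\partial_i\Theta$ still has small higher partials, after possibly shrinking $\delta$) shows each $\partial_i(f+\Theta)$ is $(C',\tfrac{1}{m(l-1)})$-good on $V_\nu$. By property (G2), the supremum $|\nabla(f+\Theta)|=\max_i|\partial_i(f+\Theta)|$ is then $(C',\tfrac{1}{m(l-1)})$-good, which is part~(2). (Here $m=d$; I use whichever the paper fixes.)

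The main obstacle I anticipate is the uniformity over the whole family $\mathcal{F}$ and over the inhomogeneous perturbation $\Theta$ simultaneously: one must choose the neighbourhood $V_\nu$, the constant $C$, and the threshold $\delta$ once and for all, independently of $f$ and $\Theta$. Compactness of $\mathcal{F}$ handles the dependence on $f$, but one has to check that the ``essentially constant'' radius and the comparison $|\Phi_j^k\Theta|<A_j$ can be made uniform — this needs a quantitative form of the ultrametric-continuity argument, i.e.\ a modulus of continuity for $\Phi_l f$ that is uniform over $\mathcal{F}$ (again from compactness in $C^l$) and an a priori lower bound $A_j\ge$ some fixed positive quantity depending only on $\rho$. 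Once these uniform estimates are in hand, the rest is a direct invocation of Theorem~\ref{theorem 3.2} and properties (G1)--(G4).
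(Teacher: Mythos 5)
Your outline correctly identifies the target (reduce to Theorem~\ref{theorem 3.2}) and correctly spots that the ultrametric inequality is what controls the $\Theta$-perturbation and makes difference quotients locally of constant absolute value. But there is a genuine gap in how you propose to verify the hypothesis of Theorem~\ref{theorem 3.2}. That theorem requires $|\Phi_j^k f|\equiv A_j>0$ simultaneously for \emph{every} coordinate direction $j=1,\dots,d$. Hypothesis~\eqref{3.4} only gives you one nonzero mixed partial $\partial_\beta f(\bx_0)$ of some order $k\le l$ (possibly a genuinely mixed index like $\partial_1\partial_2$), which tells you nothing about the pure $k$-th order partials $\partial_j^k f$; indeed one of them may vanish identically on all of $U_\nu$. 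Your phrase ``along some coordinate line a $k$-th order difference quotient is bounded below'' is therefore not enough to invoke the theorem, and the later step ``Theorem~\ref{theorem 3.2} applies to $f+\Theta$ with $k=l$'' does not follow.

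The missing ingredient, and the heart of the paper's argument, is a linear change of variables $A\in\GL_d(\mathcal{O})$. By the chain rule each $\partial_i^k(f\circ A)(A^{-1}\bx_0)$ is the value of a fixed nonzero homogeneous degree-$k$ polynomial $g$ at the $i$-th column of $A$, and one shows by a nonvanishing argument on $\{1+\pi\mathcal{O}\}\times\{\pi\mathcal{O}\}\times\cdots\times\{\pi\mathcal{O}\}$ (and its permutations) that columns can be chosen so that all these values are nonzero while $\det A$ is a unit. Because $A\in\GL_d(\mathcal{O})$ the norm scales do not change and the $(C,\alpha)$-good property pulls back from $f\circ A$ to $f$. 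Compactness of $\mathcal{F}$ is then used a second time, via the continuous map $\Phi_1(A,f,\bx)=\min_i|\partial_i^k(f\circ A)(A^{-1}\bx)|$, to make the lower bound, the neighbourhood, and the choice of finitely many matrices $A_{f_1},\dots,A_{f_r}$ uniform across $\mathcal{F}$. The same change of variables is also what makes part~(2) work: it guarantees that \emph{every} $\partial_j(f\circ A)$ satisfies condition~\eqref{3.4} with $l-1$, whereas your version (``some first-order partial $\partial_i f$ has a nonvanishing partial'') only handles one direction, and you would still need a coordinate change to conclude for $|\nabla(f+\Theta)|=\max_i|\partial_i(f+\Theta)|$. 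Without the $\GL_d(\mathcal{O})$ reduction the proof does not close.
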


As a Corollary, we have,

\begin{corollary}\label{good_corollary}
	Let $U_\nu$ be an open subset of $\Q_\nu^{d\nu}, \bx_0\in U_\nu$ be fixed and assume that $\f_\nu=(f_\nu^{(1)},f_\nu^{(2)},\dots,f_\nu^{(n)}): 
	U_\nu\to \Q_\nu^n $ satisfies (I2) and (I3) and that  $\Theta_\nu $ satisfies (I5). Then there exists a neighbourhood $V_\nu\subset U_\nu$ of $\bx_0$ and positive constants $ C > 0 $ and $l\in \N$  such that for any $(a_0,\ba)\in \mathcal{O}^{n+1},$ 
	\begin{enumerate}
		\item $a_0+\ba.\f_{\nu}+\Theta_\nu$ is $(C,\frac{1}{d_\nu l})$-good on $V_\nu,$ and 
		\item $|\nabla(\ba.\f_\nu +\Theta_\nu)| $ is $ (C,\frac{1}{d_\nu(l-1)})$-good on $V_\nu$.
		\end{enumerate}
	 \end{corollary}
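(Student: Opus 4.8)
The plan is to reduce Corollary~\ref{good_corollary} to Proposition~\ref{Calpha_Prop} by exhibiting an appropriate compact family $\mathcal{F}$ of functions to which the Proposition applies, uniformly in $(a_0,\ba)\in\mathcal{O}^{n+1}$. First I would dispose of the trivial scalings: by property (G1), multiplying a $(C,\alpha)$-good function by a constant in $\Q_\nu$ preserves goodness, so I may normalize. The natural candidate is the family
\[
\mathcal{F}:=\Bigl\{\ (a_0,\ba)\cdot(1,\f_\nu)\ :\ (a_0,\ba)\in\mathcal{O}^{n+1},\ \|(a_0,\ba)\|=1\ \Bigr\},
\]
i.e.\ the set of linear combinations $a_0+\ba\cdot\f_\nu$ with coefficients in $\mathcal{O}$ whose sup-norm is exactly $1$ (one coefficient a unit). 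This set is a continuous image of the compact set $\{(a_0,\ba)\in\mathcal{O}^{n+1}:\|(a_0,\ba)\|=1\}$ under the (continuous, since $\f_\nu\in C^l$) map $(a_0,\ba)\mapsto a_0+\ba\cdot\f_\nu$ into $C^l(U_\nu)$, hence compact in $C^l(U_\nu)$. An arbitrary nonzero $(a_0,\ba)\in\mathcal{O}^{n+1}$ gives a function $\lambda g$ with $g\in\mathcal{F}$ and $\lambda=\|(a_0,\ba)\|\in\Q_\nu$, and the zero vector gives the constant $\Theta_\nu$, so it suffices to treat $\mathcal{F}$.

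The next step is to verify the nondegeneracy hypothesis \eqref{3.4} for $\mathcal{F}$ at $\bx_0$. This is exactly where assumption (I3) enters: the restrictions of $1,f_\nu^{(1)},\dots,f_\nu^{(n)}$ to any open subset of $\bU_\nu$ are linearly independent over $\Q_\nu$, and $\f_\nu$ is analytic. For a fixed $g=a_0+\ba\cdot\f_\nu\in\mathcal{F}$, linear independence over $\Q_\nu$ together with analyticity forces all partial derivatives $\partial_\beta g$, $0<|\beta|\le l$, to vanish at $\bx_0$ only if $\ba=0$; but $\|(a_0,\ba)\|=1$ with $\ba=0$ forces $a_0$ a unit, and then $g$ is a nonzero constant, which is still nondegenerate in the relevant sense — wait, more carefully: nondegeneracy at $\bx_0$ in the sense of \cite{KM,KT} says $\Q_\nu^n$ is spanned by partials of $\f_\nu$ at $\bx_0$ up to some finite order; choosing $l$ large enough (uniformly, by analyticity and a standard compactness argument on $\bU_\nu$, as in \cite{KT}) this order works at $\bx_0$. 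Hence $\max_{0<|\beta|\le l}|\partial_\beta g(\bx_0)|>0$ for every $g\in\mathcal{F}$ with nonzero linear part, and the remaining functions (nonzero constants) contribute no obstruction since we may enlarge $\mathcal{F}$ harmlessly or note that the set of $g$ with $\ba\neq 0$ is still compact (closed in $\mathcal{F}$). Compactness then upgrades the pointwise positivity to the uniform lower bound \eqref{3.4}: $\inf_{g\in\mathcal{F}}\max_{0<|\beta|\le l}|\partial_\beta g(\bx_0)|>0$.

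With \eqref{3.4} in hand, Proposition~\ref{Calpha_Prop} yields a neighbourhood $V_\nu$ of $\bx_0$ and constants $C,\delta>0$ such that whenever $\Theta_\nu\in C^l(U_\nu)$ satisfies $\sup_{\bx\in U_\nu}\max_{0<|\beta|\le l}|\partial_\beta\Theta_\nu(\bx)|\le\delta$, both $g+\Theta_\nu$ is $(C,\tfrac{1}{d_\nu l})$-good and $|\nabla(g+\Theta_\nu)|$ is $(C,\tfrac{1}{d_\nu(l-1)})$-good on $V_\nu$, for every $g\in\mathcal{F}$. The only remaining wrinkle is that (I5) bounds the first partials of $\Theta_\nu$ and the second difference quotients but does not a priori give the smallness threshold $\delta$; I would handle this by the standard rescaling trick used in \cite{BKM,KT,MoS1} — replace $\bU_\nu$ by a sufficiently small sub-box and rescale coordinates so that the higher derivatives of $\Theta_\nu$ (and of $\f_\nu$) become as small as we like, at the cost of shrinking $V_\nu$; since $(C,\alpha)$-goodness is a local property stable under affine coordinate changes (property (G3) and the Proposition's own proof), this is harmless. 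Finally, for a general $(a_0,\ba)\in\mathcal{O}^{n+1}$, write $a_0+\ba\cdot\f_\nu+\Theta_\nu = \lambda\bigl(g+\lambda^{-1}\Theta_\nu\bigr)$ with $\lambda=\|(a_0,\ba)\|$ if $\lambda\neq 0$; since $\lambda\in\mathcal{O}\setminus\{0\}$ we have $|\lambda^{-1}|\ge 1$, so I should instead argue directly: $a_0+\ba\cdot\f_\nu+\Theta_\nu$ with $\|(a_0,\ba)\|\le 1$ is $\lambda g+\Theta_\nu$ where $g\in\mathcal{F}$ and $|\lambda|\le1$; applying the Proposition to the single function $g$ and noting that $\lambda g+\Theta_\nu = \lambda(g+\lambda^{-1}\Theta_\nu)$ is problematic, so the cleanest route is to incorporate the scaling into $\mathcal{F}$ itself by taking $\mathcal{F}=\{a_0+\ba\cdot\f_\nu : (a_0,\ba)\in\mathcal{O}^{n+1}\}$, which is already compact (image of the compact $\mathcal{O}^{n+1}$), and checking \eqref{3.4} on the compact \emph{subset} where $\ba\neq 0$ stays bounded away from $0$ — concretely, observe that \eqref{3.4} can only fail along a sequence $(a_0^{(j)},\ba^{(j)})$ with $\ba^{(j)}\to 0$, and for those the function is eventually a near-constant whose goodness (and whose gradient's goodness) follows trivially from (G3) and (G4). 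The main obstacle, then, is precisely this uniformity-over-all-of-$\mathcal{O}^{n+1}$ issue: ensuring the single pair $(C,l)$ works simultaneously for coefficient vectors near the "degenerate" locus $\ba=0$ and for genuinely nondegenerate ones; once one observes that the degenerate regime is handled by the elementary properties (G1)–(G4) and the nondegenerate regime by Proposition~\ref{Calpha_Prop} via compactness, the corollary follows by taking the worse of the two constants and intersecting the two neighbourhoods.
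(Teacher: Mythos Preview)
Your approach has a genuine gap in the ``degenerate regime'' where $\ba$ is small. You write that when $\ba^{(j)}\to 0$ the function $a_0^{(j)}+\ba^{(j)}\cdot\f_\nu+\Theta_\nu$ is ``eventually a near-constant whose goodness follows trivially from (G3) and (G4)''. This is false: the limit is $a_0+\Theta_\nu$, which is $\Theta_\nu$ shifted by a constant, and there is no reason for $\Theta_\nu$ to be constant or near-constant on any neighbourhood. So the elementary properties (G1)--(G4) do not dispose of this case, and the split into ``nondegenerate'' and ``degenerate'' coefficient vectors does not close. The related rescaling manoeuvre you invoke to force $\Theta_\nu$ below the threshold $\delta$ of Proposition~\ref{Calpha_Prop} does not work either: under a dilation $x\mapsto rx$ both the derivatives of $\Theta_\nu$ and the constant $C_1$ witnessing \eqref{3.4} scale by the same powers of $r$, so the ratio governing $\delta$ does not improve.

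The paper's proof avoids both issues by building $\Theta_\nu$ into the family from the outset: it takes $\mathcal{F}=\{a_0+\ba\cdot\f_\nu+\Theta_\nu:(a_0,\ba)\in\mathcal{O}^{n+1}\}$ and applies Proposition~\ref{Calpha_Prop} with \emph{zero} perturbation. If \eqref{3.4} holds for some $l$ one is done immediately. If \eqref{3.4} fails for every $l$, compactness of $\mathcal{O}^n$ produces a single $\bc\in\mathcal{O}^n$ with $\partial_\beta(\bc\cdot\f_\nu+\Theta_\nu)(\bx_0)=0$ for all $\beta$; analyticity (this is where (I5) is used essentially) then forces $\Theta_\nu=u-\bc\cdot\f_\nu$ on a neighbourhood of $\bx_0$ for some constant $u$. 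The family becomes $\{(a_0+u)+(\ba-\bc)\cdot\f_\nu\}$, which is purely homogeneous, and \emph{now} your normalization idea works: when $|a_0+u|<2|\ba-\bc|$ one divides by $|\ba-\bc|$ and the linear independence in (I3) gives \eqref{3.4}; when $|a_0+u|\ge 2|\ba-\bc|$ the function genuinely is near-constant ($\sup\le 3\inf$) and (G3) applies. The analyticity step extracting $\Theta_\nu=u-\bc\cdot\f_\nu$ is the idea missing from your argument.
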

	
	     \begin{proof}
	For the case $\nu=\infty$, see Corollary $3$ of \cite{BaBeVe} and also \cite{BKM}. So we may assume $\nu\neq \infty.$
	 Let $\mathcal{F}:= \{a_0+\ba.\f_\nu+\Theta_\nu \ |\  (a_0,\ba)\in\mathcal{O}^{n+1}\}$. This is a compact family of functions of $C^l(U_\nu)$ for every $l>0 $ since $\mathcal{O}$ is compact in $\Q_\nu$. Now if this family satisfies condition (\ref{3.4}) for some $l\in \N$, then the conclusion follows from the previous Proposition. Hence we may assume that the family does not satisfy (\ref{3.4}) for every $l\in \N$. Then by the continuity of differential and the compactness of $\mathcal{O}$, there exists $\bc_l\in \mathcal{O}^n$ such that for every  $2 \leq l\in \N $ we have
	 $$ \max_{|\beta|\leq l}|\partial_{ \beta}(\bc_l.f_\nu+\Theta_\nu)(\bx_0)| > 0.$$
		Now this sequence $\{\bc_l\} \in\mathcal{O}^n$ has a convergent subsequence $\{\bc_{l_k}\}$ converging to $\bc \in \mathcal{O}^n$ since $\mathcal{O}^n$ is compact. By taking limits we get that 
		$$|\partial_{ \beta}(\bc.f_\nu+\Theta_\nu)(\bx_0)|=0 \ \forall \ \beta.$$
		However, as each of the $\f_{\nu}$ and $\Theta_\nu$ are analytic on $U_\nu,$ there exists a neighbourhood $V_{\bx_0}$	 of $\bx_0$ such that 
		$$(\bc.f_\nu+\Theta_\nu)(\bx)=u\ \forall \ \bx \in V_{\bx_0},$$ 
		where $u \in \Q_\nu$ is a constant. Therefore replacing  $\Theta_\nu$ by $u-\bc.\f_{\nu},$ we get that  $$\mathcal{F}=\{ a_0+u+(\ba-\bc).\f_{\nu} \ | (a_0,\ba)\in \mathcal{O}^{n+1} \}.$$
		First consider the case where $|a_0+u| < 2|\ba-\bc|,$ then 
		$$\mathcal{F}_1= \left\{\frac{a_0+u}{|\ba-\bc|}+\frac{\ba-\bc}{|\ba-\bc|}.\f_\nu |\  (a_0,\ba)\in\mathcal{O}^{n+1}\right\}$$ 
		is compact in $C^l(U_\nu)$ for every $l\in \N$. Then by linear independence of $1,f_\nu^{(1)},\cdots,f_\nu^{(n)},$ $\mathcal{F}_1$ satisfies (\ref{3.4}) for some $l\in\N$. And then by Proposition \ref{Calpha_Prop}
		 we can conclude that every element in $\mathcal{F}_1$ is $(C,\frac{1}{d_\nu l})$-good on some $V_\nu\subset V_{\bx_0}\subset U_\nu$ together with conclusion (2) of the Corollary above. This also implies $ a_0+u+(\ba-\bc).\f_{\nu} $ are all $(C,\frac{1}{d_\nu l})$ good on $V_\nu$ for all $(a_0,\ba)\in\mathcal{O}^{n+1}$ with $|a_0+u| < 2|\ba-\bc|$. Otherwise $$\sup_{\bx\in V_{\bx_0}}|a_0+u+(\ba-\bc).\f_{\nu}|\leq 3.\inf_{\bx\in V_{\bx_0}}|a_0+u+(\ba-\bc).\f_{\nu}|$$ as $|a_0+u|\geq 2|\ba-\bc| $ and it turns out to be a trivial case. This implies that for $C\geq 3$ and $0<\alpha\leq1$ the aforementioned functions are $(C,\alpha)$-good. 
		 	\end{proof}
 Let us recall the following Corollary from \cite{KT} (Corollary 2.3).
 \begin{corollary}{\label{product_good}}
 	For $ j=1,\cdots,n,$ let $X_j$ be a metric space, $\mu_j$ be a measure on $X_j $. Let $ U_j\subset X_j $ be open, $C_j,\alpha_j >0 $ and let $f$ be a function on $U_1\times\cdots \times U_d$ such that for any $j=1,\cdots d$ and any $x_i\in U_i$ with $i\neq j,$ the function
 	\begin{equation}{\label{fun}}
 	y~~\mapsto f(x_1,\cdots,x_{j-1}, y, x_{j+1},\cdots, x_d)
 	\end{equation}
 	is $(C_j,\alpha_j)$-good on $U_j$ with respect to $\mu_j$. Then $f$ is $(\widetilde{C},\widetilde{\alpha}) $ -good on $U_1\times\cdots\times U_d $ with respect to $\mu_1\times\cdots\times\mu_d,$  where $\widetilde{C}=d,\widetilde{\alpha }$ are computable in terms of $C_j,\alpha_j $. In particular, if each of the functions (\ref{fun}) is $(C,\alpha)$-good on $U_j$ with respect to $\mu_j$, then the conclusion holds with $\widetilde{\alpha}=\frac{\alpha}{d}$ and $\widetilde{C}=dC$.
		 \end{corollary}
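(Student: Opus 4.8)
The plan is to establish the two--factor case $d=2$ and to obtain the general statement by an induction that peels off one coordinate at a time. The underlying observation is that in the settings we use a ball $B\subset U_1\times\cdots\times U_d$ centred in $\supp(\mu_1\times\cdots\times\mu_d)$ is a product $B_1\times\cdots\times B_d$ of balls $B_j\subset U_j$ centred in $\supp\mu_j$, with $|B|=\prod_j|B_j|$; this is exactly what makes a Fubini--type argument available.

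For $d=2$, fix such a product ball $B=B_1\times B_2$ and $\varepsilon>0$, and set $M:=\sup_{x\in B}|f(x)|$. If $\varepsilon\geq M$ the inequality (\ref{gooddef}) is trivial once $\widetilde C\geq 1$, so assume $\varepsilon<M$. Consider $g(x_1):=\sup_{x_2\in B_2}|f(x_1,x_2)|$ on $B_1$; by hypothesis each function $x_1\mapsto|f(x_1,x_2)|$ is $(C_1,\alpha_1)$--good with the same constants, so by property (G2) the function $g$ is $(C_1,\alpha_1)$--good on $U_1$, and $\sup_{x_1\in B_1}g(x_1)=M$. Now pick a threshold $\delta\in(0,M]$, to be chosen, and split $B_1$ into $\{x_1:g(x_1)<\delta\}$ and $\{x_1:g(x_1)\geq\delta\}$. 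On the first set good--ness of $g$ gives $\mu_1(\{g<\delta\})\leq C_1(\delta/M)^{\alpha_1}|B_1|$, and we bound the fibre measure trivially by $|B_2|$; on the second set, for each such $x_1$ the map $x_2\mapsto|f(x_1,x_2)|$ has supremum over $B_2$ at least $\delta$, so good--ness in the $x_2$--direction yields $\mu_2(\{x_2\in B_2:|f(x_1,x_2)|<\varepsilon\})\leq C_2(\varepsilon/\delta)^{\alpha_2}|B_2|$. Integrating over $B_1$ by Fubini,
\[
\mu\bigl(\{x\in B:|f(x)|<\varepsilon\}\bigr)\ \leq\ \bigl(C_1(\delta/M)^{\alpha_1}+C_2(\varepsilon/\delta)^{\alpha_2}\bigr)\,|B_1|\,|B_2|.
\]

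The next step is to optimise $\delta$: choosing $\delta$ with $\delta^{\alpha_1+\alpha_2}=M^{\alpha_1}\varepsilon^{\alpha_2}$ (which lies in $(0,M]$ since $0<\varepsilon<M$) makes both terms equal to a constant multiple of $(\varepsilon/M)^{\alpha_1\alpha_2/(\alpha_1+\alpha_2)}$, so $f$ is $(C_1+C_2,\,\alpha_1\alpha_2/(\alpha_1+\alpha_2))$--good on $U_1\times U_2$. For general $d$ one iterates this: for fixed $x_d$ the function $(x_1,\dots,x_{d-1})\mapsto f(x_1,\dots,x_d)$ is good on $U_1\times\cdots\times U_{d-1}$ by the inductive hypothesis (its one--variable restrictions are exactly the ones assumed good), and for fixed $(x_1,\dots,x_{d-1})$ the map $x_d\mapsto f(x_1,\dots,x_d)$ is $(C_d,\alpha_d)$--good, so the $d=2$ argument applied to $(U_1\times\cdots\times U_{d-1})\times U_d$ finishes the step. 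The exponents combine as $1/\widetilde\alpha=\sum_{j=1}^d 1/\alpha_j$ and the constants add, so one may take $\widetilde C=\sum_{j=1}^d C_j$ (replaced by $\max(\cdot,1)$ if necessary); in the symmetric case $C_j=C$, $\alpha_j=\alpha$ this gives precisely $\widetilde\alpha=\alpha/d$ and $\widetilde C=dC$, as stated.

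The only routine technical points are the measurability of the fibrewise level sets and of $g$ (immediate here, $f$ being continuous), the validity of Fubini for the product measure, and the identification of an arbitrary ball in the product with a product of balls; none of these is a real obstacle in the $S$--adic situation at hand. The one genuinely substantive step is the choice of the threshold $\delta$, which interpolates between the ``horizontal'' good--ness of the partial supremum $g$ and the ``vertical'' good--ness of $f$ along fibres; everything else is bookkeeping and the elementary induction on $d$.
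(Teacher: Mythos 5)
The paper does not actually prove this Corollary; it is recalled verbatim from Kleinbock--Tomanov (cited there as Corollary 2.3), so there is no in-paper proof to compare against. Your Fubini-plus-threshold argument is correct and is essentially the standard argument underlying the Kleinbock--Margulis and Kleinbock--Tomanov proofs of this fact: use (G2) to make the fibrewise supremum $g(x_1)=\sup_{x_2\in B_2}|f(x_1,x_2)|$ a $(C_1,\alpha_1)$-good function of $x_1$, split $B_1$ according to whether $g$ exceeds a threshold $\delta$, apply goodness in the remaining direction on the large-$g$ part, integrate, and optimize $\delta$; the induction on $d$ then yields $1/\widetilde{\alpha}=\sum_j 1/\alpha_j$ and $\widetilde{C}=\sum_j C_j$, which specializes to $\widetilde{\alpha}=\alpha/d$, $\widetilde{C}=dC$. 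The product-ball identification you flag is indeed the only point requiring care, and it is automatic in the ultrametric sup-norm setting in which the Corollary is applied here.
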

	 Now combining Corollary (\ref{good_corollary}) and (\ref{product_good}) we can state the following:
	 
	 \begin{corollary}\label{good_function}
	Let  $\f $ and $\Theta$ be as in Corollary (\ref{good_corollary}) and let $\bx_0\in \bU.$ Then there exists a neighbourhood $\bV\subset\bU $ of $\bx_0$  and  $C>0, k,k_1\in\N$ such that for any $(a_0,\ba)\in\Z^{n+1} $  the following holds:
	 \begin{enumerate}
	 \item 
	$ \bx~\mapsto ~|(a_0+\ba.\f+\Theta )(\bx)|_S\text{ is } (C,\frac{1}{dk})-\text{good on } \bV$,
	 \item 
	 $\bx~\mapsto~\|\nabla(\ba.\f_{\nu}+\Theta_\nu)(\bx_{\nu})\| \text{ is } (C,\frac{1}{dk_1})-\text{ good on } \bV, \forall ~\nu\in S$
	 \end{enumerate} 
	 where $d=\max{d_\nu}$.
	 \end{corollary}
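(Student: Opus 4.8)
The plan is to deduce Corollary \ref{good_function} by combining the two earlier results in the excerpt---the single-variable statement Corollary \ref{good_corollary} and the product-space mechanism Corollary \ref{product_good} (= Corollary 2.3 of \cite{KT})---after a routine reduction to the place-by-place setting. First I would fix $\bx_0 = (x_{0,\nu})_{\nu\in S}\in\bU$ and apply Corollary \ref{good_corollary} at each $\nu\in S$: this produces a neighbourhood $V_\nu\subset U_\nu$ of $x_{0,\nu}$ and constants $C_\nu>0$, $l_\nu\in\N$ such that for every $(a_0,\ba)\in\mathcal{O}_\nu^{n+1}$ (equivalently, after scaling, for every $(a_0,\ba)\in\bbZ^{n+1}$, since scaling by $\lambda\in\Q_\nu$ preserves the $(C,\alpha)$-good property by (G1)) both $a_0+\ba\cdot\f_\nu+\Theta_\nu$ is $(C_\nu,\tfrac{1}{d_\nu l_\nu})$-good on $V_\nu$ and $|\nabla(\ba\cdot\f_\nu+\Theta_\nu)|$ is $(C_\nu,\tfrac{1}{d_\nu(l_\nu-1)})$-good on $V_\nu$, all with respect to the local Haar measure. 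Setting $\bV := \prod_{\nu\in S}V_\nu$, $d := \max_\nu d_\nu$, $k:=\max_\nu l_\nu$, $k_1 := \max_\nu(l_\nu-1)$, property (G4) lets me uniformize the constants: each function above is $(C',\tfrac{1}{dk})$-good (resp. $(C',\tfrac{1}{dk_1})$-good) on the corresponding $V_\nu$ for $C' := \max\{\max_\nu C_\nu,\,3,\,1\}$.

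Next I would assemble the global statement. For part (2) there is nothing further to do: the claim is stated $\nu$ by $\nu$, so for each fixed $\nu\in S$ the map $\bx\mapsto\|\nabla(\ba\cdot\f_\nu+\Theta_\nu)(x_\nu)\|$ depends only on the $\nu$-coordinate and is $(C',\tfrac{1}{dk_1})$-good on $\bV$ as a function that factors through the projection to $V_\nu$ (a constant-in-the-other-coordinates function inherits the good property trivially, or one invokes Corollary \ref{product_good} with the other factors contributing nothing). For part (1), observe that the $S$-adic absolute value is $|z|_S = \max_{\nu\in S}|z_\nu|_\nu$, so
\[
\bx\longmapsto |(a_0+\ba\cdot\f+\Theta)(\bx)|_S \;=\; \max_{\nu\in S}\,\bigl|(a_0+\ba\cdot\f_\nu+\Theta_\nu)(x_\nu)\bigr|_\nu .
\]
By property (G2), a supremum (hence a finite maximum) of $(C',\alpha)$-good functions is again $(C',\alpha)$-good, so it suffices to show each $\bx\mapsto|(a_0+\ba\cdot\f_\nu+\Theta_\nu)(x_\nu)|_\nu$ is $(C',\tfrac{1}{dk})$-good on the product $\bV$ with respect to the product measure $\prod_\mu|\cdot|_\mu$. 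This is exactly where Corollary \ref{product_good} enters: for fixed $\nu$, viewing this function on $\prod_{\mu\in S}V_\mu$, the one-variable slices in the $\mu$-th coordinate are constant (hence $(C',\text{anything})$-good, cf. (G4)) for $\mu\neq\nu$ and are $(C',\tfrac{1}{d_\nu l_\nu})$-good for $\mu=\nu$ by the previous paragraph; Corollary \ref{product_good} then yields that the function is $(\widetilde C,\widetilde\alpha)$-good on $\bV$ with $\widetilde\alpha$ computable from the slice data, and a final application of (G4) brings the exponent down to the uniform value $\tfrac{1}{dk}$ and the constant up to $\widetilde C = |S|\,C'$ (absorbing into a single $C$).

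The only genuine subtlety---and the step I would treat most carefully---is the bookkeeping of the $(C,\alpha)$-good constants across the two reductions: one must check that the exponent $\alpha$ produced by Corollary \ref{product_good} (which in general is only ``computable in terms of $C_j,\alpha_j$'', not simply $\alpha/d$ unless all slices share the same $\alpha$) can genuinely be replaced by the clean value $\tfrac{1}{dk}$, and that this replacement is legitimate via (G4), i.e. that $\tfrac{1}{dk}\le\widetilde\alpha$. Since $k = \max_\nu l_\nu \ge l_\nu$ and $d\ge d_\nu$, we have $\tfrac{1}{dk}\le\tfrac{1}{d_\nu l_\nu}$ for each $\nu$, and the special case of Corollary \ref{product_good} (same $C,\alpha$ in every slice, giving $\widetilde\alpha=\alpha/|S|$) combined with (G4) covers it; one just has to make sure the trivial (constant) slices are handled so as not to force $\widetilde\alpha$ below $\tfrac{1}{dk}$, which (G4) again permits since a constant function is $(C',\beta)$-good for every $\beta>0$ once $C'\ge1$. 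Everything else---the passage from $\mathcal{O}_\nu^{n+1}$ to $\bbZ^{n+1}$ via (G1), the uniformization of constants over the finitely many places in $S$, and the identity $|z|_S=\max_\nu|z_\nu|_\nu$---is routine, so I expect the proof to be short.
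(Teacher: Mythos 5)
Your proposal reconstructs exactly the argument the paper intends: Corollary \ref{good_function} is presented there with no written proof beyond the remark ``Now combining Corollary (\ref{good_corollary}) and (\ref{product_good}) we can state the following,'' and your steps --- apply Corollary \ref{good_corollary} at each place to get $V_\nu$, set $\bV=\prod_\nu V_\nu$, pass to the product via Corollary \ref{product_good}, and uniformize constants with (G1)--(G4) and the identity $|z|_S=\max_\nu|z_\nu|_\nu$ together with (G2) --- fill in precisely that gap. One small arithmetic slip: Corollary \ref{product_good} divides the exponent by the number of factors $|S|$ before (G4) is invoked, so $\tfrac{1}{dk}\le\tfrac{1}{d_\nu l_\nu}$ is not quite the needed inequality; you must take $k$ at least on the order of $|S|\cdot\max_\nu l_\nu$ rather than $\max_\nu l_\nu$, which is harmless since the statement leaves $k\in\N$ free (and the invocation of (G1) to pass from $\mathcal{O}_\nu^{n+1}$ to $\Z^{n+1}$ is unnecessary, since $\Z\subset\mathcal{O}_\nu$ already).
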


		 \section{Proof of Theorem \ref{thm:main}}
		 We set $\phi(\nu)=\left\{\begin{array}{rl} -\varepsilon & \text{ if } \nu\neq\infty \\
                1-\varepsilon & \text{ if }\nu=\infty 
                \end{array} \right.$
From the definition, it follows that $\cW_{\Psi,\Theta}^{\f}$ admits a description as a limsup set. Namely,
         $$\well =\limsup_{\ba \to \infty}\bW_\f (\ba,\Psi,\Theta)    $$  
        where 
                   $$\bW_\f (\ba,\Psi,\Theta)=\{\bx\in\bU :| a_0+ \ba\cdot \f(\bx)+\Theta(\bx)|_S^l\leq \Psi(\ba) \text{ for some } a_0 \} .$$
                   We may now write                
$$ \bW_{\f}^{\text{large}}(\ba,\Psi,\Theta)= \left \{\bx\in \Wf~:~\|\nabla(\ba .\f_\nu(\bx_\nu)+\Theta_\nu(\bx_\nu))\|>\|\ba\|^{\phi(\nu)} ~\forall~\nu \right\}$$
 where $0<\varepsilon <\frac{1}{4(n+1)l^2},$ is fixed and 
 $$\Wf\setminus\Wfl =\bigcup_{\nu\in S}\bW_{\nu,\f}^{\text{small}}(\ba, \Psi,\Theta)$$
 where $$\bW_{\nu,\f}^{\text{small}}(\ba,\Psi,\Theta)=\left\{\bx\in\Wf :\|\nabla(\ba.\f_\nu(\bx_\nu)+\Theta_\nu(\bx_\nu))\|\leq\|\ba\|^{\phi(\nu)} \right\}.$$ 
 As the set $S$ is finite, we have
$$\well=\cW_{\f}^{\text{large}}(\Psi,\Theta)\bigcup_{\nu\in S}\cW_{\nu,\f}^{\text{small}}(\Psi,\Theta)$$
                 	where $\cW_{\f}^{\text{large}}(\Psi,\Theta)=\welllarge$ and $\cW_{\nu,\f}^{\text{small}}(\Psi,\Theta)=\wellsmallnu.$
                 To prove Theorem \ref{thm:main}, we will show that each of these limsup sets has zero measure.  Namely, the proof is divided into the ``large derivative" case where we will show $|\cW_{\f}^{\text{large}}(\Psi,\Theta)|=0$, and the ``small derivative" case which involves  $|\cW_{\nu,\f}^{\text{small}}(\Psi,\Theta)|=0 \ \forall\ \nu\in S.$

\subsection{The small derivative}
		 We begin by showing that $|\cW_{\nu,\f}^{\text{small}}(\Psi,\Theta)|=0 \ \forall\ \nu\in S$. From the assumed property (I4) of $\Psi$ and convergence of the series $\sum \Psi(\ba)$, it follows that $ \Psi(\ba)<\Psi_0(\ba) :=\prod_{\substack{i=1,\cdots,n \\ a_i\neq 0}}|a_i|\inv.$ Therefore $\cW_{\nu,\f}^{\text{small}}(\Psi,\Theta)\subset \cW_{\nu,\f}^{\text{small}}(\Psi_0,\Theta)$, which means that it is enough to show that $ |\cW_{\nu,\f}^{\text{small}}(\Psi_0,\Theta)|=0 \ \forall\ \nu\in S $. Let us take $\mathcal{A}=\Z\times\Z^n\setminus\{0\} $ and $\bT=\Z_{\geq 0}^n $ and define the function
		 \begin{equation}\label{r_equation}
		 r_\nu (\bt)=\left\{\begin{array}{rl}
		  2^{(|\bt|+1)(1-\varepsilon)} & \text{if } \nu=\infty\\
		 2^{-(|\bt|+1)\varepsilon} & \text{if } \nu\neq\infty
		 \end{array} \right .
		 \end{equation}
		 where $\varepsilon$ is fixed as before. Now we define sets $ I_\bt^\nu(\alpha,\lambda) $ and $H_\bt^\nu(\alpha,\lambda)$ for every $\lambda>0,\bt\in\bT \text{ and } \alpha=(a_0, \ba)\in \mathcal{A} $ as follows:\\
		 \begin{equation}\label{def_I}
		 I_\bt^\nu(\alpha,\lambda)=\left\{ \bx\in\bU:\begin{array}{l}
		|a_0+\ba.\f(\bx)+\Theta(\bx)|_S^l<\lambda\Psi_0(2^\bt)\\
		\|\nabla(\ba.\f_{\nu}(\bx_\nu)+\Theta_\nu(\bx_\nu))\|<\lambda r_\nu(\bt)\\
		2^{t_i}\leq \max{\{1,|a_i|\}}\leq 2^{t_i+1} \ \forall \ 1\leq i\leq n
		\end{array}  
		\right\}
		 \end{equation}
		 and 
		\begin{equation}\label{def_H}
		 H_\bt^\nu(\alpha,\lambda)=\left\{ \bx\in\bU:\begin{array}{l}
		 |a_0+\ba.\f(\bx)|_S^l<2^l\lambda\Psi_0(2^\bt)\\
		\|\nabla(\ba.\f_{\nu}(\bx_\nu))\|<2\lambda r_\nu(\bt)\\
		|a_i|\leq 2^{t_i+2} \ \forall\  1\leq i \leq n 
		\end{array} 
		 \right\}
		\end{equation}
		where $2^\bt=(2^{t_1},\cdots,2^{t_n})$ and $|S|=l$. These give us the functions (\ref{I_fn}) and (\ref{H_fn}) required in the inhomogeneous transference principle. As in (\ref{defH}) and (\ref{deflambda}) we get $H_\bt^\nu(\lambda)$, $I_\bt^\nu(\lambda)$, $\Lambda_H^\nu(\lambda)$ and $\Lambda_I^\nu(\lambda)$.  Now define $\phi_\delta~:~\bT\mapsto \R_{+}$ as $\phi_\delta(\bt):=2^{\delta|\bt|}$ for $\delta\in(0,\frac{\varepsilon}{2}] $. Clearly $\cW_{\nu,\f}^{\text{small}}(\Psi_0,\Theta)\subset \Lambda_I^\nu(\phi_\delta) $ for every $\delta\in(0,\frac{\varepsilon}{2}]$. So to settle Case 2 it is enough to show that
		\begin{equation} \label{Inhomo_set}
		|\Lambda_I^\nu(\phi_\delta)|=0 \text{ for some } \delta\in(0,\frac{\varepsilon}{2}].
		\end{equation}

	The Theorem \ref{<}  is an $S$-adic analogue of Theorem $1.4$ in \cite{BKM} and is proved using nondivergence estimates for certain flows on homogeneous spaces. We will denote the set in the LHS of (\ref{<eqn}) as
	$S(\delta,K_{\nu_1},\cdots,K_{\nu_l},T_1,\cdots,T_n)$ for further reference.
	
	To show (\ref{Inhomo_set}) we want to use the Inhomogeneous transference principle (\ref{transfer}). Assume that  $(H_\nu,I_\nu,\Phi)$ satisfies the intersection property and that the product measure is contracting with respect to $(I_\nu,\Phi)$ where, $\Phi:=\{\phi_\delta : 0\leq \delta <\frac{\varepsilon }{2}\} $. Then by (\ref{transfer}) it is enough to show that
	\begin{equation}\label{homo_condi}
	|\Lambda_H^\nu(\phi_\delta)|=0 \text{ for some } 0<\delta \leq\frac{\varepsilon}{2}.
	\end{equation}
	
	\noindent Note that $$\Lambda_H^\nu(\phi_\delta)=\limsup_{\bt\in\bT} \bigcup_{\alpha\in\mathcal{A}} H_{\bt}^\nu(\alpha,\phi_\delta(\bt)).$$ 
	Using Theorem \ref{<}, we will show that $$\sum |\cup_{\alpha \in \mathcal{A}}H_{\bt}^\nu(\alpha,\phi_\delta(\bt))|<\infty $$ for some $0<\delta<\frac{\varepsilon}{2}$. This, together with Borel-Cantelli will give us $|\Lambda_H^\nu(\phi_\delta)|=0$.\\

\noindent By the definition  \ref{def_H} of $H_{\bt}^\nu(\alpha,\phi_\delta(\bt)),$ we get 
	$$\bigcup_{\alpha\in\mathcal{A}}H_{\bt}^\nu(\alpha,\phi_\delta(\bt))\subset S(2^l\phi_\delta(\bt)\Psi_0(2^{\bt}),1,\cdots ,2.\phi_\delta(\bt)r_\nu(\bt),\cdots,1,2^{t_1+2}, \dots, 2^{t_n +2})
$$	
i.e. here $K_\nu=2\cdot \phi_\delta(\bt)r_\nu(\bt), K_\omega=1,$ where $\omega\neq\nu$ and $T_i=2^{t_i+2}$. 

	\subsection{Case $1$ $(\nu=\infty)$}  Here $r_\infty(\bt)=2^{(1-\varepsilon)(|\bt|+1)}$. So,
	$$
2^l.2^{\delta|\bt|}\Psi_0(2^{\bt}).2.2^{\delta|\bt|}2^{(1-\varepsilon)(|\bt|+1)}.1.\frac{2^{\sum_{1}^n t_i+2}}{2 ^{|\bt|}}=2^{2n+l+2-\varepsilon}.2^{|\bt|(2\delta-\varepsilon)}<1$$ 
for all large $ \bt$ as $2\delta-\varepsilon<0$. So by Theorem \ref{<} we have 
		$$|\bigcup_{\alpha\in\mathcal{A}}H_{\bt}^\infty(\alpha,\phi_\delta(\bt))|\leq E\varepsilon_1^{\alpha_1}|\mathbf{B}|,
$$ where $\varepsilon_1=\max\{2.2^{\frac{\delta|\bt|-\sum t_i}{l}},2^{\frac{2n+l+2-\varepsilon}{l(n+1)}}.2^{\frac{|\bt|(2\delta-\varepsilon)}{l(n+1)}}\}
=2^{\frac{2n+l+2-\varepsilon}{l(n+1)}}.2^{\frac{|\bt|(2\delta-\varepsilon)}{l(n+1)}}$	 for all large $\bt\in \Z_{\geq 0}^n$. 
We note that $\varepsilon_1$ is ultimately the 2nd term in the parenthesis. Because if not then  for infinitely many $\bt$,
$$ 
\frac{\delta|\bt|-\sum t_i}{l}>\frac{|\bt|(2\delta-\varepsilon)}{l(n+1)} + O(1)$$
which implies that 
$$\sum t_i<|\bt| + O(1),
$$ a contradiction. Therefore we have $$|\bigcup_{\alpha\in\mathcal{A}}H_{\bt}^\infty(\alpha,\phi_\delta(\bt))| \ll 2^{-\gamma|\bt|},
$$ where $\gamma=\frac{(\varepsilon-2\delta)}{l(n+1)}\alpha_1>0$. Hence $$\sum_{\bt\in \bT}|\bigcup_{\alpha\in\mathcal{A}}H_{\bt}^\infty(\alpha,\phi_\delta(\bt))|\ll\sum_{\bt\in\bT} 2^{-\gamma|\bt|}<\infty.$$

\subsection{Case $2$ ($\nu\neq\infty$)} The argument proceeds as in Case $1$. In this case, $r_\nu(\bt)=2^{-\varepsilon (|\bt|+1)}$. So,
$$2^l.2^{\delta|\bt|}\Psi_0(2^{\bt}).2.2^{\delta|\bt|}2^{(-\varepsilon)(|\bt|+1)}.1.\frac{2^{\sum_{1}^n t_i+2}}{2 ^{|\bt|}}=2^{2n+l+1-\varepsilon}.2^{|\bt|(2\delta-\varepsilon-1)}<1$$
for large $\bt$ as $2\delta-\varepsilon<0$. Therefore, by Theorem \ref{<} we have 
 $$|\bigcup_{\alpha\in\mathcal{A}}H_{\bt}^\nu(\alpha,\phi_\delta(\bt))|\leq E\varepsilon_1^{\alpha_1}|\mathbf{B}|,$$ 
 where $\varepsilon_1=\max\{2^{\frac{\delta|\bt|-\sum t_i}{l}},2^{\frac{2n+l+1-\varepsilon}{l(n+1)}}.2^{\frac{|\bt|(2\delta-\varepsilon-1)}{l(n+1)}}\}
 =2^{\frac{2n+l+1-\varepsilon}{l(n+1)}}.2^{\frac{|\bt|(2\delta-\varepsilon-1)}{l(n+1)}}$ for all large $\bt\in \Z_{\geq 0}^n$. 
 As in case $1$, $\varepsilon_1$ is ultimately the 2nd term in the parenthesis. For if not, then for infinitely many $\bt$, 
 $$ 
 \frac{\delta|\bt|-\sum t_i}{l}>\frac{|\bt|(2\delta-\varepsilon-1)}{l(n+1)}+ O(1)$$
which implies that 
$$ \sum t_i<2|\bt|+ O(1).$$
 
This gives a contradiction. Therefore we have $$|\bigcup_{\alpha\in\mathcal{A}}H_{\bt}^\nu(\alpha,\phi_\delta(\bt))|\ll 2^{-\gamma|\bt|},$$ where $\gamma=\frac{(\varepsilon-2\delta+1)}{l(n+1)}\alpha_1>0$. Hence $$\sum_{\bt\in \bT}|\bigcup_{\alpha\in\mathcal{A}}H_{\bt}^\nu(\alpha,\phi_\delta(\bt))|\ll \sum_{\bt\in\bT} 2^{-\gamma|\bt|}<\infty.$$
 
  
  \begin{remark}
  	We will consider $|.|$ the measure to be restricted on some bounded open ball $\bV_{\bx_0}$ around $\bx_0\in \bU$. So we get $|\Lambda^\nu_{I}(\phi_\delta)\cap\bV_{\bx_0} |=0$. But because the space is second countable, we eventually get  $|\Lambda^\nu_{I}(\phi_\delta)|=0$. 
  	\end{remark}
  
 \subsection{Inhomogeneous transference principle} 
In this section we state the  inhomogeneous transference principle of Beresnevich and Velani from \cite[Section 5]{BeVe} which will allow 
us to convert our inhomogeneous problem to the homogeneous one. Let $(\Omega, d)$ be a locally compact metric space. Given two countable indexing sets $\mathcal{A}$ and $\mathbf{T}$, let H and I be two maps from $\bT \times \cA \times \R_{+}$ into the set of open subsets of $\Omega$ such that
\begin{equation}\label{H_fn}
 H~:~(t, \alpha, \lambda) \in \bT \times \cA \times \R_{+} \to H_{\mathbf{t}}(\alpha, \lambda) 
\end{equation}
and
\begin{equation}\label{I_fn}
 I~:~ (t, \alpha, \lambda) \in \bT \times \cA \times \R_{+} \to I_{\mathbf{t}}(\alpha, \lambda). 
\end{equation}  
Furthermore, let
\begin{equation}\label{defH}
 H_{\bt} (\lambda) := \bigcup_{\alpha \in \cA} H_{\mathbf{t}}(\alpha, \lambda) \text{ and }  I_{\bt} (\lambda) := \bigcup_{\alpha \in \cA} I_{\mathbf{t}}(\alpha, \lambda).
\end{equation}
Let $\Psi$ denote a set of functions $\psi: \bT \to \R_{+}~:~\bt \to \psi_{\bt}$. For $\psi \in \Psi$, consider the limsup sets
\begin{equation}\label{deflambda}
\Lambda_{H}(\psi) = \limsup_{\bt \in \bT} H_{\bt}(\psi_{\bt}) \text{ and } \Lambda_{I}(\psi) = \limsup_{\bt \in \bT} I_{\bt}(\psi_{\bt}).
\end{equation}
The sets associated with the map $H$ will be called homogeneous sets and those associated with the map $I$, inhomogeneous sets. We now come to two important properties connecting these notions.

\subsection*{The intersection property} The triple $(H, I, \Psi)$ is said to satisfy the intersection property if, for any $\psi \in \Psi$, there exists $\psi^{*} \in \Psi$ such that, for all but finitely many $\bt \in \bT$ and all distinct $\alpha$ and $\alpha'$ in $\cA$, we have that
\begin{equation}\label{inter}
I_{\bt}(\alpha, \psi_{\bt}) \cap  I_{\bt}(\alpha', \psi_{\bt}) \subset H_{\bt}(\psi^{*}_{\bt}).
\end{equation}

\subsection*{The contraction property}  Let $\mu$ be a non-atomic finite doubling measure supported
on a bounded subset $\mathbf{S}$ of $\Omega$. We recall that $\mu$ is doubling if there is a constant $\lambda > 1$ such that, for any ball $B$ with centre in $\bS$, we have
$$\mu(2B) \leq \lambda \mu(B),$$
where, for a ball $B$ of radius $r$, we denote by $cB$ the ball with the same centre and radius $cr$. 
We say that $\mu$ is contracting with respect to $(I, \Psi)$ if, for any
$\psi \in \Psi$, there exists $\psi^{+}\in \Psi$ and a sequence of positive numbers $\{k_{\bt}\}_{\bt \in \bT}$ satisfying
\begin{equation}\label{conv}
\sum_{\bt \in \bT}k_{\bt} < \infty,
\end{equation}
such that, for all but finitely $\bt \in \bT$ and all $\alpha \in \cA$, there exists a collection $C_{\bt, \alpha}$ of balls $B$
centred at $\mathbf{S}$ satisfying the following conditions:
\begin{equation}\label{inter1}
\bS \cap I_{\bt}(\alpha, \psi_{\bt}) \subset \bigcup_{B \in C_{\bt, \alpha}} B
\end{equation}
\begin{equation}\label{inter2}
\bS \cap \bigcup_{B \in C_{\bt, \alpha}} B \subset  I_{\bt}(\alpha, \psi^{+}_{\bt})
\end{equation}
and
\begin{equation}\label{inter3}
\mu(5B \cap  I_{\bt}(\alpha, \psi_{\bt})) \leq k_{\bt} \mu(5B).
\end{equation}
We are now in a position to state Theorem $5$ from \cite{BeVe} 
\begin{theorem}\label{transfer}
Suppose that $(H, I, \Psi)$ satisfies the
intersection property and that $\mu$ is contracting with respect to $(I, \Psi)$. Then
\begin{equation}\label{eq:transfer1}
\mu(\Lambda_{H}(\psi))=0 ~\forall~\psi \in \Psi  \Rightarrow \mu(\Lambda_{I}(\psi)) = 0 ~\forall~\psi \in \Psi.
\end{equation}
\end{theorem}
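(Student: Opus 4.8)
The plan is to run the standard two-step mechanism behind an inhomogeneous transference result: first peel off, from $\bS\cap\Lambda_I(\psi)$, the locus where resonance overlaps recur infinitely often and absorb it into a homogeneous limsup set that is null by hypothesis; then dispose of the complementary locus by a covering argument fed into a convergence Borel--Cantelli estimate, the summability coming from the contraction data. Throughout, fix $\psi\in\Psi$; since $\mu$ is supported on $\bS$ it suffices to show $\mu(\bS\cap\Lambda_I(\psi))=0$.

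\textbf{Step 1 (reduction via the intersection property).} Let $\psi^{*}\in\Psi$ be supplied by the intersection property. Applying the standing hypothesis to $\psi^{*}$ gives $\mu(\Lambda_H(\psi^{*}))=0$, so it is enough to bound $W:=(\bS\cap\Lambda_I(\psi))\setminus\Lambda_H(\psi^{*})$. If $x\in W$ then $x\notin H_{\bt}(\psi^{*}_{\bt})$ for all but finitely many $\bt$, so by \eqref{inter} for all but finitely many $\bt$ the point $x$ cannot belong to two distinct sets $I_{\bt}(\alpha,\psi_{\bt})$; on the other hand $x\in I_{\bt}(\psi_{\bt})$ for infinitely many $\bt$. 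Setting $J^{\alpha}_{\bt}:=(\bS\cap I_{\bt}(\alpha,\psi_{\bt}))\setminus\bigcup_{\alpha'\neq\alpha}I_{\bt}(\alpha',\psi_{\bt})$ and $S_{\bt}:=\bigsqcup_{\alpha\in\cA}J^{\alpha}_{\bt}$ (a genuinely disjoint union), we conclude $W\subset\limsup_{\bt}S_{\bt}$.

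\textbf{Step 2 (the contraction estimate).} Let $\psi^{+}\in\Psi$ and $\{k_{\bt}\}$ be the contraction data. Fix a large $\bt$ and $\alpha\in\cA$. By \eqref{inter1} the set $J^{\alpha}_{\bt}$ is covered by the balls of $C_{\bt,\alpha}$; by the $5r$-covering lemma extract a pairwise disjoint subfamily $\{B_i\}\subset C_{\bt,\alpha}$ with $J^{\alpha}_{\bt}\subset\bigcup_i 5B_i$. Using $J^{\alpha}_{\bt}\subset I_{\bt}(\alpha,\psi_{\bt})$, the density bound \eqref{inter3} on each $5B_i$, the doubling property of $\mu$ (to replace $\mu(5B_i)$ by a bounded multiple of $\mu(B_i)$), disjointness of the $B_i$, and \eqref{inter2} (which confines $\bigsqcup_i B_i\cap\bS$ to $I_{\bt}(\alpha,\psi^{+}_{\bt})$), one arrives at an estimate of the shape $\mu(J^{\alpha}_{\bt})\ll k_{\bt}\,\mu(\bS\cap I_{\bt}(\alpha,\psi^{+}_{\bt}))$, with implied constant depending only on the doubling constant of $\mu$. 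To add up the disjoint pieces $J^{\alpha}_{\bt}$ without the collective mass $\sum_{\alpha}\mu(I_{\bt}(\alpha,\psi^{+}_{\bt}))$ getting out of control, I would argue locally at a Lebesgue density point of $W$: assuming $\mu(W)>0$, choose a density point $x_{0}$ and a ball $B_{0}$ with $\mu(W\cap B_{0})\geq\tfrac{1}{2}\mu(B_{0})$, and rerun the covering keeping only balls meeting $B_{0}$; for $\bt$ large these may be taken small enough to lie in $2B_{0}$, so $\sum_i\mu(B_i)\ll\mu(B_{0})$ and hence $\mu(S_{\bt}\cap B_{0})\ll k_{\bt}\,\mu(B_{0})$. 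Then $\mu(W\cap B_{0})\leq\sum_{|\bt|\geq N}\mu(S_{\bt}\cap B_{0})\ll\mu(B_{0})\sum_{|\bt|\geq N}k_{\bt}$, which tends to $0$ as $N\to\infty$ by \eqref{conv}; thus $\mu(W\cap B_{0})=0$, contradicting the choice of $B_{0}$, so $\mu(W)=0$ and \eqref{eq:transfer1} follows.

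\textbf{Expected main obstacle.} Step 1 is essentially formal. The real work is Step 2: one must use the single-overlap conclusion of Step 1 so that the covering families for distinct $\alpha$ are not overcounted, and one must thread the doubling constant through the $5r$-covering lemma so that the crude per-ball inequality \eqref{inter3} upgrades to a clean, $\{k_{\bt}\}$-summable bound for $\mu(S_{\bt})$ (or for $\mu(S_{\bt}\cap B_{0})$). The localization at a density point is the device that neutralizes the possibly unbounded total $\psi^{+}$-mass at each level $\bt$; making it rigorous rests on the Lebesgue density theorem for the doubling measure $\mu$ on the locally compact metric space $\Omega$, and this is the one analytic point that needs care.
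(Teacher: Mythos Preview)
The paper does not give a proof of this statement; it is quoted as Theorem~5 of Beresnevich--Velani \cite{BeVe} and applied as a black box in Section~4. There is therefore no in-paper argument to compare your proposal against.

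That said, your two-step plan matches the strategy of the original proof in \cite{BeVe}: Step~1 (using \eqref{inter} to reduce to a limsup of the ``single-$\alpha$'' pieces $S_{\bt}=\bigsqcup_{\alpha}J^{\alpha}_{\bt}$) is correct as written. The genuine concern is in Step~2. Your localization at a density point hinges on the assertion that ``for $\bt$ large these [balls] may be taken small enough to lie in $2B_{0}$,'' but the contraction hypotheses \eqref{inter1}--\eqref{inter3} impose no constraint whatsoever on the radii of the balls in $C_{\bt,\alpha}$, so there is no reason they shrink as $|\bt|\to\infty$. Without that, restricting to a small ball $B_{0}$ does not give $\sum_{i}\mu(B_{i})\ll\mu(B_{0})$, and the contradiction at the density point does not go through.

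In \cite{BeVe} the overcounting across $\alpha$ is handled globally rather than by density-point localization: condition \eqref{inter2} forces the disjoint balls selected from $C_{\bt,\alpha}$ to lie, on $\bS$, inside $I_{\bt}(\alpha,\psi^{+}_{\bt})$, so that the families for distinct $\alpha$ are disjoint modulo a homogeneous set (via a second application of the intersection property, now at level $\psi^{+}$). This yields $\mu(S_{\bt})\ll k_{\bt}\,\mu(\bS)$ directly, and Borel--Cantelli finishes. Your diagnosis of the obstacle is accurate; the remedy is a further use of the intersection property rather than the Lebesgue density theorem.
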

 
  \subsection{Verifying the intersection property:}
  
 Let $\bt\in\bT$ with $|\bt|> \frac{l}{1-\frac{\varepsilon}{2}}$. We have to show that for $\phi_\delta$ there exists $\phi_\delta^*$ such that for all but finitely many $\bt\in \bT$ and all distinct $\alpha=(a_0,\ba),\alpha'=(a_0',\ba_0')\in\mathcal{A},$ we have that $I_\bt^\nu(\alpha,\phi_\delta(\bt))\cap I_\bt^\nu(\alpha',\phi_\delta(\bt))\subset H_\bt^\nu(\phi_\delta^*(\bt))$. Consider
  $$\bx\in I_\bt^\nu(\alpha,\phi_\delta(\bt))\cap I_\bt^\nu(\alpha',\phi_\delta(\bt)),$$
  then by Definition (\ref{def_I}) we have 
  \begin{equation}\label{eqn_1}\left\{\begin{array}{l}
  |a_0+\ba.\f(\bx)+\Theta(\bx)|_S<{(\phi_\delta(\bt)\Psi_0(2^\bt))}^\frac{1}{l}\\
  \|\nabla(\ba.\f_{\nu}(\bx_\nu)+\Theta_\nu(\bx_\nu))\|<\phi_\delta(\bt) r_\nu(\bt)\end{array}\right.
  \end{equation} and 
   \begin{equation}\label{eqn_2}\left\{\begin{array}{l}
  |a'_0+\ba'.\f(\bx)+\Theta(\bx)|_S<{(\phi_\delta(\bt)\Psi_0(2^\bt))}^\frac{1}{l}\\
  \|\nabla(\ba'.\f_{\nu}(\bx_\nu)+\Theta_\nu(\bx_\nu))\|<\phi_\delta(\bt) r_\nu(\bt)\end{array}\right.
  \end{equation} where 
  $$|a_i|<2^{t_i+1}\text{ for }1\leq i\leq n \text{ and } |a_i'|<2^{t_i+1}\text{ for } 1\leq i\leq n.$$
Now subtracting the respective equations of (\ref{eqn_2}) from (\ref{eqn_1}) we have  $\alpha''=(a_0-a_0',\ba-\ba')$ satisfying the following equations 
\begin{equation}\label{eqn_3}
\left\{\begin{array}{l}
|a''_0+\ba''.\f(\bx)|_S^l<2^l\phi_\delta(\bt)\Psi_0(2^\bt)\\
\|\nabla(\ba''.\f_{\nu}(\bx_\nu))\|<2\phi_\delta(\bt) r_\nu(\bt)\\
|a''_i|\leq 2^{t_i+2} \ \forall\  1\leq i \leq n .
\end{array} \right.
\end{equation} Observe that $\ba''\neq\mathbf{0}$, because otherwise $$1\leq|a_0''|^l<2^l\phi_\delta(\bt)\Psi_0(2^\bt)<2^l.2^{-{(1-\frac{\varepsilon}{2})}|\bt|},$$ which implies that $|\bt|\leq\frac{l}{1-\frac{\varepsilon}{2}}$, which is true for the finitely many $\bt$'s that we are avoiding. Therefore $\alpha''\in\mathcal{A} $ and $\bx\in H_\bt^\nu(\alpha'',\phi_\delta(\bt))$. So here the particular choice of $\phi_\delta^*$ is $\phi_\delta$ itself. This verifies the intersection property.

\subsection{Verifying the Contraction Property :}

Recall that to verify the contraction property we need to verify the following: for any $\phi_\delta\in \Phi $ we need to find $\Phi_\delta^+\in \Phi$ and a sequence of positive numbers $\{k_{\bt}\}_{\bt\in\bT}$ satisfying $$
\sum_{\bt\in\bT}k_{\bt}<\infty $$
such that for all but finitely many $\bt\in\bT$ and all $\alpha\in\mathcal{A},$ there exists a collection $C_{\bt,\alpha}$ of ball $B$ centred at a point in $\mathbf{S}=\bV=\overbar{\bV}$
satisfying (\ref{inter1}), (\ref{inter2}) and (\ref{inter3}).\\
\noindent
Let us consider the open set $5\bV_{\bx_0}$ in Corollary \ref{good_function}. So we have that 
for any $\bt\in \bT $ and $\alpha=(a_0,\ba)\in \mathcal{A}$ 
\begin{equation}
\mathbf{F}^\nu_{\bt,\alpha}(\bx) :~=\max\{\Psi_0\inv(2^\bt)r_\nu(\bt)|a_0+\ba.\f(\bx)+\Theta(\bx)|_S^l,\|\nabla(\ba.\f_{\nu}+\Theta_\nu)(\bx_\nu)\|\}
\end{equation} 
is $(C,\frac{1}{dk})$-good on $5\bV_{\bx_0}$ for some $C>0,k\in\N$ and $d=\max d_\nu$. Using this new function $\mathbf{F}^\nu_{\bt,\alpha},$ we can write the previous inhomogeneous sets as following :\begin{equation}
I_\bt^\nu(\alpha,\phi_\delta(\bt))=\left\{\bx\in\bU:\begin{array}{l}
\mathbf{F}^\nu_{\bt,\alpha}(\bx)<\phi_\delta(\bt)r_\nu(\bt)\\
\\
2^{t_i}\leq\max\{1,|a_i|\}<2^{t_i+1} ~~\forall~ 1\leq i\leq n 
\end{array}\right\}.\end{equation}\label{inhom_new}
We also note that 
$$ I_\bt^\nu(\alpha,\phi_\delta(\bt))\subset I_\bt^\nu(\alpha,\phi^+_\delta(\bt))$$
where $\phi_\delta^+(\bt)=\phi_{\frac{\delta}{2}+\frac{\varepsilon}{4}} (\bt)\geq \phi_\delta(\bt) ~\forall~ \bt\in\bT$. And $\phi_\delta^+(\bt)=\phi_{\frac{\delta}{2}+\frac{\varepsilon}{4}}(\bt)\in\Phi $ because $\frac{\delta}{2}+\frac{\varepsilon}{4}<\frac{\varepsilon}{2} .$
If $I_\bt^\nu(\alpha,\phi_\delta(\bt))=\emptyset$ then it is trivial. So without loss of generality we can assume that $ I_\bt^\nu(\alpha,\phi_\delta(\bt)) \ne \emptyset $. 
Because for every $\phi_\delta \in \Phi $ , $\phi_\delta(\bt) \Psi_0(2^\bt)<2^{-(1-\frac{\varepsilon}{2})|\bt|}$, so in particular \begin{equation}
I_\bt^\nu(\alpha,\phi_\delta^+(\bt))\subset \{\bx\in \bU ~:~|a_0+\ba.\f(\bx)+\Theta(\bx)|_S^l<2^{-(1-\frac{\varepsilon}{2})|\bt|}\}.
\end{equation}
We recall Corollary 4 of \cite{BaBeVe} ,
$$
\inf_{\substack{(\ba, a_0) \in\R^{n+1}\setminus\{0\} \\ \|\ba\| \geq H_0}}\sup_{\bx\in5\bV_{\bx_0}}|a_0+\ba.\f_\infty(\bx_\infty)+\Theta_\infty(\bx_\infty)|_\infty>0.$$ 
Therefore, 
$$\inf_{\substack{(\ba, a_0)\in\R^{n+1}\setminus\{0\} \\  \|\ba\|\geq H_0 }}\sup_{\bx\in5\bV_{\bx_0}}|a_0+\ba.\f(\bx)+\Theta(\bx)|_S > $$ 
$$\inf_{\substack{(\ba, a_0)\in\R^{n+1}\setminus\{0\} \\ \|\ba\| \geq H_0}}\sup_{\bx\in5\bV_{\bx_0}}|a_0+\ba.\f_\infty(\bx_\infty)+\Theta_\infty(\bx_\infty)|_\infty
> 0.$$
 Now by the $(C,\frac{1}{dk})$-good property of the function $|a_0+\ba.\f(\bx)+\Theta(\bx)|_S^l$ on $5\bV_{\bx_0}$ we conclude 
 $$|I_\bt^\nu(\alpha,\phi_\delta^+(\bt))\cap\bV_{\bx_0}|\leq |\{\bx\in \bV_{\bx_0} ~:~|a_0+\ba.\f(\bx)+\Theta(\bx)|_S^l<2^{-(1-\frac{\varepsilon}{2})|\bt|}\}|$$ $$ \ll2^{-(1-\frac{\varepsilon}{2})(\frac{1}{dk})|\bt|}|\bV_{\bx_0}|$$
for all sufficiently large $|\bt|.$ Therefore $\bV_{\bx_0}\not\subset I_{\bt}^\nu(\alpha,\phi^+_\delta(\bt))$ for sufficiently large $|\bt|$ . The measure restricted to $\bV_{\bx_0}$ will be denoted as $|~~|_{\bV_{\bx_0}}$ and thus $\mathbf{S}=\overbar{\bV_{\bx_0}}$. So $\mathbf{S}\cap I_{\bt}^\nu(\alpha,\phi^+_\delta\bt) $ is open and for every $\bx\in \mathbf{S}\cap I_{\bt}^\nu(\alpha,\phi_\delta(\bt) $ there exists a ball $B'(\bx)\subset I_{\bt}^\nu(\alpha,\phi^+_\delta(\bt)).$ 
 So we can find $\kappa\geq 1$ such that the ball $B=B(\bx):=\kappa B'(\bx)$ satisfies $5 B(\bx)\subset 5V_{\bx_0}$ and
 \begin{equation}\label{twosided_inclusion}
 B(\bx)\cap \mathbf{S}\subset I_{\bt}^\nu(\alpha,\phi^+_{\delta}(\bt))\not\supset \bS\cap 5B(\bx)
 \end{equation}
holds for all but finitely many $\bt$ . The second inequality holds because we would otherwise have $\bV_{\bx_0}\subset I_{\bt}^\nu(\alpha,\phi^+_{\delta}(\bt))$, a contradiction. Then take $C_{\bt,\alpha}:=\{B(\bx)~:~ \bx\in \mathbf{S}\cap I_{\bt}^\nu(\alpha,\phi_{\delta}(\bt))\} $. Hence (\ref{inter1}) and (\ref{inter2}) are satisfied. By (\ref{twosided_inclusion}) we have 
\begin{equation}\label{ineq_1}
\sup_{\bx\in 5B}\mathbf{F}_{\bt,\alpha}^\nu(\bx)\geq \sup_{\bx\in 5B\cap S} \mathbf{F}_{\bt,\alpha}^\nu(\bx)\geq \phi_\delta^+(\bt)r_\nu(\bt)
\end{equation} for all but finitely many $\bt$. So in view of the definitions we get 
\begin{equation}\label{ineq_2}
\sup_{\bx\in 5B\cap  I_{\bt}^\nu(\alpha,\phi_{\delta}(\bt)) }\mathbf{F}_{\bt,\alpha}^\nu(\bx)\leq 2^{(\frac{\delta}{2}-\frac{\varepsilon}{4})|\bt| }\phi_\delta^+(\bt)r_\nu(\bt)\leq_{\ref{ineq_1}}2^{(\frac{\delta}{2}-\frac{\varepsilon}{4})|\bt|}\sup_{\bx\in 5B}\mathbf{F}_{\bt,\alpha}^\nu(\bx).
\end{equation}
Therefore for all large $|\bt|$ and $\alpha \in \Z^{n+1}$ we have \begin{equation}\begin{split}
|5B\cap I_{\bt}^\nu(\alpha,\phi_{\delta}(\bt))|\leq_{\ref{ineq_2}} &
|\{ \bx\in 5B~:~\mathbf{F}_{\bt,\alpha}^\nu(\bx)\leq
 2^{(\frac{\delta}{2}-\frac{\varepsilon}{4})|\bt|}
 \sup_{\bx\in 5B}\mathbf{F}_{\bt,\alpha}^\nu(\bx) \} |\\ &\leq  C2^{(\frac{\delta}{2}-\frac{\varepsilon}{4})\frac{1}{dk}|\bt|}|5B|.\end{split}
\end{equation}
Hence finally we conclude \begin{equation}
\begin{split}
|5B\cap I_{\bt}^\nu(\alpha,\phi_{\delta}(\bt))|_{\bV}\leq|5B\cap I_{\bt}^\nu(\alpha,\phi_{\delta}(\bt))|
&
\\\leq C2^{(\frac{\delta}{2}-\frac{\varepsilon}{4})\frac{1}{dk}|\bt|}|5B|&\\ \leq
C_\star C2^{(\frac{\delta}{2}-\frac{\varepsilon}{4})\frac{1}{dk}|\bt|}|5B|_{\bV_{\bx_0}},
\end{split}
\end{equation} since $5B\subset5\bV_{\bx_0}$.  Here we are using that the measure is doubling and the centre of the ball $5B$ is in $\overbar{\bV_{\bx_0}}$. So $C_\star$ is only dependent on $d_\nu$. We choose $k_{\bt}=C_\star C2^{(\frac{\delta}{2}-\frac{\varepsilon}{4})\frac{1}{dk}|\bt|}$ and as $(\frac{\delta}{2}-\frac{\varepsilon}{4})<0$ we also have $\sum k_{\bt}<\infty$ as required in (\ref{conv}). This verifies the contracting property.

\subsection{The large derivative}
In this section, we will show that $|\cW_{\f}^{\text{large}}(\Psi,\Theta)|=0$. Let us recall Theorem 1.2 from \cite{MoS1}.
\begin{theorem}
	Assume that $\mathbf{U}$ satisfies (I1), $\mathbf{f}$ satisfies (I2), (I3)  and $0<\epsilon< \frac{1}{4n|S|^2}.$  Let $\mathcal{A}$ be
	\begin{equation}\left\{\bx\in\bU|~\exists~\ba \in\Z^n, \frac{T_i}{2}\leq~|a_i|<T_i,
	\begin{array}{l}|\langle \ba . \f(\bx) \rangle|^{l}<\delta(\prod_{i} T_i)^{-1}\\
	\|\ba . \nabla \f_{\nu}(\bx_\nu)\|_{\nu}>\|\ba\|^{-\varepsilon},\hspace{2mm}\nu\neq\infty\\
	\|\ba . \nabla \f_{\nu}(\bx_\nu)\|_{\nu}>\|\ba\|^{1-\varepsilon},\hspace{2mm}\nu=\infty
	\end{array}
	\right\}.
	\end{equation} Then
	$|\mathcal{A}|<C \delta\hspace{1mm}|\bU|,$ for large enough $\max (T_i)$ and a universal constant $C$. 
\end{theorem}
 Note that the function $(\f,\Theta):\bU~\mapsto \Q_S^{n+1}$ satisfies the same properties as $\f$. So as a Corollary of the previous theorem we get,
 \begin{corollary}\label{>coro} Let $0<\varepsilon< \frac{1}{4(n+1)|S|^2}$ and $\mathcal{A}_{(T_i)_{1}^n}$ be the set
 	\begin{equation}
 	\bigcup_{\substack{(\ba,1)\in\Z^{n+1}\\\frac{T_i}{2}\leq~|a_i|_{S}<T_i}}\left\{\bx\in\bU~|\\
 	\begin{array}{l}|\langle \ba . \f(\bx)+\Theta(\bx) \rangle|_S^{l}<\delta(\prod_{i=1}^{n} T_i)^{-1}\\
 		\|\nabla( \ba\f_{\nu}(x)+\Theta_\nu(\bx_{\nu}))\|_{\nu}>\|\ba\|^{-\varepsilon},\hspace{2mm}\nu\neq\infty\\
 		\|\nabla (\ba\f_{\nu}(x_\nu)+\Theta_\nu(\bx_{\nu}))\|_{\nu}>\|\ba\|^{1-\varepsilon},\nu=\infty
 	\end{array}
 	\right\}.
 \end{equation} Then
$|\mathcal{A}_{(T_i)_{1}^n}
|<C \delta\hspace{1mm}|\bU|,$ for large enough $\max (T_i)$ and a universal constant $C$. 
 	\end{corollary}
 Now take $T_i=2^{t_i+1}$ and $\delta=2^{\sum_{1}^n t_i+1}\Psi(2^{\bt})$. As $2^{t_i}\leq|a_i| <2^{t_i+1},$ this implies by (\ref{monotone_cond}) that $\Psi(\ba)\geq \Psi(2^{\bt+1})$ and we have using (\ref{>coro}) that 
 \begin{equation}\label{>measure_inq}
 |\bigcup_{2^{t_i}\leq|a_i|_S<2^{t_i+1}}\bW_{\f}^{\text{large}}(\ba,\Psi,\Theta)|<C2^{\sum_{1}^n t_i+1}\Psi(2^{\bt}).
 \end{equation}
 Note that $$\sum\Psi(\ba)\geq\sum\Psi(2^{t_1+1},\cdots,2^{t_n+1})2^{\sum_{1}^n t_i},$$ so the convergence of $\sum\Psi(\ba)$ implies the convergence of the later. Therefore by (\ref{>measure_inq}) and by the Borel-Cantelli lemma we get that almost every point of $\bU$ are in at most finitely many $\bW_{\f}^{\text{large}}(\ba,\Psi,\Theta)$. Hence $|\cW_{\f}^{\text{large}}(\Psi,\Theta)|=0$ completing the proof.

\end{document}